\newtheorem{theo}{Theorem}[section]
\newtheorem{defi}[theo]{Definition}
\newtheorem{lem}[theo]{Lemma}
\newtheorem{prop}[theo]{Proposition}
\newtheorem{cor}[theo]{Corollary}
\newtheorem{rem}[theo]{Remark}
\newtheorem*{TheoA}{Theorem A}
\newtheorem*{TheoB}{Theorem B}
\newtheorem*{TheoC}{Theorem C}
\numberwithin{equation}{section}
\newcommand{\R}{\mathbb{R}} 
\newcommand{\C}{\mathbb{C}} 
\newcommand{\Z}{\mathbb{Z}} 
\newcommand{\N}{\mathbb{N}} 
\newcommand{\RR}{\mathfrak{S}}         
\newcommand{\ZZ}{\Lambda}              
\newcommand{\ZZdual}{\ZZ^\bot}         
\newcommand{\id}{\mathrm{e}}           
\newcommand{\card}{{|\textnormal{G}|}} 
\newcommand{\T}{\mathcal{T}}           
\newcommand{\TG}{\mathfrak{T}}         
\newcommand{\G}{\mathscr{G}}           
\newcommand{\TT}{{K}}                  
\newcommand{\J}{\mathcal{J}}           
\newcommand{\K}{\mathcal{K}}           
\newcommand{\pj}{\mathcal{P}}          
\newcommand{\A}{\Phi}                  
\newcommand{\V}{\mathcal{V}}           
\newcommand{\W}{\mathcal{W}}           
\newcommand{\AG}{{\A^{\textsc{g}}}}    
\newcommand{\len}{\mathcal{L}}         
\newcommand{\B}{\mathcal{B}}   
\newcommand{\U}{\mathcal{U}}   
\newcommand{\F}{\mathcal{F}}   
\newcommand{\Proj}{\mathbb{P}} 
\newcommand{\Hil}{\mathcal{H}} 
\newcommand{\vsp}{\textnormal{span}}
\newcommand{\ol}[1]{\overline{#1}}
\newcommand{\wt}[1]{\widetilde{#1}}
\newcommand{\wh}[1]{\widehat{#1}}
\newcommand{\one}[1]{\mathlarger{\chi}_{\raisebox{-.5ex}{$\mathsmaller{#1}$}}}
\newcommand{\tr}[1]{T_{#1}}
\newcommand{\rot}[1]{R_{#1}}
\newcommand{\data}{\mathscr{F}}        
\newcommand{\DG}{{\data^{\textsc{g}}}} 
\newcommand{\E}{\mathscr{E}}           
\title{Approximation by group invariant subspaces}
\author[D. Barbieri]{Davide Barbieri}\email{davide.barbieri@uam.es}
\author[C. Cabrelli]{Carlos Cabrelli}\email{cabrelli@dm.uba.ar}
\author[E. Hern\'andez]{Eugenio Hern\'andez}\email{eugenio.hernandez@uam.es}
\author[U. Molter]{Ursula Molter}\email{umolter@dm.uba.ar}
\address[D. Barbieri and E. Hern\'andez]{Departamento de Matem\'aticas, Universidad Aut\'onoma de Madrid, 28049, Madrid, Spain}
\address[C. Cabrelli and U. Molter]{Departamento de Matem\'atica, Universidad de Buenos Aires, and Instituto de Matem\'atica ``Luis Santal\'o'' (IMAS-CONICET-UBA), 1428 Buenos Aires, Argentina}
\date{\today}
\subjclass[2010]{41A65, 47A15, 43A70, 20H15} 
\keywords{Invariant subspaces; data approximation; Parseval frames; optimal subspaces}
\begin{document}

\allowdisplaybreaks[2]

\begin{abstract}
In this article we study the structure   of $\Gamma$-invariant spaces of $L^2(\RR)$. Here $\RR$ is a second countable  LCA group.  The invariance is with respect to the action of $\Gamma,$ a non commutative group in the form of a semidirect product of a discrete cocompact subgroup of $\RR$ and a group of automorphisms. This class includes in particular most of the crystallographic groups.
We obtain a complete characterization of $\Gamma$-invariant subspaces in terms of range functions associated to shift-invariant spaces. We also define a new notion of range function adapted to the $\Gamma$-invariance and construct Parseval frames of orbits of some elements in the subspace, under the group action.
These results are then applied to prove the existence and construction of a $\Gamma$-invariant subspace that best approximates a set of functional data in $L^2(\RR)$.
This is very relevant in applications since in the euclidean case, $\Gamma$-invariant subspaces are invariant under rigid movements, a very sought feature in models for signal processing.

\

{\scshape R\'esum\'e}. Dans cet article nous \'etudions la structure des espaces 
$\Gamma$-invariants de $L^2(\RR)$, o\`u $\RR$ est un groupe ab\'elien 
localement compact \`a base denombrable. L'invariance est consider\'ee par rapport \`a l'action de $\Gamma$, un 
groupe non commutatif qui est le produit semi-direct d'un sous-groupe 
co-compact de $\RR$ et d'un groupe d'automorphismes.
Cette classe comprend notamment la plupart des groupes crystallographiques.
Nous obtenons une caract\'erisation compl\`ete des sous-espaces 
$\Gamma$-invariants en termes de fonctions rang associ\'ees aux espaces 
invariants par translations.
Nous d\'efinissons \'egalement une nouvelle notion de fonction rang 
adapt\'ee \`a la $\Gamma$-invariance et construisons des 
frames de Parseval \`a partir des orbites de certains \'el\'ements du sous-espace, sous l'action du groupe.
Ces r\'esultats sont ensuite appliqu\'es pour prouver l'existence et la 
construction d'un sous-espace $\Gamma$-invariant qui donne la meilleure 
approximation d'un ensemble de donn\'ees fonctionnelles dans $L^2(\RR)$.
Ceci est tr\`es pertinent pour les applications car, dans le cas 
Euclidien, les espaces  consid\'er\'es sont invariants sous l'action des mouvements rigides, une caract\'eristique tr\`es recherch\'ee dans les mod\`eles de traitement du signal.
\end{abstract}

\maketitle

\section{Introduction}

Shift-invariant spaces are good models for signals and images and have been used in many applications. They are the core spaces for multiresolution analysis and as such they are used in image compression \cite{MallatBook}. Also, they have been used in the theory of approximation \cite{dBDR94, ACHM2007}. They are subspaces invariant under translations along a lattice.

When dealing with large amounts of functional data, a common practice in applications is to assume some hypotheses on the data to use the model at hand. However it seems more natural, given a data set, to try to find a subspace that best fits the data. One can assume that our data are low dimensional in nature, that is, they belong to a ``small'' shift invariant space,  however due to possible perturbation or noise they become high dimensional.

Recently there were positive results in finding a solution to this problem (i.e. proving the existence of optimal shift invariant spaces for a given data), see \cite{ACHM2007}, \cite{CM2016}.
In  these works the class of subspaces over which the optimization is done are just translation invariant and lack additional invariances, such as rotational, that are crucial in applications to image analysis \cite{JJBOD2018} and mathematical modeling of vision \cite{CS2006}, and provide relevant geometric structures for pattern recognition and classification with neural networks \cite{SLCDV98, Mallat2012, AM2013, BLVEPD}.

In this note we consider this case in greater generality. We study the approximation problem for subspaces that are invariant under the action of a discrete locally compact group $\Gamma$, not necessarily commutative, with some hypotheses. This class in particular includes the crystallographic groups that split (see definition \ref{split}). So our spaces become invariant under rigid movements, which is a desirable property in applications. One recent application of the present results to datasets of digital images appeared in \cite{BCHMSPIE2019}. This approach turns out to be mathematically very challenging and requires many different techniques such as fiberization, gramian analysis, frame theory and group representation methods. 

To obtain our results we first need to study in depth the structure of $\Gamma$-invariant subspaces. In this direction, we obtain a characterization in terms of range functions associated to shift invariant spaces. We further introduce the notion of $\Gamma$-invariant range function, and obtain another characterization of $\Gamma$-invariant subspaces. We also prove that these $\Gamma$-invariant spaces always have a tight frame formed by orbits of some generators under the group action.

Our main result is the existence, for any given set of data, of a $\Gamma$-invariant subspace that minimizes a certain functional. We also obtain the exact value of the error in the approximation  and a formula for a set of generators of the best approximation subspace.

We describe now precisely our setting and the main results of this paper.
Let $\RR$ be a second countable locally compact abelian (LCA) group, such as $\mathbb R^d$, $\mathbb Z^d$ or $\mathbb{T}^d.$ We are interested in studying properties of the subspaces of $L^2(\RR)$ that are invariant under the action of a non commutative group that takes the form of a semidirect product $\Gamma = \ZZ \rtimes G $. Here $\ZZ$ is a discrete cocompact subgroup of $\RR$ and $G$ is a discrete and countable group that acts on $\RR$ by continuous automorphisms preserving $\ZZ$ (see Section \ref{Sec:preliminaries} for more details).

For $f\in L^2(\RR), k\in \ZZ, g\in G$ and $x\in \RR$ let $\tr{k} f(x)=f(x-k)$ and $ \rot{g}f(x)=f(g^{-1}x)$ be unitary representations of $\ZZ$ and $G$ respectively in $L^2(\RR).$ It can be proved that $(k,g) \mapsto \tr{k} \rot{g}$ is a unitary representation of $\Gamma= \ZZ \rtimes G $ in  $L^2(\RR).$

Important examples of $\Gamma$-invariant subspaces (that is, subspaces of $L^2(\RR)$ invariant under the action of the unitary representation $\tr{k}\rot{g}$) are the ones generated by a finite family $\A = \{\varphi_i\}_{i = 1}^n$ of elements of $L^2(\RR)$. Such spaces are defined as
$$
S_\Gamma(\A) := \ol{\vsp}\{\tr{k}\rot{g}\varphi : k \in \ZZ, g \in G, \varphi \in \A\}.
$$
Notice that $S_\Gamma(\A)$ is also $\ZZ$-invariant (or shift invariant), that is, invariant under the action of $T_k, k\in \ZZ.$
For these spaces, it is possible to construct a family of generators with a special desirable property.

\begin{TheoA}
For any $\Gamma$-invariant space generated by a finite family $\A$, there exists another finite family $\Psi$, with the same cardinality, which generates the same $\Gamma$-invariant space and whose $\Gamma$ orbits $\{\tr{k}\rot{g}\psi : k \in \ZZ, g \in G, \psi \in \Psi\}$ form a Parseval frame of $S_\Gamma(\Phi)$.
\end{TheoA}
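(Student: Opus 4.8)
The strategy is to reduce the $\Gamma$-invariance question to a shift-invariance question, apply the known theory of shift-invariant spaces to produce a Parseval frame of translates, and then ``symmetrize'' the resulting generators so that the full $\Gamma$-orbit (not just the $\ZZ$-orbit) is Parseval. First I would observe that $S_\Gamma(\A)$ is in particular a $\ZZ$-invariant (shift-invariant) space, finitely generated by the finite family $\{\rot{g}\varphi_i : g\in G,\ \varphi_i\in\A\}$ \emph{if $G$ were finite}; in the general countable case one uses that $S_\Gamma(\A)$ is still a shift-invariant space of at most countably many generators, and, crucially, that it has finite \emph{length} (the essential supremum of the dimension of its fibers is bounded by $n=|\A|$). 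This last point is the conceptual heart: although the generating set $\{\rot{g}\varphi\}$ can be infinite, the action of $G$ moves a single fiber of $S_\ZZ(\A)$ around but the pointwise dimension of the Gramian of $S_\Gamma(\A)$ cannot exceed $n$, because each $\rot{g}$ permutes (a measurable field of) $n$-dimensional fiber spaces. I would make this precise through the fiberization/range-function machinery: the range function $J_\Gamma$ of $S_\Gamma(\A)$ satisfies $\dim J_\Gamma(\xi)\le n$ for a.e.\ $\xi$ in the dual.

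Once finite length $\le n$ is established, I would invoke the standard construction (Bownik, or the Helson range-function theory) that produces, for any shift-invariant space of length $n$, a Parseval frame of the form $\{\tr{k}\psi_i : k\in\ZZ,\ i=1,\dots,n\}$ where the $\psi_i$ are obtained fiberwise by choosing a Parseval frame of $J(\xi)$ depending measurably on $\xi$; concretely one takes the fiberwise orthogonal projection onto $J(\xi)$ applied to a fixed orthonormal basis of $\ell^2$, pulled back through the fiberization isometry. This gives $n$ functions $\psi_1,\dots,\psi_n$ whose \emph{translates} form a Parseval frame of $S_\Gamma(\A)$. The remaining—and genuinely delicate—task is to upgrade this to a family $\Psi=\{\psi_1,\dots,\psi_n\}$ whose full orbits $\{\tr{k}\rot{g}\psi_i\}$ are still Parseval, not merely overcomplete. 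This requires that the fiberwise choice be compatible with the $G$-action: one needs a measurable field of Parseval frames for $J_\Gamma(\xi)$ that is \emph{equivariant} under the (projective/unitary) representation of $G$ induced on the fibers by $\xi\mapsto g\cdot\xi$, so that adding the $G$-translates reproduces the same frame operator rather than inflating it. I expect the main obstacle to be exactly this equivariant selection: one must disintegrate the dual $\wh\RR$ into $G$-orbits, pick a measurable section (fundamental domain) for the $G$-action, construct the Parseval frame on fibers over the section, and then \emph{spread it by the group}, checking via a direct computation with the Zak/fiberization transform that the orbit $\{\tr{k}\rot{g}\psi_i\}$ has frame operator equal to the projection onto $S_\Gamma(\A)$. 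The bookkeeping that the cardinality stays $n$ comes from the fact that one only needs $n$ generators on the fundamental domain because $\dim J_\Gamma\le n$ there as well.

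In summary: (1) show $S_\Gamma(\A)$ is shift-invariant of length $\le n$; (2) pass to the dual/fiber picture and describe its range function $J_\Gamma$; (3) decompose $\wh\RR$ under the $G$-action and pick a measurable fundamental domain; (4) on the fibers over this domain, select (measurably) a Parseval frame of $J_\Gamma(\xi)$ of size $\le n$, and pull it back to functions $\psi_1,\dots,\psi_n$; (5) verify that spreading these by $\rot{g}$, $g\in G$, and $\tr{k}$, $k\in\ZZ$, yields a Parseval frame of $S_\Gamma(\A)$ — this is the step where one uses the equivariance of the construction and where a careful Zak-transform computation is unavoidable. The previously developed characterization of $\Gamma$-invariant subspaces in terms of range functions (and the notion of $\Gamma$-invariant range function announced in the introduction) is precisely what makes steps (2)–(4) rigorous; Theorem A should then follow as an application of that structure theory.
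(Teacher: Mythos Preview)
Your plan contains a genuine error and also imposes an unnecessary hypothesis. The dimension bound in step (1) is false: the range function of $S_\Gamma(\A)$ at $\omega$ is $\J_{S_\Gamma(\A)}(\omega) = \textnormal{span}\{\T[\rot{g}\varphi_i](\omega) : i, g\} = \sum_{g \in G} r_g \,\J_{S_\ZZ(\A)}(g^*\omega)$, a \emph{sum} of (at most) $n$-dimensional spaces, not a single one being permuted; its dimension can be anything up to $n\card$, and this bound is sharp already for $n=1$. Consequently your step (4), which asks for a Parseval frame of $\J_\Gamma(\xi)$ of size at most $n$ over a fundamental domain, is impossible in general. One can salvage a fundamental-domain argument by choosing an $n\card$-element Parseval frame indexed by $I_n \times G$ over $\Omega_0$ and then using the $G$-index to fill in the values of $\T[\psi_i]$ on the remaining pieces $g^*\Omega_0$ of $\Omega$ (this is exactly the mechanism in the proof of Theorem~\ref{theo:main}), but that is not what you describe, and it requires the existence of a Borel section $\Omega_0$ for $\wh{\RR}/(\ZZdual \rtimes G)$ --- an assumption the paper deliberately does \emph{not} make for Theorem~A (see Section~\ref{Sec:Section} and Proposition~\ref{prop:Effros}: such a section need not exist, e.g.\ for the translation--shear group).

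The paper's proof avoids both difficulties by an algebraic device that makes the equivariance automatic and requires no section. One sets $Q(\omega) = \TT_{\AG}(\omega)\big(\G_{\AG}(\omega)^+\big)^{1/2}$, where $\G_{\AG}^+$ is the Moore--Penrose pseudoinverse of the Gramian of $\AG = \{\rot{g}\varphi_i\}$. A one-line computation gives $Q(\omega)^*Q(\omega) = \Proj_{\textnormal{Range}(\G_{\AG}(\omega))}$, so the $n\card$ columns of $Q(\omega)$ form a Parseval frame of $\J_{S_\Gamma(\A)}(\omega)$ for a.e.\ $\omega \in \Omega$, globally and with no choices to be made. The covariance identities $\TT_{\AG}(g^*\omega) = r_{g^{-1}}\TT_{\AG}(\omega)\lambda_g$ and $\G_{\AG}(g^*\omega) = \lambda_{g^{-1}}\G_{\AG}(\omega)\lambda_g$ (Theorem~\ref{theo:preGramiancovariance} and Corollary~\ref{cor:Gramcovariance}) then force $Q(g^*\omega) = r_{g^{-1}}Q(\omega)\lambda_g$ by functional calculus, and the \emph{converse} direction of Theorem~\ref{theo:preGramiancovariance} identifies the $n\card$ columns of $Q$ as exactly $\{\T[\rot{g}\psi_i]\}_{(i,g)}$ for some $\psi_1,\dots,\psi_n$. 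No fundamental domain is ever used; the pseudoinverse is the canonical equivariant selection you were looking for.
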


To prove this result (see Theorem \ref{theo:Parseval}) we use the concept of measurable range function $\J_V$ associated to a $\ZZ$-invariant subspace $V$ of $L^2(\RR)$
and the fiberization mapping $\T$.
These techniques date back to the work \cite{Hel1964} for multiplicative shift-invariant spaces of $L^2(\mathbb{T})$ and to the works \cite{dDR1994} and \cite{Bow2000} for shift-invariant spaces of $L^2(\R^d)$. The extension to LCA groups, that we need in this paper, can be found in \cite{CP2010}.

We characterize those $\ZZ$-invariant spaces $V$ which are also $\Gamma$-invariant, in terms of the structure of the fiber spaces $\J_V(\omega)$, as follows.
\begin{TheoB}
For $g \in G$, let $\Pi(g) = \T \rot{g} \T^{-1}$. If $V$ is a $\ZZ$-invariant space, then it is also $\Gamma$-invariant if and only if
$$
\J_V(\omega) = \Pi(g) \J_V(\omega) \quad \textnormal{a.e.} \ \omega \in \wh{\RR}, \forall \ g \in G.
$$
\end{TheoB}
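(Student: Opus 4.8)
The plan is to exploit the naturality of the fiberization map $\T$ with respect to the two commuting structures at play: the $\ZZ$-action (which $\T$ diagonalizes, turning $\ZZ$-invariance into the pointwise-in-$\omega$ condition that $\J_V(\omega)$ be a closed subspace of the fiber) and the $G$-action (whose push-forward $\Pi(g)=\T\rot{g}\T^{-1}$ acts fiberwise on $\omega$, up to the relabeling of fibers induced by $g$ acting on $\wh{\RR}$). The key structural fact is that $\rot{g}$ normalizes the $\ZZ$-action: since $g$ preserves $\ZZ$, one has $\rot{g}\tr{k}=\tr{gk}\rot{g}$, so conjugating by $\T$ gives that $\Pi(g)$ intertwines the fiberwise multiplication operators in the way dictated by the automorphism $g$ of $\wh{\RR}$. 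This is exactly what makes $\Pi(g)$ act as a measurable field of unitaries on fibers (possibly composed with the measure-preserving base change $\omega\mapsto g^*\omega$ on $\wh{\RR}$, which on the quotient $\wh{\RR}/\ZZdual$ is again measure-preserving because $g$ fixes $\ZZ$).

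First I would make precise, via the characterization of $\ZZ$-invariant spaces by range functions (the Helson--Bownik--Cabrelli--Paternostro machinery cited as \cite{Hel1964,dDR1994,Bow2000,CP2010}), that $V$ is $\ZZ$-invariant iff $f\in V\iff \T f(\omega)\in\J_V(\omega)$ a.e., and that this range function is determined a.e.\ uniquely. Then, for the forward direction, assume $V$ is $\Gamma$-invariant; in particular it is $\rot{g}$-invariant for every $g\in G$. Applying $\T$ to the statement ``$\rot{g}V=V$'' and using $\Pi(g)=\T\rot{g}\T^{-1}$, I get that the range function of $\rot{g}V$ equals the range function of $V$ a.e.; but on the other hand, because $\T\rot{g}=\Pi(g)\T$, the fiber of $\rot{g}f$ at $\omega$ is obtained from the fiber of $f$ by the fixed operator $\Pi(g)$ together with the base change, so the range function of $\rot{g}V$ at $\omega$ is $\Pi(g)$ applied to the range function of $V$ at the appropriate point. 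Matching these and absorbing the base change (either by noting it is a measure-preserving bijection of $\wh{\RR}$ and re-reading the identity a.e., or by having already folded it into the definition of $\Pi(g)$) yields $\J_V(\omega)=\Pi(g)\J_V(\omega)$ a.e.\ for every $g$; since $G$ is countable, the ``a.e.'' survives the intersection over all $g\in G$. For the converse, suppose $\J_V(\omega)=\Pi(g)\J_V(\omega)$ a.e.\ for every $g\in G$. Take $f\in V$, so $\T f(\omega)\in\J_V(\omega)$ a.e.; then $\T\rot{g}f(\omega)=\Pi(g)\T f(\omega)\in\Pi(g)\J_V(\omega)=\J_V(\omega)$ a.e., hence $\rot{g}f\in V$. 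Combined with the hypothesis that $V$ is already $\ZZ$-invariant, and since $\Gamma=\ZZ\rtimes G$ is generated by $\ZZ$ and $G$, this gives $\Gamma$-invariance of $V$.

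The main obstacle I anticipate is \emph{bookkeeping the base change} $\omega\mapsto g^{*}\omega$ on $\wh{\RR}$ correctly and verifying it is compatible with the almost-everywhere statements: one must check that $g$ acting on $\wh{\RR}$ descends to a measure-preserving (indeed bi-measurable) transformation modulo $\ZZdual$ so that null sets are preserved, and one must decide on a convention so that the clean identity $\J_V(\omega)=\Pi(g)\J_V(\omega)$ (with \emph{no} visible base change, as in the statement of Theorem B) is the right reformulation — this presumably means $\Pi(g)$ is defined to already incorporate the fiber relabeling, which is legitimate precisely because $g$ stabilizes $\ZZ$ and hence acts on the fiber index set. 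A secondary, more routine point is justifying that $\Pi(g)=\T\rot{g}\T^{-1}$ is a well-defined unitary and that the fiberwise equality ``$\J_V(\omega)=\Pi(g)\J_V(\omega)$ a.e.'' is meaningful, which follows from measurability of the range function together with measurability of the field $\omega\mapsto\Pi(g)(\omega)$; both are standard once $\rot{g}$ is known to commute appropriately with the $\ZZ$-action. Finally, the countability of $G$ is what allows the single exceptional null set to be chosen uniformly in $g$, so I would flag that hypothesis explicitly at the point where the intersection over $g\in G$ is taken.
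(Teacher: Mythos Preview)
Your proposal is correct and follows essentially the same route as the paper's proof of Theorem~\ref{theo:rangecovariance}: for the forward direction you compute the range function of $\rot{g}V$ (the paper calls it $\J_g(\omega)=r_g\J(g^*\omega)$), observe that $\Pi(g)\T(V)=\T(V)$ gives $M_{\J_g}=M_\J$, and invoke uniqueness of measurable range functions; for the converse you verify fiberwise that $\Pi(g)F(\omega)=r_g F(g^*\omega)\in r_g\J(g^*\omega)=\J(\omega)$. The bookkeeping concern you flag about the base change is exactly what the paper handles by writing the covariance as $\J(g^*\omega)=r_{g^{-1}}\J(\omega)$ (equation~\eqref{eq:rangecovariance}), with the notational convention that $\Pi(g)\J_V(\omega)$ in Theorem~B means $r_g\J_V(g^*\omega)$ via~\eqref{eq:intertwinedrep}.
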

For the proof of this theorem, see Theorem \ref{theo:rangecovariance}.

With these tools, we are able to address the problem of approximation of a given dataset in $L^2(\RR)$ by $\Gamma$-invariant subspaces, whenever the action of the group $\Lambda^\bot \rtimes G$ on $\wh{\RR}$ admits a Borel section. Let $\data = \{f_1,\dots,f_m\} \subset L^2(\RR)$ be a set of functional data. For a closed subspace  $\V \subset L^2(\RR)$ 
define the error functional
$$
\E[\V; \data] = \sum_{i=1}^m \|f_i - \Proj_{\V}f_i\|^2_{L^2(\RR)}.
$$
We provide a constructive proof of the following result.
\begin{TheoC}
Assume that there exists a Borel section of $\wh{\RR}/(\Lambda^\bot \rtimes G)$, and let $\data = \{f_1,\dots,f_m\} \subset L^2(\RR)$. For any $1 \leq \kappa \leq m$ there exists $\Psi \subset L^2(\RR)$ of at most $\kappa$ elements  such that
$$
\E[S_{\Gamma}(\Psi); \data] = \min \{\E[S_{\Gamma}(\A);\data] : \A \subset L^2(\RR) , \ \# \A \leq \kappa \}.
$$
Moreover, the set $\{\tr{k}\rot{g}\psi \, : \, k \in \ZZ, g \in G, \psi \in \Psi \}$ is a Parseval frame of $S_\Gamma(\Psi)$, and the error $\E[S_{\Gamma}(\Psi); \data]$ can be related to the eigenvalues of the Gramian of $\DG = \{R(g)f_i \, : \, g \in G, i = 1, \dots, m\}$.
\end{TheoC}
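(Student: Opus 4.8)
The plan is to translate the approximation problem, via fiberization, into a measurable family of finite-dimensional least-squares problems and then to solve each of them by the spectral (Eckart--Young) theorem. First I would use the isometry $\T$ to decompose the error: for any closed $\ZZ$-invariant subspace $\V$ with measurable range function $\J_\V$,
$$
\E[\V;\data] = \int_{\wh{\RR}/\ZZdual} \sum_{i=1}^m \big\| \T f_i(\omega) - \Proj_{\J_\V(\omega)}\T f_i(\omega)\big\|^2 \, d\omega .
$$
By Theorem B, the extra requirement that $\V$ be $\Gamma$-invariant is equivalent to the covariance $\J_\V(\omega) = \Pi(g)\J_\V(\omega)$ for all $g \in G$ and a.e.\ $\omega$; and, using Theorem A together with the notion of $\Gamma$-invariant range function, a subspace of the form $S_\Gamma(\A)$ with $\#\A \le \kappa$ corresponds exactly to such a covariant range function whose dimension, counted on a section of the $G$-action, is at most $\kappa$.

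Next I would use the Borel section $C$ of $\wh{\RR}/(\ZZdual \rtimes G)$ to localize. Because of the covariance relation and the $\ZZdual$-periodization built into the definition of the range function, the integral above can be folded onto $C$: for a.e.\ $\omega \in C$ the inner quantity becomes the total squared distance from the family of fibers at $\omega$ of the elements of $\DG = \{\rot{g} f_i : g \in G, \ i = 1,\dots,m\}$ --- these are the $G$-translates of the data, and their fibers at $\omega$ record the fibers of the $f_i$ along the whole $\ZZdual \rtimes G$-orbit of $\omega$ --- to a single subspace $W(\omega)$, which plays the role of $\J_\V$ on that orbit. Minimizing $\E$ over $\Gamma$-invariant $S_\Gamma(\A)$ with $\#\A \le \kappa$ thus reduces to choosing, for a.e.\ $\omega \in C$, a subspace $W(\omega)$ of dimension at most $\kappa$ that minimizes this transported error. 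By the finite-dimensional Eckart--Young theorem the minimizer is the span of eigenvectors for the $\kappa$ largest eigenvalues $\lambda_1(\omega) \ge \lambda_2(\omega) \ge \cdots$ of the Gramian $\mathcal{G}(\omega)$ of the transported fibers, and the optimal fiberwise value equals $\sum_i \|\T f_i(\omega)\|^2 - \sum_{j \le \kappa} \lambda_j(\omega) = \sum_{j > \kappa} \lambda_j(\omega)$.

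It then remains to assemble these pointwise choices into an honest $\Gamma$-invariant subspace of the required form. I would invoke a measurable selection theorem to obtain measurable eigenvalue functions $\omega \mapsto \lambda_j(\omega)$ on $C$ and a measurable orthonormal eigenvector field $\omega \mapsto e_j(\omega)$, and set $W(\omega) = \vsp\{e_j(\omega) : j \le \kappa, \ \lambda_j(\omega) > 0\}$. This defines a measurable range function on $C$ of dimension at most $\kappa$, which extends by the covariance relation to a $\Gamma$-invariant measurable range function $\J$ on all of $\wh{\RR}$. Pulling back the normalized eigenvector fields through $\T^{-1}$ and spreading them over the $G$-orbits exactly as in the proof of Theorem A produces a family $\Psi$ with $\#\Psi \le \kappa$ such that $S_\Gamma(\Psi)$ has range function $\J$ and $\{\tr{k}\rot{g}\psi : k \in \ZZ, g \in G, \psi \in \Psi\}$ is a Parseval frame of $S_\Gamma(\Psi)$. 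Integrating the fiberwise identity over $C$ gives $\E[S_\Gamma(\Psi);\data] = \int_C \sum_{j > \kappa} \lambda_j(\omega)\, d\omega$, i.e.\ the sum of the tail eigenvalues of the Gramian of $\DG$, which is both the minimal error and the promised spectral formula.

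The step I expect to be the main obstacle is the measurability: selecting the eigenvalues and a choice of eigenvectors of $\mathcal{G}(\omega)$ measurably in $\omega$ despite jumps in the fiber dimension and in eigenvalue multiplicities, and checking that the resulting $\J$ is a bona fide $\Gamma$-invariant measurable range function. Closely tied to it is the combinatorial bookkeeping that legitimizes passing from the global optimization to the fiberwise one --- namely, showing that $\Gamma$-invariant subspaces generated by at most $\kappa$ elements correspond precisely to $\Gamma$-invariant range functions of dimension at most $\kappa$ on the section, so that folding the $\ZZdual$- and $G$-actions together neither loses admissible competitors nor double-counts the error.
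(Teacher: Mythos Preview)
Your overall strategy matches the paper's: fiberize via $\T$, use the covariance of Theorem~B to fold the error onto the section $\Omega_0$, minimize fiberwise by Eckart--Young, and reassemble. But there is a genuine gap in the dimension accounting that would make the argument fail as written.

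You assert that the fiberwise constraint is $\dim W(\omega) \le \kappa$, and accordingly keep the top $\kappa$ eigenvectors. This is incorrect. A space $S_\Gamma(\A)$ with $\#\A \le \kappa$, viewed as a $\ZZ$-invariant space, is generated by the $\kappa\card$ elements $\{\rot{g}\varphi : g \in G, \varphi \in \A\}$, so its range function satisfies $\dim \J_\V(\omega) \le \kappa\card$, not $\kappa$. The Eckart--Young step must therefore keep the top $\kappa\card$ eigenvectors of $\G_{\DG}(\omega)$, and the error is the integral over $\Omega_0$ of the tail $\sum_{i = \kappa+1}^{m}\sum_{g \in G} \sigma_{i,g}(\omega)^2$, i.e.\ the bottom $(m-\kappa)\card$ eigenvalues, not the bottom $m\card - \kappa$. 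With your constraint the candidate space is strictly too small and in general does not attain the minimum.

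A second gap is your claimed ``exact correspondence'' between $\Gamma$-invariant spaces of length at most $\kappa$ and covariant range functions of a fixed fiber dimension. Only one direction is available (and is all that is needed for the lower bound): length $\le \kappa$ implies $\dim \J_\V(\omega) \le \kappa\card$. The nontrivial point, which you gloss over, is that the Eckart--Young minimizer of dimension $\kappa\card$ can actually be realized by a $\Gamma$-invariant space of length $\le \kappa$. The paper obtains this from the covariance $\G_{\DG}(g^*\omega) = \lambda_{g^{-1}}\G_{\DG}(\omega)\lambda_g$ (Corollary~\ref{cor:Gramcovariance}), which forces the eigenvalues to be $G$-invariant and allows the $\kappa\card$ eigenvectors to be chosen so that the resulting pre-Gramian satisfies the relation of Theorem~\ref{theo:preGramiancovariance}; the converse part of that theorem then produces $\kappa$ functions whose $G$-orbits are the generators. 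Your appeal to ``spreading over $G$-orbits as in Theorem~A'' does not cover this, since Theorem~A starts from a space already known to have $\kappa$ $\Gamma$-generators, whereas here that is precisely what must be shown.
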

For a full statement and a proof, see Theorem \ref{theo:main}.
This theorem includes as special cases Theorems 2.1 and 2.3 in \cite{ACHM2007}, when $\RR = \R^d$ and $G=\{ e\}$, so that $\Gamma = \ZZ \rtimes \{e\} = \ZZ.$

The paper is organized as follows: In Section \ref{Sec:preliminaries} we set the notation and give  the necessary definitions. Section \ref{Sec:structure} is devoted to study the structure of $\Gamma$-invariant spaces. In particular we prove Theorem A and two characterizations of these spaces in terms of range functions (Theorem B) and the pre-Gramian. In Section \ref{Sec:Section} we give conditions for the existence of a Borel section for the action of $\Gamma$. The existence of a Borel section allows us to introduce a notion of measurable $\Gamma$-invariant range function, and to obtain another characterization of $\Gamma$-invariant subspaces. The proof of Theorem C is given in Section \ref{sec:approx} (see Theorem \ref{theo:main}). In this proof we use techniques developed in previous sections and the SVD decomposition used in \cite{ACHM2007}. A particularly important example covered in our situation are the crystallographic groups (see Section \ref{Sec:example}).

\

\noindent
{\bf Acknowledgements}: 
This project has received funding from the European Un\-ion's Horizon 2020 research and innovation programme under the Marie Sk\l odowska-Curie grant agreement No 777822. In addition, D. Barbieri and E. Hern\'andez were supported by Grant MTM2016-76566-P (Ministerio de Ciencia, Innovaci\'on y Universidades, Spain). C. Cabrelli and U. Molter were supported by Grants UBACyT 20020170100430BA (University of Buenos Aires), PIP11220150100355 (CONICET) and PICT 2014-1480 (Secretary of Science and Technology from Argentina).

\section{Preliminaries} \label{Sec:preliminaries}

Let $\RR$ be a second countable LCA group and let $\ZZ$ be a uniform lattice of $\RR$ (i.e. $\ZZ$ discrete such that $\RR/\ZZ$ is compact). Let $G$ be a discrete and at most countable group that acts on $\RR$ by continuous automorphisms, and let us denote this action by $(g,x) \mapsto g x \in \RR$ for $g \in G$ and $x \in \RR$. 
We will also denote by $\card$ the cardinality of $G$, which can be either finite or infinite.

Let $\wh{\RR}$ be the Pontryagin dual of $\RR$. We denote the Pontryagin duality indistinctly by
$
\langle \xi , x \rangle = e^{2\pi i \xi . x} \ , \quad \xi \in \wh{\RR} , x \in \RR
$
and we denote by $\ZZdual = \{ \ell \in \wh{\RR} \, : \, \langle \ell, k \rangle = 1 \ \forall \, k \in \ZZ\}$ the annihilator of $\ZZ$.
The action of $G$ on $\RR$ induces an action of $G$ on $\wh{\RR}$ by duality:
$
\langle g^* \xi , x \rangle := \langle \xi, g x\rangle.
$
Observe that this action satisfies
\begin{equation} \label{Eq:star}
g_1^* g_2^* = (g_2 g_1)^* \quad \mbox{for all} \ g_1, g_2 \in G.
\end{equation}
Assume that that the action of $G$ on $\RR$ preserves $\ZZ$, that is $g \ZZ = \ZZ$ for all $g \in G$, and observe that this is equivalent to require that the dual action of $G$ on $\wh{\RR}$ preserves $\ZZ^\bot$.

We will need to let $G$ act on a Borel section $\Omega \subset \wh{\RR}$ for the action of $\ZZ^\bot$ on $\wh{\RR}$.
\begin{defi}
The action of $G$ on the quotient group $\wh{\RR}/\ZZ^\bot$ is
$$
g^*[\xi] := [g^* \xi] \ , \quad \xi \in \wh{\RR}, \quad g \in G,
$$
where $[\xi]$ is the class of $\xi$ in $\wh{\RR}/\ZZ^\bot$. If $\Omega \subset \wh{\RR}$ is a Borel section of $\wh{\RR}/\ZZ^\bot$, we will denote the action of $G$ on $\Omega$ accordingly, keeping for simplicity the same notation $(g,\omega) \mapsto g^*\omega$ for $g \in G$ and $\omega \in \Omega$.
\end{defi}
Notice that this coincides with the dual action of $G$ on $\wh{\RR}$ only if $\Omega$ is an invariant subset of $\wh{\RR}$ for that action.

Since the action of $G$ preserves $\ZZ$, we can define the semidirect product $\Gamma = \ZZ \rtimes G = \{(k,g) \, : \, k \in \ZZ, g \in G\}$, with composition law
$$
(k,g) \cdot (k',g') = (k + gk', gg').
$$
The action of $\Gamma$ on $\RR$ given by
$$
\gamma x = gx + k \ , \quad \gamma = (k,g) \in \Gamma \, , \ x \in \RR
$$
will provide the symmetry with respect to which we will study invariance.

We will denote the Haar measure of $\RR$ of a measurable set $E \subset \RR$ by $|E|$. 
Since $g\ZZ = \ZZ$, then the action of $G$ on $\RR$ preserves the Haar measure, i.e.
$$
|gE| = |E| \ , \quad \forall \ E \subset \RR \textnormal{ measurable}, \quad \forall \ g \in G .
$$
Note also that the invariance of the Haar measure of $\RR$ under the action of $G$ implies that the same property holds in $\wh{\RR}$ (see e.g. \cite[Lemma 3]{BHM19}).

\subsection{Fundamental operators} \label{Sec:fundamental}

For $\Hil$ a Hilbert space, we will denote by $\B(\Hil)$ the bounded operators on $\Hil$.
We use the following notation for the Fourier transform of $f \in L^1(\RR)$:
$$
\F f (\xi) = \wh{f}(\xi) = \int_{\RR} e^{-2\pi i \xi . x} f(x) dx = \int_{\RR} \ol{\langle \xi , x \rangle} f(x) dx ,
$$
and it can be extended to an isometry of $L^2(\RR)$ by density.

Let $T : \ZZ \to \U(L^2(\RR))$ and $R : G \to \U(L^2(\RR))$ be the representations
$$
\tr{k}f(x) = f(x - k) \, , \ \rot{g}f(x) = f(g^{-1}x) \ , \quad f \in L^2(\RR).
$$
Since $\rot{g}\tr{k} = \tr{gk}\rot{g}$, the map $(k,g) \mapsto \tr{k}\rot{g}$ defines a unitary representation of $\Gamma = \ZZ \rtimes G$ on $L^2(\RR)$.

It is easy to prove that for all $f \in L^2(\RR)$ and all $(k,g) \in \Gamma$ we have
\begin{equation}\label{eqlem:Fourier}
\wh{\tr{k} f}(\xi) = e^{-2\pi i \xi . k} \wh{f}(\xi) \ , \quad
\wh{\rot{g} f}(\xi) = \wh{f}(g^* \xi).
\end{equation}

\begin{defi}\label{def:mappingT}
For $f\in L^2(\RR)$ and $\omega \in \wh{\RR}$ define the mapping $\T$ as
\begin{equation}\label{eqn:mappingT}
\T[f] (\omega) = \{\wh{f}(\omega + s)\}_{s \in \ZZdual}\,, \quad f\in L^2(\RR), \ \omega \in \wh{\RR}.
\end{equation}
\end{defi}

The following result can be found in \cite[Proposition 3.3]{CP2010}.

\begin{lem}\label{lem:gram}
Let $\Omega \subset \wh{\RR}$ be a Borel section of $\,\wh{\RR} / \ZZdual \approx \wh{\ZZ}$
The periodization mapping  $\T$ defined above is an isometric isomorphism between the Hilbert spaces $L^2(\RR)$ and $ L^2(\Omega,\ell_2(\ZZdual))$.
\end{lem}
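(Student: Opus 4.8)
The plan is to reduce the statement to a tiling decomposition of $\wh{\RR}$ together with the Plancherel theorem. First, recall that $\F$ is, by Plancherel, an isometric isomorphism of $L^2(\RR)$ onto $L^2(\wh{\RR})$, for the normalizations of the Haar measures on $\RR$ and $\wh{\RR}$ that make this true. Since $\T = P \circ \F$, where $P$ is the \emph{periodization map}
$$
P : L^2(\wh{\RR}) \to L^2(\Omega, \ell_2(\ZZdual)), \qquad P[h](\omega) = \{h(\omega + s)\}_{s \in \ZZdual},
$$
it suffices to prove that $P$ is an isometric isomorphism.

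Second, I would exploit that $\Omega$ being a Borel section of $\wh{\RR}/\ZZdual$ means exactly that the family $\{\Omega + s\}_{s \in \ZZdual}$ is a measurable partition of $\wh{\RR}$ up to a null set. Combined with Weil's integration formula for the quotient $\wh{\RR}/\ZZdual$ (counting measure on the discrete group $\ZZdual$, and the Haar measure on $\wh{\RR}$ normalized compatibly), this yields, for every measurable $F \geq 0$ on $\wh{\RR}$,
$$
\int_{\wh{\RR}} F(\xi)\, d\xi = \sum_{s \in \ZZdual} \int_{\Omega} F(\omega + s)\, d\omega .
$$
Applying this to $F = |h|^2$ gives $\|h\|_{L^2(\wh{\RR})}^2 = \int_\Omega \sum_{s \in \ZZdual} |h(\omega+s)|^2\, d\omega = \|P[h]\|_{L^2(\Omega,\ell_2(\ZZdual))}^2$, so $P$ is a well-defined isometry; in particular $P[h](\omega) \in \ell_2(\ZZdual)$ for a.e.\ $\omega$, and $\omega \mapsto P[h](\omega)$ is measurable as an $\ell_2(\ZZdual)$-valued map because $\ell_2(\ZZdual)$ is separable and each coordinate $\omega \mapsto h(\omega+s)$ is measurable (the section being Borel makes the relevant translations measurable maps on $\Omega$).

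Third, for surjectivity I would, given $c = \{c_s\}_{s\in\ZZdual} \in L^2(\Omega,\ell_2(\ZZdual))$, define $h$ on $\wh{\RR}$ by $h(\omega+s) := c_s(\omega)$ for $\omega \in \Omega$, $s \in \ZZdual$; this is unambiguous a.e.\ since the translates of $\Omega$ are pairwise disjoint mod null sets, and the integration formula above shows $h \in L^2(\wh{\RR})$ with $P[h] = c$. Then $f := \F^{-1} h$ satisfies $\T[f] = c$. Injectivity is immediate from the isometry property, so $\T = P\circ\F$ is an isometric isomorphism. I expect the only delicate points to be the careful bookkeeping of measurability and the choice of normalizations making the constant in the decomposition formula exactly $1$; the existence of a Borel section $\Omega$ of $\wh{\RR}/\ZZdual$ (which is legitimate because $\ZZdual$ is discrete and $\wh{\RR}$ is second countable, hence the quotient map admits a Borel cross-section) is standard and can be invoked directly, as in the references cited above.
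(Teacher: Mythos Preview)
Your proposal is correct. The paper itself does not give a proof of this lemma; it simply cites \cite[Proposition 3.3]{CP2010} and moves on. The argument you sketch---factor $\T = P \circ \F$, invoke Plancherel for $\F$, and use the tiling $\wh{\RR} = \bigcup_{s \in \ZZdual}(\Omega + s)$ (disjoint up to null sets) together with translation invariance of Haar measure to show $P$ is an isometry, then build the inverse of $P$ by patching---is precisely the standard route and is essentially what one finds in the cited reference. Nothing is missing; the measurability and normalization remarks you flag are the only places where care is needed, and you have handled them adequately.
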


\begin{defi}\label{def:gram}
For a finite family $\A = \{\varphi_i\}_{i = 1}^n \subset L^2(\RR)$, we denote by $\TT_\A $ the pre-Gramian\footnote{Field over $\Omega$ of synthesis operators in $\ell_2(\ZZdual)$ with respect to the families $\{\T[\varphi_i](\omega)\}_{i = 1}^n$.} of $\A$,  $\TT_\A \in L^2(\Omega,\B(\C^n,\ell_2(\ZZdual)))$. It is given by
\begin{equation}\label{Eq:pre-gramian}
\TT_\A (\omega) c = \sum_{j = 1}^n \T[\varphi_j](\omega) c_j \ , \quad c = (c_j)_{j=1}^n \in \C^n\,, \ \omega \in \Omega.
\end{equation}
\end{defi}
Note that the pre-Gramian can be seen as the (possibly infinite) matrix
$$
\TT_\A (\omega) = \left(
\begin{array}{ccc}
\vdots &  & \vdots \\
\T[\varphi_1](\omega) & \dots & \T[\varphi_n](\omega)\\
\vdots &  & \vdots 
\end{array}
\right) \,,
$$
and its adjoint $\TT_\A^* \in L^2(\Omega,\B(\ell_2(\ZZdual),\C^n))$ is given by
$$
\TT_\A^*(\omega) v = \{\langle v, \T[\varphi_i](\omega)\rangle_{\ell_2(\ZZdual)}\}_{i = 1}^n \ , \quad v \in \ell_2(\ZZdual).
$$

\begin{defi}\label{def:gramian}
The Gramian of $\A$ is the operator $\G_\A = \TT_\A^*\TT_\A \in L^1(\Omega,\C^{n \times n})$, that reads
\begin{align}
\G_\A(\omega) c&  = \TT_\A^*(\omega)\TT_\A(\omega) c = \TT_\A^*(\omega)\left(\sum_{j = 1}^n \T[\varphi_j](\omega)c_j \right )\nonumber\\
& = \left\{\sum_{j = 1}^n \langle \T[\varphi_j](\omega),\T[\varphi_i](\omega)\rangle_{\ell_2(\ZZdual)} c_j \right\}_{i=1}^n, \  \quad c = (c_j)_{j=1}^n \in \C^n\,.\label{Eq:gramian}
\end{align}
\end{defi}
Note that, if $\G_\A(\omega)_{i,j}$ denotes the $(i,j)$ element of the $n\times n$ matrix of $\G_\A(\omega)$, $i,j \in \{1,\dots,n\}$,  we have
$$
\G_\A(\omega)_{i,j} = \langle \T[\varphi_j](\omega),\T[\varphi_i](\omega)\rangle_{\ell_2(\ZZdual)} = \sum_{s \in \ZZdual} \wh{\varphi_j}(\omega + s) \ol{\wh{\varphi_i}(\omega + s)}.
$$

We will need to intertwine the representation $(k,g) \mapsto \tr{k}\rot{g}$ of $\Gamma$ on $L^2(\RR)$ with the isomorphism $\T$. This will be done in Lemma \ref{lem:intertwining}. Before, we introduce the following notation.

\begin{defi}\label{def:rpeque}
We denote by $r : G \to \U(\ell_2(\ZZdual))$ the representation given by
$$
(r_g(a))(s) = a(g^* s) \ , \quad a \in \ell_2(\ZZdual) , s \in \ZZdual.
$$
\end{defi}

\begin{lem}\label{lem:intertwining}
For $f\in  L^2(\RR)$, $(k,g)\in \Gamma$, and $\omega \in \Omega$,
$$
\T[\tr{k} \rot{g} f] (\omega) = e^{-2\pi i \omega.k} r_g \T[f](g^*\omega).
$$
\end{lem}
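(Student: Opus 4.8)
The plan is to verify the identity coordinate-by-coordinate in $\ell_2(\ZZdual)$, using only the definition of $\T$ in \eqref{eqn:mappingT}, the elementary transformation rules \eqref{eqlem:Fourier} for the Fourier transform, and the fact that $g^*$ is an automorphism of $\wh{\RR}$ preserving $\ZZdual$. Fix $s \in \ZZdual$. Directly from \eqref{eqn:mappingT}, the $s$-th coordinate of the left-hand side is $\wh{\tr{k}\rot{g}f}(\omega+s)$. Applying the two identities in \eqref{eqlem:Fourier} in the order in which the operators are written — first the translation rule to peel off $\tr{k}$, then the dilation rule for $\rot{g}$ — this equals $e^{-2\pi i (\omega + s).k}\,\wh{f}\big(g^*(\omega+s)\big)$.

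Next I would simplify both factors separately. For the exponential, since $s \in \ZZdual$ annihilates $\ZZ$ we have $e^{-2\pi i s.k} = \ol{\langle s,k\rangle} = 1$, hence $e^{-2\pi i(\omega+s).k} = e^{-2\pi i \omega.k}$, a quantity independent of $s$ that factors out. For the argument of $\wh f$, linearity of the automorphism $g^*$ gives $g^*(\omega+s) = g^*\omega + g^*s$, and $g^*s \in \ZZdual$ because $g^*$ preserves $\ZZdual$. Therefore the $s$-th coordinate equals $e^{-2\pi i \omega.k}\,\wh{f}(g^*\omega + g^*s) = e^{-2\pi i \omega.k}\,\T[f](g^*\omega)(g^*s)$, which by Definition \ref{def:rpeque} is precisely $e^{-2\pi i \omega.k}\,\big(r_g\,\T[f](g^*\omega)\big)(s)$. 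Since $s\in\ZZdual$ was arbitrary and the Fourier identities hold a.e., the asserted equality follows for a.e.\ $\omega$.

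Two minor points merit only a remark rather than any real work. First, $\T[f]$ is defined pointwise on all of $\wh{\RR}$ by \eqref{eqn:mappingT}, so $\T[f](g^*\omega)$ is unambiguous whether $g^*\omega$ denotes the genuine dual action or its representative in the section $\Omega$: the two differ by an element of $\ZZdual$, which merely relabels the coordinates of $\T[f]$ and leaves the final equation unchanged. Second, $r_g$ is a well-defined unitary of $\ell_2(\ZZdual)$ exactly because $s \mapsto g^*s$ is a bijection of $\ZZdual$. The only slightly delicate step — the one I would take care to present explicitly — is the bookkeeping of how $\rot g$ and $\tr k$ interact, since $\rot g \tr k = \tr{gk}\rot g$; applying \eqref{eqlem:Fourier} to $\tr k \rot g f$ in the given order avoids producing this twist and yields the formula directly.
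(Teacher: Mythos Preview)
Your proof is correct and follows essentially the same route as the paper: a direct coordinate-by-coordinate computation using \eqref{eqn:mappingT} and the Fourier rules \eqref{eqlem:Fourier}, together with $g^*\ZZdual=\ZZdual$. Your write-up is in fact slightly more careful than the paper's, since you make explicit the step $e^{-2\pi i(\omega+s).k}=e^{-2\pi i\omega.k}$ (via $s\in\ZZdual$, $k\in\ZZ$) that the paper's chain of equalities absorbs silently.
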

\begin{proof}
Using (\ref{eqlem:Fourier}),
\begin{align*}
\T[\tr{k} \rot{g} f] (\omega) & = \{\F(\tr{k}\rot{g}f)(\omega + s)\}_{s \in \ZZdual} = \{e^{-2\pi i \omega.k}\F(\rot{g}f)(\omega + s)\}_{s \in \ZZdual}\\
& = e^{-2\pi i \omega.k}\{\F(f)(g^*\omega + g^* s))\}_{s \in \ZZdual}\\
& = e^{-2\pi i \omega.k} r_g\{\F(f)(g^*\omega + s)\}_{s \in \ZZdual} = e^{-2\pi i \omega.k} r_g \T[f](g^*\omega). \qedhere
\end{align*}
\end{proof}
\begin{defi}\label{def:Pi}
For $g\in G$, we denote by $\Pi(g) = \T \rot{g} \T^{-1}$. Then, $\Pi$ is a unitary representation of $G$ on the Hilbert space $L^2(\Omega,\ell_2(\ZZdual))$ and, by Lemma \ref{lem:intertwining}, it reads
\begin{equation}\label{eq:intertwinedrep}
\Pi(g) F(\omega) = r_g(F(g^*\omega)) \, , \quad F \in L^2(\Omega,\ell_2(\ZZdual)).
\end{equation}
\end{defi}

\subsection{Shift-invariant spaces}

A closed subspace $\V \subset L^2(\RR)$ is $\ZZ$-invariant (or shift-invariant by $\ZZ$) if $\tr{k}\V \subset \V$ for all $k \in \ZZ$. 
For $\A \subset L^2(\RR)$ a countable family, we will write 
$$
S(\A):= \ol{\vsp}\{\tr{k}\varphi \, : \, k \in \ZZ, \varphi \in \A\}\,.
$$
Since $L^2(\RR)$ is separable, if $\V$ is a $\ZZ$-invariant subspace of $L^2(\RR)$, there exist a countable set $\A \subset L^2(\RR) $ such that $\V = S(\A).$

The study of the structure of shift-invariant spaces can be done in terms of the so called range function, defined as follows.

\begin{defi}\label{rf-def}
Let $\Omega \subset \wh{\RR}$ be a Borel section of $\wh{\RR} / \ZZdual \approx \wh{\ZZ}$.
A range function is a map
$$
\J : \Omega \to \{\textnormal{closed subspaces of } \ell_2(\ZZdual)\} .
$$
A range function $\J$ is said to be measurable if the family $\pj_{\J(\omega)} \in \B(\ell_2(\ZZdual))$ of orthogonal projections onto $\J(\omega)$ is measurable.
Given a range function $\J$, we will denote by $M_\J$ the closed subspace of $L^2(\Omega,\ell_2(\ZZdual))$ given by
$$
M_\J = \{F \in L^2(\Omega,\ell_2(\ZZdual)) \, : \, F(\omega) \in \J(\omega) \ \textnormal{a.e. } \omega \in \Omega\} .
$$
\end{defi}

We state below the results we need in the sequel. Their proofs can be found in \cite{CP2010}.

\begin{lem} \label{Lem:range}
Let $\J$ be a measurable range function, let $\Proj_{M_\J} \in \B(L^2(\Omega,\ell_2(\ZZdual)))$ be the orthogonal projection onto $M_\J$ and let $\{\pj_{\J(\omega)}\}_{\omega \in \Omega} \subset \B(\ell_2(\ZZdual))$ be the measurable field of orthogonal projections onto $\{\J(\omega)\}_{\omega \in \Omega}$. Then
$$
(\Proj_{M_\J}F)(\omega) = \pj_{\J(\omega)}(F(\omega)) \quad \forall \ F \in L^2(\Omega,\ell_2(\ZZdual)) \ , \quad \textnormal{a.e. } \omega \in \Omega .
$$
\end{lem}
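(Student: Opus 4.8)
The plan is to produce the orthogonal projection onto $M_\J$ by hand: define a candidate operator $P$ on $L^2(\Omega,\ell_2(\ZZdual))$ by the fiberwise formula $(PF)(\omega) = \pj_{\J(\omega)}(F(\omega))$, show that $P$ is a well-defined bounded operator, that it is an orthogonal projection (i.e. $P^2 = P = P^*$), and that its range is exactly $M_\J$. Uniqueness of the orthogonal projection onto a closed subspace of a Hilbert space then forces $P = \Proj_{M_\J}$, which is the asserted identity.

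First I would check that $P$ is well defined and bounded. Since $\J$ is a measurable range function, the field $\omega \mapsto \pj_{\J(\omega)}$ is a measurable field of bounded operators on the separable Hilbert space $\ell_2(\ZZdual)$; combined with the measurability of $F$, this yields that $\omega \mapsto \pj_{\J(\omega)}(F(\omega))$ is again a measurable $\ell_2(\ZZdual)$-valued function. The pointwise contraction estimate $\|\pj_{\J(\omega)}(F(\omega))\|_{\ell_2(\ZZdual)} \le \|F(\omega)\|_{\ell_2(\ZZdual)}$ then shows $PF \in L^2(\Omega,\ell_2(\ZZdual))$ with $\|PF\| \le \|F\|$, so $P \in \B(L^2(\Omega,\ell_2(\ZZdual)))$.

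Next, since $\pj_{\J(\omega)}$ is an orthogonal projection on $\ell_2(\ZZdual)$ for a.e.\ $\omega$, we have $\pj_{\J(\omega)}^2 = \pj_{\J(\omega)} = \pj_{\J(\omega)}^*$, whence $P^2 = P$; and, using the self-adjointness of each fiber projection together with Fubini, $\langle PF, H\rangle = \int_\Omega \langle \pj_{\J(\omega)}F(\omega), H(\omega)\rangle_{\ell_2(\ZZdual)}\, d\omega = \int_\Omega \langle F(\omega), \pj_{\J(\omega)}H(\omega)\rangle_{\ell_2(\ZZdual)}\, d\omega = \langle F, PH\rangle$, so $P^* = P$. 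Hence $P$ is the orthogonal projection onto $\mathrm{Range}(P)$. Finally I would identify the range: if $F \in M_\J$ then $F(\omega) \in \J(\omega)$ a.e., so $\pj_{\J(\omega)}(F(\omega)) = F(\omega)$ a.e., i.e.\ $PF = F$; conversely if $F = PG$ then $F(\omega) = \pj_{\J(\omega)}(G(\omega)) \in \J(\omega)$ a.e., so $F \in M_\J$. Therefore $\mathrm{Range}(P) = M_\J$ and $P = \Proj_{M_\J}$.

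The one genuinely delicate point is the measurability claim in the first step — namely that applying a measurable field of orthogonal projections to an $L^2$ vector field again produces a measurable $\ell_2(\ZZdual)$-valued function; this is handled by reducing $F$ to simple vector fields and invoking weak measurability of the operator field together with Pettis' theorem in the separable setting. Everything else is an essentially pointwise transfer of the algebraic identities $P^2 = P = P^*$ and an elementary description of the range. Alternatively, one may simply cite \cite{CP2010}, where this statement is established.
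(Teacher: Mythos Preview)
Your argument is correct and is the standard way to prove this fact: define the fiberwise projection, check it is a bounded self-adjoint idempotent, and identify its range with $M_\J$. The paper itself does not prove this lemma at all but simply refers to \cite{CP2010} for the proof, so your write-up actually supplies more than the paper does; your closing remark that one may alternatively cite \cite{CP2010} is exactly what the authors chose to do.
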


\begin{theo}\label{Th:Helson}
Let $\V$ be a closed subspace of $L^2(\RR)$ and $\T$ the map of Definition \ref{def:mappingT}. The subspace $\V$ is $\Lambda$-invariant if and only if there exists a unique measurable range function $\J_\V$ such that $\T(\V)=M_{\J_\V}$.

Moreover, if $\V=S(\A)$ for some countable set $\A$ of $L^2(\RR)$, the measurable range function associated to $S(\A)$ satisfies
$$
\J_\V(\omega) =\ol{\vsp}\{\T[\varphi](\omega) \, : \, \varphi \in \A\}, \quad a.e. \ \omega \in \Omega.
$$
\end{theo}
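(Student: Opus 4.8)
The plan is to transport the statement through the isometric isomorphism $\T$ of Lemma~\ref{lem:gram} and reduce it to the structure theory of $L^\infty(\Omega)$-submodules of $L^2(\Omega,\ell_2(\ZZdual))$. \textbf{First}, specialize Lemma~\ref{lem:intertwining} to $g=\id$: this gives $\T[\tr{k}f](\omega)=e^{-2\pi i\omega . k}\,\T[f](\omega)$, i.e.\ $\T\tr{k}\T^{-1}=M_{\chi_k}$ is the operator of multiplication by the character $\chi_k(\omega):=e^{-2\pi i\omega . k}$. Since $\T$ is a unitary isomorphism, $\V$ is $\ZZ$-invariant if and only if $\T(\V)$ is invariant under every $M_{\chi_k}$, $k\in\ZZ$. \textbf{Second}, I would upgrade this to invariance under all of $L^\infty(\Omega)$. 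Because $\ZZ$ is a discrete lattice, $\wh{\RR}/\ZZdual\cong\wh{\ZZ}$ is a compact group whose structure $\Omega$ inherits, and the $\chi_k$ are the pullbacks of the characters of $\wh{\ZZ}$; hence their linear span is a conjugation-closed, point-separating, unital subalgebra of $C(\Omega)$, dense in $C(\Omega)$ by Stone--Weierstrass. Approximating an arbitrary $\phi\in L^\infty(\Omega)$ pointwise a.e.\ by continuous functions with uniformly bounded sup-norm and using dominated convergence, any closed subspace $M\subset L^2(\Omega,\ell_2(\ZZdual))$ invariant under all $M_{\chi_k}$ is an $L^\infty(\Omega)$-module.

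\textbf{Third} comes the heart of the argument: every closed $L^\infty(\Omega)$-submodule $M$ equals $M_\J$ for a unique measurable range function. Choosing a countable dense family $\{F_j\}\subset M$, set $\J(\omega):=\ol{\vsp}\{F_j(\omega):j\in\N\}$. A measurable Gram--Schmidt procedure applied to the fields $\omega\mapsto F_j(\omega)$ yields a measurable field of orthonormal bases of the spaces $\J(\omega)$, so $\J$ is a measurable range function, and trivially $M\subseteq M_\J$. For the reverse inclusion it suffices to check that no nonzero $H\in M_\J$ is orthogonal to $M$: such an $H$ would satisfy $\int_\Omega\phi(\omega)\langle F_j(\omega),H(\omega)\rangle_{\ell_2(\ZZdual)}\,d\omega=0$ for all $\phi\in L^\infty(\Omega)$ and all $j$, hence $\langle F_j(\omega),H(\omega)\rangle=0$ a.e.\ for every $j$, so $H(\omega)\perp\J(\omega)$ a.e., which together with $H(\omega)\in\J(\omega)$ a.e.\ forces $H=0$. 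Thus $\T(\V)=M_\J$, and we set $\J_\V:=\J$.

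\textbf{Fourth}, uniqueness up to a null set is the standard measurable-selection argument: if $M_\J=M_{\J'}$ but $\J(\omega)\neq\J'(\omega)$ on a set of positive measure, one selects measurably a vector field lying in $\J(\omega)$ but not always in $\J'(\omega)$, producing an element of $M_\J\setminus M_{\J'}$, a contradiction. This in particular shows the range function $\J_\V$ attached to a given $\ZZ$-invariant $\V$ is intrinsic.

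\textbf{Finally}, for the ``moreover'' part take $\V=S(\A)$ with $\A$ countable. By the first step $\T(\V)=\ol{\vsp}\{\chi_k\,\T[\varphi]:k\in\ZZ,\ \varphi\in\A\}$. The range function $\omega\mapsto\ol{\vsp}\{\T[\varphi](\omega):\varphi\in\A\}$ defines an $L^\infty(\Omega)$-module containing each $\T[\varphi]$, hence containing $\T(\V)$; since a sequence converging in $L^2$ has an a.e.\ convergent subsequence, every fiber of $\T(\V)$ lies in this span a.e., giving $\J_\V(\omega)\subseteq\ol{\vsp}\{\T[\varphi](\omega):\varphi\in\A\}$ a.e. Conversely $\T[\varphi]\in\T(\V)=M_{\J_\V}$ forces $\T[\varphi](\omega)\in\J_\V(\omega)$ a.e.\ for every $\varphi\in\A$, whence the reverse containment and equality a.e. I expect the main obstacle to be the third step --- building the measurable field of projections $\pj_{\J(\omega)}$ (the measurable Gram--Schmidt) and the direct-integral bookkeeping needed to pin down $M_{\J_\V}=\T(\V)$; Steps one, two and four are essentially formal once the isometry $\T$ is in hand.
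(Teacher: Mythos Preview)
The paper does not prove this theorem: it states the result and refers the reader to \cite{CP2010} for the proof (see the sentence preceding Lemma~\ref{Lem:range} and Remark~\ref{range-unicidad}). Your proposal is precisely the standard argument one finds there and in its predecessors (Helson, de~Boor--DeVore--Ron, Bownik): transport $\ZZ$-invariance via $\T$ to invariance under multiplication by characters, upgrade to an $L^\infty(\Omega)$-module structure via Stone--Weierstrass, and then identify closed $L^\infty$-submodules with measurable range functions using a measurable Gram--Schmidt on a countable dense family. The sketch is correct and matches the approach of the cited reference; the only point worth tightening is that ``trivially $M\subseteq M_\J$'' still needs the a.e.-convergent-subsequence argument you invoke later in Step~5, and for uniqueness the paper's own route (Remark~\ref{range-unicidad}) goes through Lemma~\ref{Lem:range} rather than measurable selection, which is slightly cleaner.
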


\begin{rem}\label{range-unicidad}
Uniqueness of the the measurable range function in Theorem \ref{Th:Helson} is understood in the following sense: two range functions $\J_1$ and $\J_2$ are equal if $\J_1(\omega) = \J_2(\omega)\  a. e. \ \omega \in \Omega.$ The uniqueness of the measurable range function associated to a closed $\Lambda$-invariant subspace $\V$ of $L^2(\RR)$ in Theorem \ref{Th:Helson} is a consequence of Lemma \ref{Lem:range} (for the proof see \cite[Lemma 3.11]{CP2010}).
\end{rem}

\subsection{Range function and SNAG theorem}

It is worth to observe that the given formulation of invariance by translations of LCA groups, whose approach in terms of range functions dates back to \cite{Hel1964}, can be restated in terms of a generalization of a well-known result by Stone (see e.g. \cite[Theorem 4.45]{Folland2015}), also known as SNAG Theorem. This result states that for any unitary representation $\pi$ of an LCA group $\ZZ$ on a Hilbert space $\mathcal{H}$ there exists a projection valued measure $E_\pi$ of $\mathcal{H}$, defined on $\widehat{\ZZ}$, such that each $\pi(k)$, for $k \in \ZZ$, can be written as
$$
\pi(k) = \int_{\widehat{\ZZ}} e^{2\pi i \omega.k} dE_\pi(\omega).
$$

The relationship between this decomposition and the construction of generalized multiresolution analysis was pointed out in \cite{Packer2004}, while the role of the SNAG Theorem for the study of reproducing systems in invariant spaces under general unitary representations of LCA groups was clarified in \cite{HSWW10}. In that paper, the authors define the unitary representation $\pi$ to be dual integrable if, for all $\varphi, \psi \in \mathcal{H}$, the measure given by $\langle E_\pi(B)\varphi,\psi\rangle_{\mathcal{H}}$ on Borel subsets $B \subset \wh{\ZZ}$ is absolutely continuous with respect to the Haar measure on $\widehat{\ZZ}$. They call the corresponding Radon-Nikodym derivative \emph{bracket map}, denoted by $[\varphi,\psi]$, and prove that the class of dual integrable unitary representations coincides with that of square integrable ones. These results were later extended to non abelian discrete groups in \cite{BHP2015}. When specialized to the case under study, described in the previous sections, the SNAG Theorem states that there exists a projection valued measure $E_T$ of $L^2(\RR)$, defined on $\Omega$, such that each $\tr{k}$ can be written as $\tr{k} = \int_{\Omega} e^{2\pi i \omega.k} dE_T(\omega)$. This representation is dual integrable, and an explicit expression for the bracket map for $\varphi, \psi \in L^2(\RR)$ is $[\varphi,\psi](\omega) = \sum_{s \in \ZZdual} \widehat{\varphi}(\omega + s)\ol{\widehat{\psi}(\omega + s)} = \langle\T[\varphi](\omega),\T[\psi](\omega)\rangle_{\ell_2(\ZZdual)}$. Indeed, we have that
$$
\langle T_k\varphi,\psi\rangle_{\mathcal{H}} = \int_\Omega e^{2\pi i \omega.k} [\varphi,\psi](\omega) d\omega.
$$

On the other hand, range functions can be formulated in terms of direct integrals. In \cite{BR15} the authors  observe that, for a $\ZZ$-invariant subspace $\V \subset L^2(\RR)$, and denoting by $\T$ the isomorphism (\ref{eqn:mappingT}), it holds that
$\displaystyle
\T(\V) = \int_{\Omega}^\oplus \J_\V(\omega) d\omega.
$
Thus, by Lemma \ref{Lem:range}, we can write the orthogonal projection of $L^2(\Omega,\ell_2(\ZZdual))$ onto $\T(\V)$ as
\begin{equation}\label{eq:RangeProjection}
\Proj_{\T(\V)} = \int_{\Omega}^\oplus \pj_{\J_\V(\omega)} d\omega.
\end{equation}

The relationship between this direct integral decomposition and the SNAG decomposition of $\tr{k}$ can be made explicit as follows. Let $m = \T \tr{} \T^{-1}$ be the unitary representation of $\ZZ$ on $L^2(\Omega,\ell_2(\ZZdual))$ obtained by intertwining $\tr{}$ with $\T$. It reads
$$
m(k) F(\omega) = e^{2\pi i \omega.k} F(\omega) \ , \quad F \in L^2(\Omega,\ell_2(\ZZdual)).
$$
The projection valued measure $E_m$ for $m$ given by the SNAG Theorem is then related to the one for $T$ by
$$
E_m(B) = \T E_T(B) \T^{-1} \ , \quad B \subset \Omega \ \textnormal{ Borel}.
$$
Thus we have that for all Borel sets $B \subset \Omega$
$$
\langle E_m(B)F,H\rangle_{L^2(\Omega,\ell_2(\ZZdual))} = \int_B \langle F(\omega),H(\omega)\rangle_{\ell_2(\ZZdual)} d\omega \ , \quad F, H \in L^2(\Omega,\ell_2(\ZZdual))
$$
so that $E_m$ reads explicitly $E_m(B) F(\omega) = \one{B}(\omega)F(\omega)$. Let now $\V \subset L^2(\RR)$ be a $\ZZ$-invariant subspace, and let $m^\V = m \Proj_{\T(\V)}$ be the subrepresentation of $m$ on the invariant subspace $\T(\V)$. Its projection valued measure $E_{m^\V}$ of $\T(\V)$, defined on $\Omega$, obtained by the SNAG Theorem, extended to $L^2(\Omega,\ell_2(\ZZdual))$, reads then
$$
E_{m^\V}(B) = E_m (B) \Proj_{\T(\V)} = \Proj_{\T(\V)} E_m (B) \ , \quad B \subset \Omega \ \textnormal{ Borel}
$$
where the second identity is due to invariance. Thus, combining the action of $E_m$ with (\ref{eq:RangeProjection}), we have that $E_{m^\V}$ can be explicitly written in terms of the range function of $\V$ as
$$
E_{m^\V}(B) = \int_{B}^\oplus \pj_{\J_\V(\omega)} d\omega  \ , \quad B \subset \Omega \ \textnormal{ Borel}.
$$

\subsection{Examples} \label{Sec:example}
\

The motivational examples for our setting are the crystallographic groups that split.
The beginning of the study of crystallographic, or crystal, groups is related to the problem of tiling 
$\R^d$ by rigid motions. That is, to find a closed bounded set $P$ that covers the space by rigid movements. More precisely we have the following definition.
\begin{defi}\label{defgrupcris}
A crystal group $\Gamma$ is a discrete subgroup of the group of isometries of $\R^d$ $(\Gamma\subset \mathrm{Isom}(\R^d)\,)$ that has a closed and bounded Borel section $P$, that is,
\begin{itemize}
\item[1.] $\displaystyle\bigcup_{\gamma\in\Gamma} \gamma P = \R^d.$
\item[2.] If $\gamma=\gamma'$, then $|\gamma P \cap\gamma' P | = 0 $.
\end{itemize}
\end{defi}

A fundamental theorem for crystal groups, due to Bieberbach \cite{Bie1910}, is
\begin{theo}[Bieberbach] Let $\Gamma$ be a crystal subgroup of ${\rm Isom}(\mathbb{R}^{d})$ and let us denote by ${\rm Trans}(\R^d)$ the translations of $\R^d$. Then
\begin{itemize}
\item[1.]
$\Lambda :=\Gamma \cap {\rm Trans}(\R^d)$ is a finitely generated abelian group of rank $d$ which spans ${\rm Trans}(\R^d)$
\item[2.] the linear part of the symmetries of $\Gamma$, the point group of $\Gamma$, is finite and is isomorphic to $\Gamma/\Lambda$.
\end{itemize}
\end{theo}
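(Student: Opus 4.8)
The plan is to exploit the semidirect-product structure $\mathrm{Isom}(\R^d) = \R^d \rtimes O(d)$: write an isometry as a pair $(A,a)$ acting by $x \mapsto Ax + a$, so that the linear part $\pi(A,a) = A$ is a continuous homomorphism onto $O(d)$ with $\ker \pi = \mathrm{Trans}(\R^d) \cong \R^d$. Restricting $\pi$ to the crystal group $\Gamma$, its kernel is $\Lambda = \Gamma \cap \mathrm{Trans}(\R^d)$ and, by the first isomorphism theorem, its image --- the point group --- is isomorphic to $\Gamma/\Lambda$; so conclusion (2) reduces to proving this image is finite. Two easy preliminary facts I would record first: (i) $\Lambda$, being a subgroup of the discrete group $\Gamma$, is a discrete subgroup of $\R^d$, hence a free abelian group $\Lambda \cong \Z^r$ for some $r \le d$ spanning an $r$-dimensional linear subspace $W = \mathrm{span}_\R \Lambda$; (ii) the identity $(A,a)(I,\ell)(A,a)^{-1} = (I, A\ell)$ shows that conjugation by $\Gamma$ maps $\Lambda$ into $\Gamma \cap \mathrm{Trans}(\R^d) = \Lambda$, so the point group acts on the lattice $\Lambda$ by automorphisms and, in particular, preserves $W$ and $W^\bot$.

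The heart of the proof, and the step I expect to be the real obstacle, is the finiteness of the point group. The classical tool is the commutator contraction estimate in $O(d)$: for $A, B \in O(d)$ one has $\|[A,B] - I\| \le 2\,\|A - I\|\,\|B - I\|$ in the operator norm, a consequence of $\|[A,B] - I\| = \|AB - BA\|$ together with $AB - BA = (A-I)(B-I) - (B-I)(A-I)$. If the point group were infinite it would, being a subset of the compact group $O(d)$, accumulate, producing an element $\gamma_1 = (A_1, a_1) \in \Gamma$ with $A_1 \ne I$ and $\|A_1 - I\|$ as small as we wish. Iterating the commutator with $\gamma_1$ then forces both the linear parts and --- with some bookkeeping on the affine parts, using that the bounded fundamental domain $P$ makes $\Gamma$ cocompact --- the translation parts of $[\gamma_1, [\gamma_1, \dots, \gamma_2]]$ to decay geometrically, hence to become trivial by discreteness of $\Gamma$; the Zassenhaus-type conclusion is that the subgroup of $\Gamma$ generated by the elements with linear part near $I$ is nilpotent, of finite index in $\Gamma$, and meets $\mathrm{Trans}(\R^d)$ in a full-rank lattice. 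In particular the point group misses a whole neighbourhood of $I$ in $O(d)$, so it is discrete in a compact group, hence finite. (Once $\Lambda$ is known to be a full lattice, finiteness follows more cheaply, since the point group then lies in $O(d) \cap \mathrm{Aut}(\Lambda)$, an intersection of a compact group with a discrete one.)

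With the point group finite, the rest is bookkeeping. Finiteness of $\Gamma/\Lambda$ means $\Lambda$ has finite index in $\Gamma$, so $\Lambda$ itself acts properly discontinuously and cocompactly on $\R^d$; but a discrete group of translations acting cocompactly on $\R^d$ must be a lattice of full rank, which forces $r = d$, i.e.\ $\Lambda \cong \Z^d$, spanning $\mathrm{Trans}(\R^d)$ --- this is conclusion (1). (Equivalently, $\R^d/\Lambda \cong (\R/\Z)^r \times W^\bot$ would have to admit a cocompact action of the finite group $\Gamma/\Lambda$, forcing $\dim W^\bot = 0$.) Conclusion (2) is then immediate: the point group of $\Gamma$ is $\pi(\Gamma)$, which is finite and, by the first isomorphism theorem, isomorphic to $\Gamma/\Lambda$.
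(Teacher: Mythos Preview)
The paper does not prove this theorem: it is stated as a classical result, attributed to Bieberbach with a citation to \cite{Bie1910}, and the reader is referred to \cite{Bie1910, Far1981, GS1987} for general results on crystallographic groups. There is therefore no proof in the paper to compare your proposal against.

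That said, your outline is the standard modern route to Bieberbach's first theorem, and the architecture is sound: the semidirect-product description of $\mathrm{Isom}(\R^d)$, the identification of the point group with $\pi(\Gamma)\cong\Gamma/\Lambda$, the normality of $\Lambda$ via $(A,a)(I,\ell)(A,a)^{-1}=(I,A\ell)$, and the deduction of full rank for $\Lambda$ from finiteness of the point group are all correct and cleanly stated. The commutator contraction $\|[A,B]-I\|\le 2\|A-I\|\|B-I\|$ is the right engine, and your endgame --- a discrete subgroup of the compact $O(d)$ is finite --- is exactly how one closes.

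The one place where your sketch is genuinely thin is the phrase ``with some bookkeeping on the affine parts.'' This is precisely the nontrivial content of Bieberbach's theorem: controlling the translational components under iterated commutators is where the cocompactness hypothesis does real work, and it is not automatic. Concretely, one needs to argue that elements of $\Gamma$ with linear part in a small neighbourhood of $I$ can be chosen (after multiplying by suitable elements of $\Gamma$, using the bounded fundamental domain) to have translation parts in a fixed bounded set, so that the iterated commutators have both linear and affine parts tending to zero and hence are eventually trivial by discreteness. Without this step made explicit, the Zassenhaus-type conclusion does not follow; if you intend to write this out in full, that is the paragraph to expand.
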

A subclass of the crystal groups are those that split.
\begin{defi}\label{split}
We say that a crystal group $\Gamma$  splits, if it is the  semidirect product $\Gamma = \ZZ \rtimes G$ of a finite group $G$ and a uniform lattice $\ZZ$ of $\R^d$.
\end{defi}
It can be shown that any crystal group can be embedded in a crystal group that splits, so most of the results for splitting crystal groups can be directly transferred to the general case. For general results on crystal groups see \cite{Bie1910, Far1981, GS1987}.

\vspace{2ex}

Another example, very different in nature than the crystal groups is the group of translations and shears: Consider $\RR = \R^2$, $\ZZ = \Z^2$ and $G = \{\binom{1 \ s}{0 \ 1} \, : \, s \in \Z\}$, which preserves the lattice $\Z^2$. In this case, since $G$ is infinite, the action of $\Gamma$ on $\RR$ does not admit a Borel section (see Proposition \ref{prop:Effros}), so this group is not a crystallographic group, however all results of Section \ref{Sec:structure} still hold in this case.

\section{The structure of $\Gamma$-invariant spaces}\label{Sec:structure}

In this section we study the structure of closed subspaces of $L^2(\RR)$ that are invariant under the action of $\Gamma$. Recall that $\Gamma =  \ZZ \rtimes G$ is the semidirect product of a uniform lattice $\Lambda$ in $\RR$ and a discrete and countable group $G$ that acts  on $\RR$ by continuous invertible automorphisms.
In addition, we also assume that $g\Lambda = \Lambda$ for all
$g\in G$, which implies that the Haar measure of $\RR$ is invariant under the action of $G$ (see Section \ref{Sec:preliminaries}).

A closed subspace $V \subset L^2(\RR)$ is $\Gamma$-invariant if $\tr{k} \rot{g} V \subset V$ for all $(k,g) \in \Gamma.$
Equivalently, $V$ is $\Gamma$-invariant if
$$
f \in V \ \Rightarrow \ \tr{k}f \in V \ \ \forall \ k \in \ZZ \, , \ \textnormal{and} \ \rot{g}f \in V \ \ \forall \ g \in G.
$$

\begin{rem} \label{Remark2-6}
Observe that $V$ is $\Gamma$-invariant if and only if
$$
V \ \textrm{is shift-invariant, and} \ \Pi(g)\T[V] \subset \T[V] \ \forall \ g \in G
$$
where $\Pi$ is given in Definition \ref{def:Pi}
\end{rem}

For an at most countable family $\A \subset L^2(\RR)$ , we will write
$$
S_\Gamma(\A) := \ol{\vsp}\{\tr{k}\rot{g}\varphi \, : \, k \in \ZZ, g \in G, \varphi \in \A\}.
$$
$S_\Gamma(\A)$ is a $\Gamma$-invariant space and the set $\A$ is called a set of generators. Note that, since $\tr{k}\rot{g} = \rot{g}\tr{g^{-1}k}$, we also have that
$$
S_\Gamma(\A) = \ol{\vsp}\{\rot{g}\tr{k}\varphi : \, k \in \ZZ, g \in G, \varphi \in \A\}.
$$
Since $L^2(\RR)$ is separable, if $V$ is a $\Gamma$-invariant subspace of $L^2(\RR)$, there always exists a countable set $\A \subset L^2(\RR) $ such that $V = S_\Gamma(\A).$

\begin{defi}
Let $V$ be a $\Gamma$-invariant subspace of $L^2(\RR)$. We denote by $\len(V)$, the length of $V$, the minimum number of generators of $V$:
$$
\len(V) = \min \{n: \exists\,\A = \{\varphi_1, \dots,\varphi_n\}: V = S_\Gamma(\A) \}.
$$
If $V$ does not have a finite number of generators we set $\len(V) =\infty.$
\end{defi}

Our first result is the characterization of $\Gamma$-invariant closed subspaces in terms of a covariance property of the range function associated to its $\Lambda$-invariant subspace.

\begin{theo}\label{theo:rangecovariance}
A closed subspace $V$ of $L^2(\RR)$ is $\Gamma$-invariant if and only if it is $\Lambda$-invariant (shift-invariant by $\ZZ$) and its range function $\J_V =\J$ satisfies
\begin{equation}\label{eq:rangecovariance}
\J(g^*\omega) = r_{g^{-1}}\ \J(\omega) \, , \ \textnormal{a.e.} \ \omega \in \Omega \, , \ \forall g \in G.
\end{equation}
\end{theo}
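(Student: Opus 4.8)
The plan is to combine the reformulation of $\Gamma$-invariance given in Remark \ref{Remark2-6} with the uniqueness of the measurable range function in Theorem \ref{Th:Helson}. First I would record three elementary facts. Since $\Pi$ is a unitary \emph{representation} of $G$ (Definition \ref{def:Pi}) and each $\Pi(g)$ is invertible with $\Pi(g)^{-1}=\Pi(g^{-1})$, the requirement "$\Pi(g)\T[V]\subset\T[V]$ for all $g$" of Remark \ref{Remark2-6} is equivalent to "$\Pi(g)\T[V]=\T[V]$ for all $g$". Likewise $r$ (Definition \ref{def:rpeque}) is a unitary representation of $G$ with $r_g^{-1}=r_{g^{-1}}$, and from \eqref{Eq:star} one has $g^{*}(g^{-1})^{*}=(g^{-1})^{*}g^{*}=e^{*}=\mathrm{id}$ on $\wh{\RR}$. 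Finally, because the action of $G$ preserves $\ZZdual$ and the Haar measure of $\wh{\RR}$ (Section \ref{Sec:preliminaries}), the map $\omega\mapsto g^{*}\omega$ is a measure-preserving Borel automorphism of the section $\Omega$; in particular "a.e.\ $\omega$" statements are stable under the change of variables $\omega\mapsto g^{*}\omega$.

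The core of the argument is a single computation. Assume $V$ is $\Lambda$-invariant and write $\J=\J_V$, so that $\T[V]=M_{\J}$ by Theorem \ref{Th:Helson}. For a fixed $g\in G$ define the range function $\wt{\J}_g(\omega):=r_g\,\J(g^{*}\omega)$. Its orthogonal projection onto $\wt{\J}_g(\omega)$ equals $r_g\,\pj_{\J(g^{*}\omega)}\,r_{g^{-1}}$, which is measurable in $\omega$ because $\omega\mapsto\pj_{\J(\omega)}$ is measurable and $\omega\mapsto g^{*}\omega$ is Borel; hence $\wt{\J}_g$ is a measurable range function. I claim $\Pi(g)\,M_{\J}=M_{\wt{\J}_g}$. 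Indeed, if $F\in M_{\J}$ then $F(\eta)\in\J(\eta)$ a.e., hence $F(g^{*}\omega)\in\J(g^{*}\omega)$ a.e.\ $\omega$ by measure-preservation, and by \eqref{eq:intertwinedrep} (Lemma \ref{lem:intertwining}) $(\Pi(g)F)(\omega)=r_g\big(F(g^{*}\omega)\big)\in r_g\,\J(g^{*}\omega)=\wt{\J}_g(\omega)$ a.e.; so $\Pi(g)M_{\J}\subset M_{\wt{\J}_g}$. Applying this same inclusion with $g$ replaced by $g^{-1}$ and $\J$ replaced by $\wt{\J}_g$, and using $r_{g^{-1}}r_g=\mathrm{id}$ and $g^{*}(g^{-1})^{*}=\mathrm{id}$, one gets $\Pi(g^{-1})M_{\wt{\J}_g}\subset M_{\J}$, whence $M_{\wt{\J}_g}=\Pi(g)\Pi(g^{-1})M_{\wt{\J}_g}\subset\Pi(g)M_{\J}$, and equality follows.

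The theorem is now immediate. If $V$ is $\Lambda$-invariant and \eqref{eq:rangecovariance} holds, then applying $r_g$ to \eqref{eq:rangecovariance} gives $\wt{\J}_g(\omega)=r_g\,\J(g^{*}\omega)=\J(\omega)$ a.e., so $\Pi(g)\T[V]=\Pi(g)M_{\J}=M_{\wt{\J}_g}=M_{\J}=\T[V]$ for every $g\in G$; since $V$ is also $\Lambda$-invariant, Remark \ref{Remark2-6} shows $V$ is $\Gamma$-invariant. Conversely, if $V$ is $\Gamma$-invariant then it contains $\tr{k}f$ for all $k\in\ZZ$, hence is $\Lambda$-invariant, and Remark \ref{Remark2-6} together with the first paragraph gives $\Pi(g)M_{\J}=M_{\J}$, i.e.\ $M_{\wt{\J}_g}=M_{\J}$, for all $g\in G$. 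As $\wt{\J}_g$ and $\J$ are measurable range functions with the same associated subspace, the uniqueness part of Theorem \ref{Th:Helson} (see Remark \ref{range-unicidad}) forces $\wt{\J}_g=\J$ a.e., i.e.\ $r_g\,\J(g^{*}\omega)=\J(\omega)$ a.e.\ $\omega$; applying $r_{g^{-1}}$ yields exactly \eqref{eq:rangecovariance}.

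I expect the only genuine difficulty to be the measure-theoretic bookkeeping in the second paragraph: verifying that $\omega\mapsto g^{*}\omega$ is a measure-preserving Borel automorphism of $\Omega$ (so that null sets are preserved under this substitution) and correctly tracking the contravariance in \eqref{Eq:star}, namely the identities $(g^{*})^{-1}=(g^{-1})^{*}$ and the fact that $r$ and $\Pi$ are bona fide representations of $G$ rather than anti-representations. Everything else is a formal consequence of Remark \ref{Remark2-6} and the uniqueness of the range function.
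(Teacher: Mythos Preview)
Your proposal is correct and follows essentially the same approach as the paper: both arguments introduce the auxiliary range function $\wt{\J}_g(\omega)=r_g\,\J(g^{*}\omega)$ (the paper calls it $\J_g$), establish $\Pi(g)M_{\J}=M_{\wt{\J}_g}$, and conclude via Remark \ref{Remark2-6} together with the uniqueness of the measurable range function (Remark \ref{range-unicidad}). Your presentation is slightly more economical in that you prove the identity $\Pi(g)M_{\J}=M_{\wt{\J}_g}$ once and apply it in both directions, whereas the paper argues each implication separately, but the mathematical content is the same.
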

\begin{proof}
Assume first that $V$ is $\Lambda$-invariant and satisfies 
\eqref{eq:rangecovariance}. By Theorem \ref{Th:Helson},
\begin{equation} \label{Eq:2}
  F\in \T(V) \Longleftrightarrow F(\omega) \in \J(\omega), \ a. e. \ \omega \in \Omega.
\end{equation}
By Remark \ref{Remark2-6} it is enough to show the inclusion $\Pi(g)\T(V) \subset \T(V)$ for all $g\in G.$ To show this, let $g\in G$ and $F\in \T(V).$ By \eqref{eq:intertwinedrep} and \eqref{eq:rangecovariance},
$$
\Pi(g)F(\omega) = r_g(F(g^*\omega)) \in r_g \J(g^*\omega) = \J(\omega), \ a. e. \ \omega \in \Omega.
$$
Hence, by \eqref{Eq:2}, $\Pi(g)F \in \T(V)$ as wanted.

Assume now that $V$ is $\Gamma$-invariant. By Remark \ref{Remark2-6}, $V$ is $\Lambda$-invariant and $\Pi(g)\T(V)=\T(V)$ for all $g\in G.$ By Theorem \ref{Th:Helson}, $\T(V)= M_\J.$ Observe that, for $g\in G$, since $r_g$ is a unitary operator of $\ell_2(\Lambda^\perp)$, $\J_g(\omega) := r_g(\J(g^*\omega))$ is a measurable range function. Now
$$
F\in \Pi(g)\T(V) \Longleftrightarrow \Pi(g^{-1})F \in \T(V) \Longleftrightarrow \Pi(g^{-1})F(\omega) \in \J(\omega), \ a. e. \ \omega \in \Omega.
$$
By \eqref{eq:intertwinedrep}, $F\in \Pi(g)\T(V) \Longleftrightarrow r_{g^{-1}} F((g^{-1})^*\omega)\in \J(\omega).$ Since, by \eqref{Eq:star}, $(g^{-1})^* g^* = (g g^{-1})^* = e$, we deduce
$$
F\in \Pi(g)\T(V) \Longleftrightarrow r_{g^{-1}} F(\omega) \in \J(g^* \omega) \Longleftrightarrow F(\omega) \in \J_g(\omega).
$$
Thus, $\Pi(g)\T(V)= M_{\J_g}.$ Since $\Pi(g)\T(V)=\T(V)$ we obtain $M_{\J_g} = M_\J.$ By \cite[Lemma 3.11]{CP2010}, $\J_g(\omega)= \J(\omega)\ a. e. \ \omega \in \Omega$, which is (\ref{eq:rangecovariance}).
\end{proof}

We now study the structure of the pre-Gramian and Gramian operators (see equations \eqref{Eq:pre-gramian} and \eqref{Eq:gramian}) of a set $\AG :=\{\rot{g}\varphi_i \, : \, i \in \{1,\dots,n\}, g \in G\}$, containing the $G$-orbits of the finite family $\A = \{\varphi_i\}_{i = 1}^n \subset L^2(\RR)$.

We will fix an ordering in the collection $\AG$. To do this we order the finite group $G = \{ g_1, g_2, \dots , g_\card\}$ with $g_1=e$, the identity element of the group, and then order the set $I_n\times G := \{1, 2, \dots, n\} \times G$ with the lexicographical ordering. Thus, if $(i_1, g_{j_1}), (i_2, g_{j_2}) \in I_n\times G$, we have $(i_1, g_{j_1}) < (i_2, g_{j_2})$ if and only if $i_1 < i_2$ or if $i_1 = i_2$, then $j_1 < j_2.$  In the following we will use the lexicographical ordering in $I_n\times G := \{1, 2, \dots, n\} \times G$ to simplify the notation; nevertheless, all the results hold for any other ordering.

\begin{rem}
In the rest of this paper, we will always consider a finite family $\A = \{\varphi_i\}_{i = 1}^n \subset L^2(\RR)$ and a finite group $G$. However, in case either the family $\A$, or the group $G$, or both, are countably infinite, the results of this section can be proved provided the following Bessel condition holds:
$$
\sum_{i \in \N} \sum_{k \in \Lambda} \sum_{g \in G} |\langle f, \tr{k}\rot{g}\varphi_i\rangle_{L^2(\RR)}|^2 \leq B \|f\|^2_{L^2(\RR)} \quad \forall \ f \in L^2(\RR)
$$
for some $B > 0$. Indeed, this condition is equivalent to the requirement that the pre-Gramians that we will deal with, be bounded operators. The finiteness assumptions are left in this section for both, the sake of clarity and the fact that they will hold naturally in the next sections.
\end{rem}

\begin{defi} \label{Def:lambda}
For $c : I_n \times G \to \C$, let us denote by
$$
(\lambda_g \,c)_{j,g'} = c_{j,g^{-1}g'} \ , \quad g,g' \in G, j \in I_n.
$$
\end{defi}
Note that, for each $g \in G$, we are denoting by $\lambda_g \in M_{n \card}(\C)$ the block matrix that for each $j \in I_n$ shifts the components of the vector $c(j,\cdot) \in \C^{\card}$ according to the group law of $G$. That is, $\lambda$ acts as the left regular representation of $G$ on the second variable (index) of the two-indices vector $c$, or equivalently $\lambda$ is the direct sum of $n$ copies of the left regular representation of $G$.

\begin{theo} \label{theo:preGramiancovariance}
Let $\A = \{\varphi_i\}_{i = 1}^n \subset L^2(\RR)$ be a finite family and let $\TT_{\AG}$ be the pre-Gramian of $\AG :=\{\rot{g}\varphi_i \, : \, g \in G, i \in I_n\}$. Then
\begin{equation}\label{eq:preGramiancovariance}
\TT_{\AG}(g^*\omega) = r_{g^{-1}}\TT_{\AG}(\omega)\, \lambda_g \ , \quad \textnormal{a.e. } \omega \in \Omega
\end{equation}
where $r$ is given in Definition \ref{def:rpeque} and $\lambda$ is given in Definition \ref{Def:lambda}.

Conversely, consider the set of indexes  $I_n\times G := \{1, 2, \dots, n\} \times G$ with the lexicographical order and let $\Psi = \{\psi_{i,g} \, : \, i\in I_n, g\in G \}$ be a collection of $n\card$ elements of $L^2(\RR)$. Suppose that for all $g\in G$,
\begin{equation}\label{eq:preGramiancovarianceII}
\TT_{\Psi}(g^*\omega) = r_{g^{-1}}\TT_{\Psi}(\omega)\, \lambda_g \ , \quad \textnormal{a.e. } \omega \in \Omega\,,
\end{equation}
and set $\varphi_j  = \psi_{j,e}, \,j\in I_n \,$ (with $e$ the identity in G), then $\rot{g}\varphi_j = \psi_{j,g}$ for all $(j,g) \in I_n \times G$.
\end{theo}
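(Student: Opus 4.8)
The plan is to prove both implications by identifying the columns of the pre-Gramians through Lemma~\ref{lem:intertwining} and matching them. Two preliminary facts will be used repeatedly and I would record them first: on the one hand, $r$ is a representation of $G$ on $\ell_2(\ZZdual)$, i.e.\ $r_{g}r_{h}=r_{gh}$, which follows immediately from the composition rule \eqref{Eq:star}; on the other hand, right multiplication by $\lambda_{g}$ permutes columns, since $\lambda_{g}$ sends the standard basis vector $e_{(i,g')}$ of $\C^{n\card}$ to $e_{(i,gg')}$, so that the $(i,g')$-column of $\TT_{\AG}(\omega)\lambda_{g}$ is the $(i,gg')$-column of $\TT_{\AG}(\omega)$. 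Both are one-line verifications.

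For the direct statement, by Definition~\ref{def:gram} the $(i,g')$-column of $\TT_{\AG}(\omega)$ is $\T[\rot{g'}\varphi_{i}](\omega)$, which by Lemma~\ref{lem:intertwining} (with $k=0$) equals $r_{g'}\T[\varphi_{i}]((g')^{*}\omega)$. Evaluating at $g^{*}\omega$ and invoking \eqref{Eq:star} in the form $(g')^{*}g^{*}=(gg')^{*}$, the $(i,g')$-column of the left-hand side of \eqref{eq:preGramiancovariance} becomes $r_{g'}\T[\varphi_{i}]((gg')^{*}\omega)$; by the column-permutation remark, the $(i,g')$-column of the right-hand side is $r_{g^{-1}}\T[\rot{gg'}\varphi_{i}](\omega)=r_{g^{-1}}r_{gg'}\T[\varphi_{i}]((gg')^{*}\omega)=r_{g'}\T[\varphi_{i}]((gg')^{*}\omega)$, the last step using $r_{g^{-1}}r_{gg'}=r_{g'}$. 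Since the columns agree for a.e.\ $\omega$ and every $(i,g')$, this gives \eqref{eq:preGramiancovariance}.

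For the converse, assuming \eqref{eq:preGramiancovarianceII}, the same column identification yields, for all $g,g'\in G$, $i\in I_{n}$, and a.e.\ $\omega\in\Omega$,
$$
\T[\psi_{i,g'}](g^{*}\omega)=r_{g^{-1}}\,\T[\psi_{i,gg'}](\omega).
$$
Specializing to $g'=e$ and using $\varphi_{i}=\psi_{i,e}$ gives $\T[\varphi_{i}](g^{*}\omega)=r_{g^{-1}}\T[\psi_{i,g}](\omega)$; applying $r_{g}$, using $r_{g}r_{g^{-1}}=\mathrm{id}$ and Lemma~\ref{lem:intertwining} once more, we obtain $\T[\rot{g}\varphi_{i}](\omega)=r_{g}\T[\varphi_{i}](g^{*}\omega)=\T[\psi_{i,g}](\omega)$ for a.e.\ $\omega$. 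Since $\T$ is an isometric isomorphism (Lemma~\ref{lem:gram}), this forces $\rot{g}\varphi_{i}=\psi_{i,g}$ in $L^{2}(\RR)$ for every $(i,g)\in I_{n}\times G$, which is the claim.

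The argument involves no serious obstacle; the one thing that requires care is the bookkeeping of the $G$-action, in particular the order reversal in \eqref{Eq:star} (so that $(g')^{*}g^{*}=(gg')^{*}$ and $r_{g^{-1}}r_{gg'}=r_{g'}$ are applied in the correct order) and its compatibility with the lexicographic ordering of $I_{n}\times G$, which is exactly what makes right multiplication by $\lambda_{g}$ shift the $G$-index by $g$ on the left. One should also keep in mind that all identities are understood a.e.\ in $\omega\in\Omega$, and that the finiteness of $\A$ and $G$ guarantees the pre-Gramians are genuine bounded-operator-valued fields, so every expression above is well defined.
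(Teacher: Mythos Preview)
Your proof is correct and follows essentially the same route as the paper's own argument: both rely on Lemma~\ref{lem:intertwining} together with the definition of $\lambda_g$ to match the two sides, and both conclude the converse by invoking that $\T$ is an isomorphism. The only cosmetic difference is that the paper applies both operators to a generic vector $c\in\C^{n\card}$ and performs the change of summation index $g''=gg'$, whereas you compare columns directly (equivalently, evaluate on the standard basis vectors $e_{(i,g')}$); these are the same computation.
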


\begin{proof}
Note first that the composition of operators in (\ref{eq:preGramiancovariance}) is well defined. Indeed:
\begin{equation*}
\lambda_g : \C^{n \card} \to \C^{ n \card}, \quad
\TT_{\AG}(\omega)  : \C^{n \card} \to \ell_2(\ZZdual), \quad
r_{g^{-1}}\  : \ell_2(\ZZdual) \to \ell_2(\ZZdual).
\end{equation*}
Let then $c : I_n\times G \to \C$. By Definition \ref{def:gram}, and using Lemma \ref{lem:intertwining}, we get
\begin{align*}
\TT_{\AG}(g^*\omega)c & = \sum_{j = 1}^n \sum_{g' \in G} \T[\rot{g'}\varphi_j](g^*\omega)\,c_{j,g'}\\
& = \sum_{j = 1}^n \sum_{g' \in G} r_{g^{-1}}\T[\rot{g g'}\varphi_j] (\omega)\, c_{j,g'} = \sum_{j = 1}^n \sum_{g'' \in G} r_{g^{-1}}\T[\rot{g''}\varphi_j] (\omega)\, c_{j,g^{-1}g''}\\
& = \sum_{j = 1}^n \sum_{g'' \in G} r_{g^{-1}}\T[\rot{g''}\varphi_j] (\omega)\, (\lambda_g \,c)_{j,g''} = r_{g^{-1}} \TT_{\AG}(\omega) \lambda_g c \, .
\end{align*}
To prove the converse statement observe first that, by similar arguments to the ones above, we have
\begin{align*}
r_{g^{-1}}\TT_{\Psi}(\omega) \lambda_g c & = \sum_{j = 1}^n \sum_{g'' \in G} r_{g^{-1}}\T[\psi_{j,g''}] (\omega) c_{j,g^{-1}g''}\\
& = \sum_{j = 1}^n \sum_{g' \in G} r_{g^{-1}}\T[\psi_{j,gg'}] (\omega) c_{j,g'}
\end{align*}
and
\begin{align*}
\TT_{\Psi}(g^*\omega)c & = \sum_{j = 1}^n \sum_{g' \in G} \T[\psi_{j,g'}](g^*\omega)\,c_{j,g'} = \sum_{j = 1}^n \sum_{g' \in G} r_{g^{-1}}\T[\rot{g}\psi_{j,g'}](\omega)\,c_{j,g'}\,.
\end{align*}
Therefore, if (\ref{eq:preGramiancovarianceII}) holds, then
$$
\T[\psi_{j,gg'}] (\omega) = \T[\rot{g}\psi_{j,g'}](\omega) \ , \quad \forall j,g,g' \, , \ \textnormal{a.e. } \omega \in \Omega.
$$
Thus, since $\T$ is an isomorphism (see Lemma \ref{lem:gram}), $\psi_{j,gg'} = \rot{g}\psi_{j,g'}\,.$ Choose $\varphi_j = \psi_{j,\id}$, where $\id$ is the identity element of $G$. Then, $\rot{g}\varphi_j = \rot{g}\psi_{j,e} = \psi_{j,g},$ as wanted.  
\end{proof}

\begin{cor}\label{cor:Gramcovariance}
Let $\A = \{\varphi_i\}_{i = 1}^n \subset L^2(\RR)$ be a finite family and let $\G_{\AG}$ be the Gramian of $\AG = \{\rot{g}\varphi_i  : (i,g) \in I_n \times G\}$, see Definition \ref{def:gramian}. Then
\begin{equation}\label{eq:Gramiancovariance}
\G_{\AG}(g^*\omega) = \lambda_{g^{-1}}\G_{\AG}(\omega)\, \lambda_g .
\end{equation}
\end{cor}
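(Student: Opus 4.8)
The corollary follows immediately by combining the definition of the Gramian with the pre-Gramian covariance identity of Theorem \ref{theo:preGramiancovariance}. The plan is to write $\G_{\AG} = \TT_{\AG}^*\TT_{\AG}$, substitute the formula $\TT_{\AG}(g^*\omega) = r_{g^{-1}}\TT_{\AG}(\omega)\lambda_g$ into both factors, and simplify using the unitarity of the representations $r$ and $\lambda$.

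More explicitly, I would first recall from Definition \ref{def:gramian} that $\G_{\AG}(\omega) = \TT_{\AG}^*(\omega)\TT_{\AG}(\omega)$ for a.e.\ $\omega \in \Omega$. Applying this at the point $g^*\omega$ and inserting \eqref{eq:preGramiancovariance}, I get
$$
\G_{\AG}(g^*\omega) = \big(r_{g^{-1}}\TT_{\AG}(\omega)\lambda_g\big)^* \big(r_{g^{-1}}\TT_{\AG}(\omega)\lambda_g\big) = \lambda_g^* \TT_{\AG}^*(\omega) r_{g^{-1}}^* r_{g^{-1}} \TT_{\AG}(\omega)\lambda_g.
$$
Since $r : G \to \U(\ell_2(\ZZdual))$ is a unitary representation (Definition \ref{def:rpeque}), $r_{g^{-1}}^* r_{g^{-1}} = \mathrm{Id}$, which collapses the middle to $\TT_{\AG}^*(\omega)\TT_{\AG}(\omega) = \G_{\AG}(\omega)$. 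Likewise $\lambda : G \to M_{n\card}(\C)$ is unitary — being a direct sum of $n$ copies of the left regular representation of the finite group $G$ (the remark following Definition \ref{Def:lambda}) — so $\lambda_g^* = \lambda_g^{-1} = \lambda_{g^{-1}}$. Substituting this gives $\G_{\AG}(g^*\omega) = \lambda_{g^{-1}}\G_{\AG}(\omega)\lambda_g$, which is exactly \eqref{eq:Gramiancovariance}.

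There is essentially no obstacle here: the only points that need to be checked are that both $r_g$ and $\lambda_g$ are genuinely unitary (so that $r_{g^{-1}}^*r_{g^{-1}} = \mathrm{Id}$ and $\lambda_g^* = \lambda_{g^{-1}}$), both of which are built into the definitions, and that the composition of operators makes sense dimensionally, which was already verified in the proof of Theorem \ref{theo:preGramiancovariance}. One small bookkeeping point worth stating explicitly is that $\lambda$ being the left regular representation means $\lambda_g^{-1} = \lambda_{g^{-1}}$ directly from $(\lambda_g\lambda_{g^{-1}}c)_{j,g'} = (\lambda_{g^{-1}}c)_{j,g^{-1}g'} = c_{j,g'}$, so no separate argument for unitarity of $\lambda$ is strictly needed beyond the group law. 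The proof is a two-line computation once \eqref{eq:preGramiancovariance} is in hand.
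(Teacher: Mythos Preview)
Your proof is correct and follows essentially the same approach as the paper: substitute the pre-Gramian covariance identity \eqref{eq:preGramiancovariance} into $\G_{\AG}(g^*\omega) = \TT_{\AG}^*(g^*\omega)\TT_{\AG}(g^*\omega)$ and simplify using the unitarity of $r$ and $\lambda$. Your version is slightly more explicit about why $\lambda_g^* = \lambda_{g^{-1}}$, but otherwise the arguments are identical.
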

\begin{proof}
By definition of Gramian $\G_{\AG}= \TT_{\AG}^*\TT_{\AG}$, and using (\ref{eq:preGramiancovariance}), we have
\begin{align*}
\G_{\AG}(g^*\omega) & = \TT_{\AG}^*(g^*\omega)\TT_{\AG}(g^*\omega) = \lambda_g^*\,\TT_{\AG}(\omega)^*r_{g^{-1}}^*r_{g^{-1}}(\TT_{\AG}(\omega))\,\lambda_g.
\end{align*}
The proof is concluded by noting that, since $r$ and $\lambda$ are unitary homomorphisms, $r_{g^{-1}} = r_g^{-1} = r_g^* \ \forall g \in G$, and the same is true for $\lambda$.
\end{proof}

Using these results we give in the next theorem an explicit construction of a Parseval frame of orbits for a finitely generated $\Gamma$-invariant space. Existence of Parseval frames of orbits of spaces invariant under unitary representations of discrete groups has been proved in \cite[Corollary 26]{BHP2018II}. The techniques introduced in the present setting differ from the ones used in \cite{BHP2018II}, due to the special nature of $\Gamma = \ZZ \rtimes G$ and the fact that $\ZZ$ is an abelian lattice of $\RR$.

\begin{theo}\label{theo:Parseval}
Let $\A = \{\varphi_i\}_{i = 1}^n\subset L^2(\RR)$ be a finite family. Then there exists a finite family with the same cardinality $\Psi = \{\psi_i\}_{i = 1}^n\subset L^2(\RR)$ such that
\begin{itemize}
\item[1.] $S_\Gamma(\A) = S_\Gamma(\Psi)$
\item[2.] $\{\tr{k}\rot{g}\psi_i \, : \, k \in \ZZ, g \in G, i \in \{1,\dots,n\}\}$ is a Parseval frame for $S_\Gamma(\A)$
\end{itemize}
\end{theo}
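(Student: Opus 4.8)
\emph{Proof plan.} The idea is to realize $S_\Gamma(\A)$ as an ordinary shift-invariant space, apply to it the classical fiberwise construction of a canonical Parseval frame, and then use the covariance relations of Theorem \ref{theo:preGramiancovariance} and Corollary \ref{cor:Gramcovariance} to guarantee that the generators produced by this construction are again a $G$-orbit, so that only $n$ of them are really needed. Observe first that, since for fixed $g,i$ the set $\{\tr{k}\rot{g}\varphi_i : k \in \ZZ\}$ consists precisely of the $\ZZ$-translates of $\rot{g}\varphi_i$, one has $S_\Gamma(\A) = S(\AG)$ with $\AG = \{\rot{g}\varphi_i : g \in G, i \in I_n\}$; write $V := S_\Gamma(\A)$ and let $\J = \J_V$ be its measurable range function, so that by Theorem \ref{Th:Helson} one has $\J(\omega) = \ol{\vsp}\{\T[\rot{g}\varphi_i](\omega)\} = \mathrm{range}\,\TT_{\AG}(\omega)$ for a.e. $\omega$.

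Next I would set, for a.e. $\omega \in \Omega$,
\[
\TT_{\Psi^{\textsc g}}(\omega) := \TT_{\AG}(\omega)\,\bigl(\G_{\AG}(\omega)\bigr)^{\dagger/2},
\]
where $(\G_{\AG}(\omega))^{\dagger/2}$ denotes the positive square root of the Moore--Penrose pseudo-inverse of the positive semidefinite $(n\card)\times(n\card)$ matrix $\G_{\AG}(\omega)$; expanding $\TT_{\AG}(\omega)$ in singular-value form shows that $\TT_{\Psi^{\textsc g}}(\omega)$ is exactly the partial isometry occurring in the polar decomposition of $\TT_{\AG}(\omega)$, with initial space $(\ker\TT_{\AG}(\omega))^\perp$ and final space $\mathrm{range}\,\TT_{\AG}(\omega) = \J(\omega)$. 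In particular $\|\TT_{\Psi^{\textsc g}}(\omega)\| \le 1$ and every column of $\TT_{\Psi^{\textsc g}}(\omega)$ has norm at most $1$. Since $\G_{\AG} \in L^1(\Omega,\C^{n\card\times n\card})$ and pseudo-inversion and the positive square root are Borel maps on positive semidefinite matrices, $\omega \mapsto \TT_{\Psi^{\textsc g}}(\omega)$ is measurable, and because $\Omega$ is a Borel section of the compact group $\wh{\RR}/\ZZdual \approx \wh{\ZZ}$ it has finite measure; hence each column $\omega \mapsto \TT_{\Psi^{\textsc g}}(\omega)e_{i,g}$ lies in $L^2(\Omega,\ell_2(\ZZdual))$ and, by Lemma \ref{lem:gram}, equals $\T[\psi_{i,g}]$ for a unique $\psi_{i,g} \in L^2(\RR)$. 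Put $\Psi^{\textsc g} := \{\psi_{i,g}\}_{(i,g)\in I_n\times G}$, ordered lexicographically.

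Three verifications then finish the proof. \emph{(i)} Since $\mathrm{range}\,\TT_{\Psi^{\textsc g}}(\omega) = \mathrm{range}\,\TT_{\AG}(\omega) = \J(\omega)$ a.e., Theorem \ref{Th:Helson} yields $S(\Psi^{\textsc g}) = V$. \emph{(ii)} The Gramian of $\Psi^{\textsc g}$ is
\[
\G_{\Psi^{\textsc g}}(\omega) = \TT_{\Psi^{\textsc g}}^*(\omega)\,\TT_{\Psi^{\textsc g}}(\omega) = \bigl(\G_{\AG}(\omega)\bigr)^{\dagger/2}\,\G_{\AG}(\omega)\,\bigl(\G_{\AG}(\omega)\bigr)^{\dagger/2},
\]
which is the orthogonal projection onto $\mathrm{range}\,\G_{\AG}(\omega)$; thus for a.e. $\omega$ the vectors $\{\T[\psi_{i,g}](\omega)\}$ form a Parseval frame of $\J(\omega)$, and by the fiberwise characterization of Parseval frames of translates (see \cite{CP2010}) the set $\{\tr{k}\psi_{i,g} : k \in \ZZ,\,(i,g)\in I_n\times G\}$ is a Parseval frame of $S(\Psi^{\textsc g}) = V$. \emph{(iii)} Using \eqref{eq:preGramiancovariance}, \eqref{eq:Gramiancovariance}, the identity $\lambda_g\lambda_{g^{-1}} = \lambda_{\id}$, and the fact that conjugation by the unitary $\lambda_g$ commutes with pseudo-inversion and square roots (so that $(\G_{\AG}(g^*\omega))^{\dagger/2} = \lambda_{g^{-1}}(\G_{\AG}(\omega))^{\dagger/2}\lambda_g$), one obtains for a.e. $\omega$
\[
\TT_{\Psi^{\textsc g}}(g^*\omega) = r_{g^{-1}}\TT_{\AG}(\omega)\,\lambda_g\,\lambda_{g^{-1}}\bigl(\G_{\AG}(\omega)\bigr)^{\dagger/2}\lambda_g = r_{g^{-1}}\TT_{\Psi^{\textsc g}}(\omega)\,\lambda_g,
\]
which is precisely hypothesis \eqref{eq:preGramiancovarianceII}. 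Hence the converse part of Theorem \ref{theo:preGramiancovariance} applies: setting $\psi_i := \psi_{i,\id}$ we get $\rot{g}\psi_i = \psi_{i,g}$ for all $(i,g)$, so $\Psi := \{\psi_i\}_{i=1}^n$ has $n$ elements, $\Psi^{\textsc g}$ is its $G$-orbit, $S_\Gamma(\Psi) = S(\Psi^{\textsc g}) = V = S_\Gamma(\A)$, and the Parseval frame found in (ii) is exactly $\{\tr{k}\rot{g}\psi_i : k \in \ZZ, g \in G, i \in I_n\}$.

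The step I expect to demand the most care is the one carried out in the second paragraph: verifying that the fiberwise assignment $\omega \mapsto \TT_{\AG}(\omega)(\G_{\AG}(\omega))^{\dagger/2}$ is genuinely measurable (hence arises from honest functions in $L^2(\RR)$), and that it interacts correctly with the $G$-action. The latter is where the covariance identities \eqref{eq:preGramiancovariance} and \eqref{eq:Gramiancovariance}, combined with the commutation of the functional calculus on $\G_{\AG}$ with the unitary conjugations $\lambda_g$, do the essential work.
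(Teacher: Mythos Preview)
Your proposal is correct and follows essentially the same approach as the paper: define $Q(\omega) = \TT_{\AG}(\omega)\bigl(\G_{\AG}(\omega)^+\bigr)^{1/2}$, verify that $Q^*Q$ is the projection onto $\mathrm{Range}(\G_{\AG}(\omega))$ so that the columns give a fiberwise Parseval frame, and then use the covariance identities \eqref{eq:preGramiancovariance}, \eqref{eq:Gramiancovariance} (together with functional calculus commuting with unitary conjugation) to obtain \eqref{eq:preGramiancovarianceII} and apply the converse part of Theorem~\ref{theo:preGramiancovariance}. Your write-up is somewhat more explicit about measurability and about the identification $S_\Gamma(\A)=S(\AG)$, but the argument is the same.
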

\begin{proof}
For a positive semidefinite Hermitian matrix $\G$, let us denote by $\G^+$ its Moore-Penrose pseudoinverse. Recall that
$$
\G^+ \G = \G \G^+  = \Proj_{\textnormal{Range}(\G)}.
$$
Since $\G^+$ commutes with $\G$, $(\G^+)^\frac12$ also commutes with $\G$, and hence
\begin{equation} \label{Eq:pseudoinverse}
(\G^+)^\frac12 \G (\G^+)^\frac12 = \G \G^+ = \Proj_{\textnormal{Range}(\G)}.
\end{equation}
For $\AG = \{\rot{g}\varphi_i : (i,g) \in I_n \times G\}$ define
$$
Q(\omega) := \TT_{\AG}(\omega)(\G_{\AG}(\omega)^+)^\frac12 \in \B(\C^{n \card},\ell_2(\ZZdual))\,,
$$
where $\TT_{\AG}$ and $\G_{\AG}$ are the pre-Gramian and Gramian, respectively, of $\AG$ (see Section \ref{Sec:fundamental} for the definition).
Then
\begin{align} \label{Eq:columns}
Q(\omega)^*Q(\omega) & = (\G_{\AG}(\omega)^+)^\frac12 \, \TT_{\AG}(\omega)^* \, \TT_{\AG}(\omega) \, (\G_{\AG}(\omega)^+)^\frac12\nonumber\\
& = (\G_{\AG}(\omega)^+)^\frac12 \, \G_{\AG}(\omega) \, (\G_{\AG}(\omega)^+)^\frac12 = \Proj_{\textnormal{Range}(\G_{\AG}(\omega))}.
\end{align}
For $j=1,2, \dots n\card\,,$ let $q_j(\omega) = Q(\omega)\delta_j$ be the columns of the matrix $Q(\omega)$, where $\{\delta_j\}_{j = 1,\dots,n\card}$ stands for the canonical basis of $\C^{n \card}.$ The vectors $q_j(\omega)$ belong to $\ell_2(\ZZdual)$ since for $c \in \C^{n \card}$

\begin{align*}
\int_\Omega \| Q(\omega) c\|_{\ell_2(\ZZdual)}^2 d\omega & =
\int_{\Omega }\langle Q(\omega) c, Q(\omega) c\rangle_{\ell_2(\ZZdual)}d\omega\\
& = \int_{\Omega }\langle c, \Proj_{\textnormal{Range}(\G_{\AG}(\omega))} c\rangle_{\ell_2(\ZZdual)}d\omega \leq 
 \|c\|^2 |\Omega| < \infty .
\end{align*}
Write  $\Theta(\omega) = \{q_j(\omega)\}_{j=1}^{n \card}.$ The Gram matrix of $\Theta(\omega)$ is $Q(\omega)^*Q(\omega)$. Therefore, by \eqref{Eq:columns} and \cite[Lemma 5.5.4]{Chr2003} (see also \cite[Corollary 7]{BHP2018}) we conclude that $\Theta(\omega)$ is a Parseval frame sequence in $\ell_2(\ZZdual).$
Moreover, the columns $\{q_j(\omega)\}_{j=1}^{n \card}$ of $Q(\omega)$ belong to $\J_{S_\Gamma(\A)}(\omega)$ because they are obtained as finite linear combinations (its coefficients are elements of $(\G_{\AG}(\omega)^+)^\frac12$) of the columns of $\TT_{\AG}(\omega)$, and these columns, which are $\T[\rot{g}\varphi_i](\omega)$, belong to $\J_{S_\Gamma(\A)}.$

Thus, by \cite[Theorem 4.1]{CP2010}, $\phi_j = \T^{-1}[q_j]$ are such that $\{\tr{k}\phi_j \, : \, k \in \ZZ, j = 1, \dots, n  \card \}$ form a Parseval frame sequence of $S_\Gamma(\A)$. Moreover, by \eqref{eq:preGramiancovariance}, \eqref{eq:Gramiancovariance}, and elementary functional calculus,
\begin{align*}
Q(g^*\omega) & = \TT_{\AG}(g^*\omega)(\G_{\AG}(g^*\omega)^+)^\frac12 = r_{g^{-1}}\TT_{\AG}(\omega)\,\lambda_g \, \lambda_{g^{-1}}(\G_{\AG}(\omega)^+)^\frac12 \,\lambda_g\\
& = r_{g^{-1}} Q(\omega)\, \lambda_g\,.
\end{align*}
Therefore, $Q$, which is the pre-Gramian of $\{\phi_j = \T^{-1}[q_j]\}_j^{n\card},$ satisfies (\ref{eq:preGramiancovarianceII}). By Theorem \ref{theo:preGramiancovariance}, there exist $\Psi = \{\psi_i\}_{i = 1}^n\subset L^2(\RR)$ such that $\rot{g}\psi_i = \phi_{i,g}$ for all $(i,g) \in I_n \times G$. Hence, $S_\Gamma(\A) = S_\Gamma(\Psi)$ and $\{\tr{k} \rot{g} \psi_i: k\in \ZZ, g\in G, \ i=1, \dots, n  \} = \{\tr{k}\phi_j \, : \, k \in \ZZ, j = 1, \dots, n \card \}$ form a Parseval frame of $S_\Gamma(\A).$
\end{proof}

\begin{cor}
Let $\A = \{\varphi_i\}_{i = 1}^n\subset L^2(\RR)$ be a finite family. Then there exists a finite family $\Psi = \{\psi_i\}_{i = 1}^\ell\subset L^2(\RR)$ with the same cardinality or less  such that
\begin{itemize}
\item[1.] $\{\tr{k} \rot{g} \psi_i: k\in \ZZ, g\in G\}$ is a Parseval frame of $S_\Gamma(\psi_i)$ for all $i=1,\dots, \ell$
\item[2.] $S_\Gamma(\psi_i) \, \bot \, S_\Gamma(\psi_j)$ for $i \neq j$
\item[3.] $S_\Gamma(\A) = \bigoplus_{i = 1}^\ell S_\Gamma(\psi_i)$.
\end{itemize}

\end{cor}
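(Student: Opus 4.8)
The plan is to combine a Gram--Schmidt-type orthogonalization carried out at the level of \emph{principal} $\Gamma$-invariant subspaces with Theorem \ref{theo:Parseval} applied to single generators. I would not need to first normalize $\A$ via Theorem \ref{theo:Parseval}; the orthogonalization works on the raw family.

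\emph{Orthogonalization.} From $\A = \{\varphi_i\}_{i=1}^n$ I would build pairwise orthogonal, $\Gamma$-invariant, singly generated subspaces whose sum is $S_\Gamma(\A)$. Put $W_1 = S_\Gamma(\varphi_1)$ and, for $j = 2,\dots,n$, define inductively
$$
\wt{\varphi}_j = \varphi_j - \Proj_{W_1 \oplus \dots \oplus W_{j-1}}\varphi_j\,, \qquad W_j = S_\Gamma(\wt{\varphi}_j).
$$
The subspace $W_1 \oplus \dots \oplus W_{j-1}$ is closed (a finite orthogonal sum of closed subspaces) and $\Gamma$-invariant, so the orthogonal projection onto it commutes with every unitary $\tr{k}\rot{g}$; hence $\tr{k}\rot{g}\wt{\varphi}_j = (I - \Proj_{W_1 \oplus \dots \oplus W_{j-1}})\,\tr{k}\rot{g}\varphi_j$ is orthogonal to $W_1 \oplus \dots \oplus W_{j-1}$ for every $(k,g) \in \Gamma$, which forces $W_j \perp W_i$ for all $i < j$. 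A short induction gives $\varphi_j \in W_1 \oplus \dots \oplus W_j$ and $\wt{\varphi}_j \in S_\Gamma(\A)$ for each $j$, so $S_\Gamma(\A) = \bigoplus_{j=1}^n W_j$, the $W_j$ being pairwise orthogonal $\Gamma$-invariant subspaces.

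\emph{Conclusion.} I would then discard the indices $j$ with $\wt{\varphi}_j = 0$ and relabel the remaining subspaces as $W_1,\dots,W_\ell$ (so $\ell \leq n$), still with $S_\Gamma(\A) = \bigoplus_{i=1}^\ell W_i$. Applying Theorem \ref{theo:Parseval} to each singleton family $\{\wt{\varphi}_i\}$ produces $\psi_i \in L^2(\RR)$ with $S_\Gamma(\psi_i) = S_\Gamma(\wt{\varphi}_i) = W_i$ and with $\{\tr{k}\rot{g}\psi_i : k \in \ZZ,\, g \in G\}$ a Parseval frame of $S_\Gamma(\psi_i)$. Then $\Psi = \{\psi_i\}_{i=1}^\ell$ gives (1) by construction, and (2)--(3) because $S_\Gamma(\psi_i) = W_i$ and the $W_i$ are pairwise orthogonal with direct sum $S_\Gamma(\A)$.

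Once Theorem \ref{theo:Parseval} is in hand this is essentially bookkeeping; the only nontrivial point is the commutation of $\Proj_{W_1 \oplus \dots \oplus W_{j-1}}$ with the \emph{full} representation $(k,g)\mapsto \tr{k}\rot{g}$ of $\Gamma$ (not just with the shifts $\tr{k}$), which is what makes each orthogonal summand in the recursion $\Gamma$-invariant rather than merely $\ZZ$-invariant. I expect this to be the only place where care is needed, and it is handled by the standard fact that the projection onto a reducing subspace of a unitary representation commutes with that representation.
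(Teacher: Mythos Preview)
Your proof is correct and follows essentially the same route as the paper: a Gram--Schmidt orthogonalization at the level of principal $\Gamma$-invariant subspaces, combined with Theorem \ref{theo:Parseval} applied to single generators. The only cosmetic differences are that the paper interleaves the application of Theorem \ref{theo:Parseval} with the orthogonalization steps (and includes a redundant projection onto $S_\Gamma(\varphi_1,\dots,\varphi_j)$), whereas you orthogonalize first and normalize at the end; your presentation is arguably cleaner.
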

\begin{proof}
Let $\psi_1$ be the Parseval frame generator for $S_\Gamma(\varphi_1)$ constructed as in Theorem \ref{theo:Parseval}. Let then
$$
\phi_j = \Proj_{S_\Gamma(\varphi_1,\dots, \varphi_j)} \Proj_{S_\Gamma(\varphi_1)^\bot}\varphi_j, 2 \leq j \leq n.
$$
Let $n_2 = \min \{2\leq j\leq n: \phi_j \not=0\}$ and define  $\varphi_2' = \phi_{n_2}$.
Let $\psi_2$ again be the Parseval frame generator for $S_\Gamma(\varphi_2')$ constructed as in Theorem \ref{theo:Parseval}. 
Then
$$
S_\Gamma(\varphi_1,\dots, \varphi_{n_2}) = S_\Gamma(\varphi_1)\oplus S_\Gamma(\varphi_2') = S_\Gamma(\psi_1)\oplus S_\Gamma(\psi_2).
$$
Proceeding by induction, we get the whole family $\{\psi_i\}_{i = 1}^\ell$.
\end{proof}

\section{Borel sections and $\Gamma$-invariant range functions}\label{Sec:Section}
In this section we give conditions for the existence of Borel sections for the action of $\ZZdual \rtimes\, G$ on $\wh{\RR}$. When such a Borel section exists, we define a  $\Gamma$-invariant range function that characterizes $\Gamma$-invariant spaces.

\subsection{Borel sections}
If the group $G$ is finite, the present setting acquires an additional structure, that is discussed in this section and will be used in Section \ref{sec:approx}. We recall in passing that the finiteness of $G$, for discrete semidirect products $\Gamma = \ZZ \rtimes G$ with abelian normal component $\ZZ$, characterizes the type-I groups (see \cite[Theorem 7.8]{Folland2015}). Moreover, this assumption is satisfied by several relevant examples, most notably crystallographic groups, see Section \ref{Sec:example}. 

We first observe the following general fact involving those groups $\RR$ which are connected. Recall that, in this case, $\RR$ is isomorphic to $\R^n \times C$ for some $n \geq 0$ and $C$ a compact connected group (see e.g. \cite[Theorem 26]{Morris1977}).
\begin{prop}\label{prop:stabilizers}
Let $\RR$ be a second countable LCA group, let $\ZZ$ be a uniform lattice subgroup of $\RR$, let $G$ be a discrete and countable group of automorphisms of $\RR$ that preserves $\ZZ$, i.e. such that $g\ZZ = \ZZ$ for all $g \in G$ and let $\Gamma = \ZZ \rtimes G$.
If $\RR$ is connected, and if the action of $G$ on $\RR$ is faithful, then, for almost every $x \in \RR$, we have that 
$$
\textnormal{stab}_\Gamma(x) := \{(k,g) \in \ZZ \rtimes G : g x + k = x\} = \{(0,\id)\}.
$$
\end{prop}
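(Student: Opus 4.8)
The plan is to show that the set $B = \{x \in \RR : \textnormal{stab}_\Gamma(x) \neq \{(0,\id)\}\}$ of points with nontrivial stabilizer has Haar measure zero, by exhibiting it as a countable union of null sets. First I would note that $(0,\id)$ always lies in $\textnormal{stab}_\Gamma(x)$, so $B = \bigcup_\gamma A_\gamma$, where the union runs over $\gamma = (k,g) \in \Gamma$ with $\gamma \neq (0,\id)$ and $A_\gamma := \{x \in \RR : gx + k = x\}$. Since $\RR$ is second countable it is metrizable, so the discrete subgroup $\ZZ$ is countable, and as $G$ is countable by hypothesis the group $\Gamma = \ZZ \rtimes G$ is countable; hence it suffices to prove $|A_\gamma| = 0$ for each fixed $\gamma \neq (0,\id)$.

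Next I would analyze $A_\gamma$ for a fixed $\gamma = (k,g) \neq (0,\id)$. If $g = \id$, then necessarily $k \neq 0$ and $A_\gamma = \emptyset$. If $g \neq \id$, I would introduce the continuous endomorphism $\phi_g \colon \RR \to \RR$ given by $\phi_g(x) = gx - x$; this is a homomorphism precisely because $g$ is an automorphism of $\RR$. Then $A_\gamma = \phi_g^{-1}(\{-k\})$ is closed and is either empty or a single coset $x_0 + \ker\phi_g$ of the closed subgroup $\ker\phi_g$, so by translation invariance of the Haar measure $|A_\gamma|$ equals either $0$ or $|\ker\phi_g|$. Thus everything reduces to showing $|\ker\phi_g| = 0$.

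Finally, I would observe that $\ker\phi_g$ is a \emph{proper} closed subgroup of $\RR$: if it were all of $\RR$, then $g$ would act as the identity automorphism, contradicting faithfulness of the $G$-action together with $g \neq \id$. I would then invoke the standard fact that a measurable subgroup of a locally compact group having positive Haar measure must be open (a compact $K \subseteq \ker\phi_g$ of positive measure has $K - K$ containing a neighborhood of $0$, so $\ker\phi_g$ is open); an open subgroup is also closed, so connectedness of $\RR$ would force $\ker\phi_g = \RR$, a contradiction. Hence $|\ker\phi_g| = 0$, and the proof is complete. The only nontrivial point is this last step---ruling out a proper closed subgroup of positive measure---which is exactly where the hypothesis that $\RR$ is connected (equivalently, $\RR \cong \R^n \times C$) is used; everything else is bookkeeping.
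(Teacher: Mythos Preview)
Your proof is correct and follows essentially the same approach as the paper: decompose the bad set as a countable union over $\gamma \neq (0,\id)$ of the fixed-point sets $A_\gamma$, reduce to showing the subgroup $\ker\phi_g = A(0,g)$ has measure zero, and use the Steinhaus--Weil theorem together with connectedness and faithfulness to rule out positive measure. The only cosmetic differences are that you separate the case $g = \id$ explicitly at the outset and phrase the reduction via the homomorphism $\phi_g$ and its cosets, whereas the paper argues directly by contradiction on $A(k,g)$ and observes $A(k,g) - A(k,g) \subset A(0,g)$; the substance is identical.
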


\begin{proof}
Let $E = \{x \in \RR : \textnormal{stab}_\Gamma(x) \neq \{(0,\id)\}\}$. We will prove that $E$ is a set of measure zero. If, for $(k,g) \in \ZZ \rtimes G = \Gamma$, we denote by
$$
A(k,g) = \{x \in \RR : gx - x = k\},
$$
then we can write
\begin{align*}
E & = \Big\{x \in \RR : x = gx + k \textnormal{ for some } (k,g) \in \Gamma, (k,g) \neq (0,\id)\Big\}\\
& = \bigcup_{(k,g) \in \Gamma \smallsetminus \{(0,\id)\}} A(k,g) .
\end{align*}
Since this is a countable union, in order to prove $|E| = 0$, it suffices to prove that $|A(k,g)| = 0$ for all $(k,g) \in \Gamma \smallsetminus (0,\id)$. Suppose, by contradiction, that $|A(k,g)| > 0$ for some $(k,g) \neq (0,\id)$. Then (see e.g. \cite[Ch. 12, Sec. 61]{Halmos1950}) $A(k,g) - A(k,g)$ contains a neighborhood of the identity. Observe that $A(k,g) - A(k,g) \subset A(0,g)$, which is a subgroup of $\RR$, so the connectedness hypothesis implies (see e.g. \cite[Theorem 15]{Pontryagin1946}) that $A(0,g) = \RR$. Since $G$ acts faithfully on $\RR$, this is possible only for $g = \id$, hence providing a contradiction.
\end{proof}

Proposition \ref{prop:stabilizers} does not cover non-connected groups $\RR$. An example where every point of $\RR$ have nontrivial stabilizers is the following: let $\RR = \frac12 \Z$, let $\ZZ = \Z$ and let $G$ be multiplication by $\{1, -1\}$. Then, for all $x \in \frac12\Z$, the equation $x = gx + k$ for the stabilizer of $x$, with $g \in G$ and $k \in \Z$, is satisfied by the subgroup of $\Z \rtimes G$ whose elements are $\{(0,1), (2x,-1)\}$. More in general, it is easy to see that
if $G$ is nontrivial, then each $x \in \ZZ$ has a nontrivial stabilizer, so, if $\RR$ itself is discrete, then its Haar measure is the counting measure and the set of points with nontrivial stabilizers does not have zero measure.

We are now ready to establish the following fact, relating the existence of a Borel section with the finiteness of $G$.
\begin{prop}\label{prop:Effros}
Let $\RR$ be a connected second countable LCA group, let $\ZZ$ be a uniform lattice subgroup of $\RR$, let $G$ be a group that acts faithfully by automorphisms on $\RR$, and that preserves $\ZZ$, and let $\Gamma = \ZZ \rtimes G$. Then, the following are equivalent.
\begin{itemize}
\item[1.] $G$ is finite.
\item[2.] there exists a measurable $P \subset \RR$ of finite and positive $\RR$-Haar measure such that $\{\gamma P\}_{\gamma \in \Gamma}$ is an a.e. partition of $\RR$.
\end{itemize}
\end{prop}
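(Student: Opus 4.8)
The plan is to prove the two implications separately, using Proposition \ref{prop:stabilizers} as the crucial ingredient on the connected side. For the implication $(2) \Rightarrow (1)$, I would argue by contraposition: suppose $G$ is infinite. If $P$ has finite positive measure and $\{\gamma P\}_{\gamma \in \Gamma}$ tiles $\RR$ up to null sets, then $|\RR| = \sum_{\gamma \in \Gamma} |\gamma P|$. Since the $G$-action preserves the Haar measure of $\RR$ (as recalled in Section \ref{Sec:preliminaries}), every $\gamma = (k,g)$ satisfies $|\gamma P| = |gP + k| = |P| > 0$, so the sum on the right is a countably infinite sum of the fixed positive number $|P|$, forcing $|\RR| = \infty$. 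That alone is not a contradiction (e.g. $\RR = \R^d$), so one needs a second ingredient: I would additionally use the fact that $\ZZ$ is a uniform lattice, so $\RR/\ZZ$ is compact and has finite Haar measure $|\RR/\ZZ| =: c < \infty$. Group the tiles by cosets of $\ZZ$ in $\Gamma$: for fixed $g \in G$, the set $\bigcup_{k \in \ZZ} (k + gP) = \bigcup_{k \in \ZZ}(k + P')$ where $P' = gP$; since $|P'| = |P|$ and the $\ZZ$-translates are (essentially) disjoint, $P'$ injects modulo $\ZZ$ into $\RR/\ZZ$, giving $|P| \leq c$. But moreover the images of $P$ under the finitely-many-looking cosets $g\ZZ$ must themselves essentially partition $\RR/\ZZ$, which forces $\card \cdot |P| = c$; with $\card = \infty$ and $|P| > 0$ this is impossible. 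So $G$ must be finite.

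For $(1) \Rightarrow (2)$, assume $G$ is finite. The connectedness hypothesis lets me invoke Proposition \ref{prop:stabilizers}: for a.e. $x \in \RR$ the $\Gamma$-stabilizer of $x$ is trivial, hence the $\Gamma$-orbit of such $x$ has exactly $|\Gamma| = \card \cdot |\ZZ|$ points, and in particular the action of $\Gamma$ on the conull set of points with trivial stabilizer is free. The plan is then to build $P$ in two steps. First, since $\ZZ$ is a uniform lattice, fix a Borel fundamental domain $D$ for the $\ZZ$-action on $\RR$, so $0 < |D| = |\RR/\ZZ| < \infty$ and $\{k + D\}_{k \in \ZZ}$ partitions $\RR$ up to null sets. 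Second, use the finite group $G$ to cut $D$ into $\card$ pieces: I would construct a Borel set $P \subseteq D$ such that $\{g P : g \in G\}$ partitions $D$ up to null sets. This can be done by a standard measurable-selection / orbit-picking argument — enumerate $G = \{g_1, \dots, g_{\card}\}$, and on the conull free part use the $G$-action to carve out one representative from each $G$-orbit in a Borel way (for instance, apply the Lusin–Novikov / Feldman–Moore type selection, or more concretely observe that $x \mapsto (g_1 x, \dots, g_{\card} x)$ is a Borel map and one can select the "first" coordinate lying in $D$). Then $\{(k,g)P\}_{(k,g) \in \Gamma}$ is an a.e. partition of $\RR$: the $g$-translates of $P$ tile $D$, and the $k$-translates of $D$ tile $\RR$, so composing gives the result; finiteness of $|P| = |D|/\card \in (0,\infty)$ is immediate.

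The main obstacle I anticipate is the measurability in the $(1) \Rightarrow (2)$ direction: producing a genuinely \emph{Borel} (not merely measurable) set $P$ that selects exactly one point from each $G$-orbit within $D$, accounting for the null set of points with nontrivial stabilizer. The key facts that make this work are that $G$ is finite (so orbits are finite and uniformly bounded in size, which trivializes the usual difficulties of selecting from orbits of infinite groups) and that $\RR$ is a standard Borel space; a clean way to phrase it is: the map sending $x$ to the $G$-orbit $Gx$ has finite fibers, so by a theorem on Borel selections for countable-to-one Borel maps (Lusin–Novikov) there is a Borel transversal, and intersecting with $D$ and discarding the null stabilizer set gives $P$. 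I would also double-check the essential-disjointness claim $|gP + k \cap g'P + k'| = 0$ for $(k,g) \neq (k',g')$: this reduces, after translating, to disjointness of $gP$ from $g'P$ inside $D$ (when $k = k'$) and to the fact that distinct $\ZZ$-cosets of $D$ are disjoint (when $k \neq k'$, using $gP, g'P \subseteq D$), both of which hold by construction.
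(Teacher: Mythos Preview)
Your argument for $(2)\Rightarrow(1)$ is correct and essentially coincides with the paper's: both observe that $Q=\bigcup_{g\in G} gP$ is a Borel section of $\RR/\ZZ$, so $|G|\cdot|P|=|Q|<\infty$ forces $G$ finite.

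For $(1)\Rightarrow(2)$ your strategy differs from the paper's and contains a genuine gap. The paper argues that, since $G$ is finite and $\ZZ$ discrete, every $\Gamma$-orbit $\{gx_0+k:g\in G,k\in\ZZ\}$ is a finite union of closed cosets of $\ZZ$, hence closed; it then invokes Effros' theorem \cite[Theorem 2.9]{Effros1965} to obtain a Borel transversal $P$ for the $\Gamma$-orbits directly, and uses Proposition~\ref{prop:stabilizers} only at the end for essential disjointness. Your two-step construction (first a $\ZZ$-fundamental domain $D$, then cut $D$ by $G$) is more elementary in spirit, but the key claim ``$\{gP:g\in G\}$ partitions $D$'' is not correct as written: for $P\subseteq D$ there is no reason that $gP\subseteq D$, since in general $gD\neq D$. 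Your disjointness verification at the end explicitly uses $gP,g'P\subseteq D$, which fails.

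The fix is to carry out the second step on the quotient rather than on $D$. Since $g\ZZ=\ZZ$, the group $G$ acts on $\RR/\ZZ$, and Proposition~\ref{prop:stabilizers} shows this action is free off a null set (if $g[x]=[x]$ then $(k,g)\in\textnormal{stab}_\Gamma(x)$ for some $k\in\ZZ$). With $G$ finite and $\RR/\ZZ$ standard Borel, a Borel transversal $\bar P\subset\RR/\ZZ$ for this action exists by the selection theorems you cite; let $P\subset D$ be its preimage under the section $D\to\RR/\ZZ$. One then checks directly that $\{gP+k:(k,g)\in\Gamma\}$ is an a.e.\ partition of $\RR$: given $x\in\RR$, pass to $[x]\in\RR/\ZZ$, find the unique $g$ with $g^{-1}[x]\in\bar P$, and unwind. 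The point is that your ``composition'' of tilings does not work at the level of subsets of $\RR$, only after passing to the quotient.
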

\begin{proof}
Let us first assume $2$. Then, up to a zero measure set, we have
$$
\RR = \bigcup_{k \in \ZZ} \Big(\bigcup_{g \in G} gP \Big) + k.
$$
Thus, the set $Q := \displaystyle\bigcup_{g \in G} gP$ is a Borel section of $\RR/\ZZ$ of finite measure, and 
$$
|P|\, |G| = \sum_{g \in G} |gP| = |Q| < \infty.
$$
Hence, $G$ must be finite. Conversely, let us assume $1$.
Then, for almost all $x_0 \in \RR$, the orbit
$$
\mathcal{O}_{\Gamma}(x_0) = \{x \in \RR : x = \gamma x_0, \gamma \in \Gamma\}
$$
is closed, so the existence of a measurable set $P$ that intersects each orbit in exactly one point is ensured by \cite[Theorem 2.9]{Effros1965}. The fact that $P$ has positive measure is then due to the discreteness of $\Gamma$. Finally, by Proposition \ref{prop:stabilizers}, we have that $|\gamma P \cap \gamma' P| = 0$ when $\gamma \neq \gamma'$.
\end{proof}

\begin{rem}
For the group of translations and shears, see Section \ref{Sec:example}, there is no Borel section $P$, because the group $G$ of shears is not finite. Equivalently, one can see that the orbits can have accumulation points.
\end{rem}

If $G$ is finite, $\wh{\RR}$ is connected, and the action of $G$ on $\wh{\RR}$ is faithful, applying Proposition \ref{prop:Effros} to the semidirect product $\ZZdual \rtimes G$ acting on $\wh{\RR}$, one can deduce the existence of a measurable section $\Omega_0$ for the orbits space $\wh{\RR}/(\ZZdual \rtimes G)$. In particular, for such an $\Omega_0$ we have that the set $\Omega$ defined by
\begin{equation}\label{eq:Omegon}
\Omega := \displaystyle\bigcup_{g \in G} g^* \Omega_0
\end{equation}
is a Borel section of $\wh{\RR}/\ZZdual$. Moreover
\begin{equation}\label{eq:disjoint}
|g_1^*\Omega_0 \cap g_2^*\Omega_0| = 0 \quad \mbox{for all} \quad g_1, g_2 \in G, g_1 \neq g_2 .
\end{equation}

\subsection{$\Gamma$-invariant range functions}

When a Borel section $\Omega_0$  exists, the $\Gamma$-inva\-riant spaces can be characterized by special range functions, similarly as in the case of shift-invariant spaces.

To see this, let us  define the map 
$\TG: L^2(\RR) \longrightarrow  L^2(\Omega_0 \times G,\ell_2(\ZZdual))$ by 
\begin{equation}\label{def:mappingTG}
\TG[f] (\omega, g)  = \Big\{\wh{f}(g^*\omega + k)\Big\}_{k \in \ZZdual} = \T[f](g^*\omega) \quad \textnormal{a.e.} \ \omega \in {\Omega_0}.
\end{equation}
In a similar way as with the $\T$ map given in Definition \ref{def:mappingT} one can see that $\TG$ is an isometric isomorphism.

A direct calculation using Lemma \ref{lem:intertwining} shows that for $u \in G$ and $s \in \ZZdual$
\begin{align*}
\TG[\rot{u}f](\omega, g) & =  \Big\{\wh{f}(u^*g^*\omega +u^*k)\Big\}_{k\in \ZZdual} = r_u (\TG[f](\omega,gu) )\\
\TG[\tr{s}f](\omega, g) & = e^{-2\pi i g^*\omega. s}\,\TG[f](\omega,g).
\end{align*}
This justifies the following definition.
\begin{defi}\label{Gamma-invariant-def}
Let $\Omega_0 \subset \wh{\RR}$ be a Borel section of $\wh{\RR}/(\ZZdual \rtimes G).$ A measurable $\Gamma$-invariant range function is a measurable map
$$
\K : \Omega_0 \times G \longrightarrow \{\textnormal{closed subspaces of } \ell_2(\ZZdual)\}
$$
such that the closed subspace $\K(\omega,g)$ satisfies the extra condition that
\begin{equation}\label{eq-inv}
r_{u^{-1}}(\K(\omega,g)) = \K(\omega,gu) \textnormal{\ for a.e. $\omega\in \Omega_0$, for all $g, u\in G.$}
\end{equation}
\end{defi}
We say that $\K$ is measurable if the family $\pj_{\K(\omega,g)} \in \B(\ell_2(\ZZdual))$ of orthogonal projections onto $\K(\omega,g)$ is measurable. 
These measurable $\Gamma$-invariant range functions share most of the properties of measurable range functions of Definition \ref{rf-def}. Their proofs can be obtained by a proper adaptation of the classical case \cite{CP2010}. The differences are due to the fact that $\Gamma$-invariant range functions come from the action of a (non-commutative) semidirect product instead of just translations by a commutative group.

Given a $\Gamma$-invariant range function $\K$, denote by 
$$
M_\K = \{F \in L^2(\Omega_0 \times G,\ell_2(\ZZdual)) : F(\omega,g) \in \K(\omega,g) \textnormal{ a.e. } \omega \in \Omega_0, \forall g \in G\}.
$$
Since $\K(\omega,g)$ is closed for every $g \in G$, and a.e. $\omega \in \Omega_0$, it is easy to see that  $M_\K$ is closed. With this notation, we have the following lemmas, whose proofs can be obtained following the arguments of \cite{CP2010}.

\begin{lem} \label{Lem:Gamma-range}
Let $\K$ be a measurable $\Gamma$-invariant range function. Denote by $\Proj_{M_\K} \in \B(L^2(\Omega_0 \times G,\ell_2(\ZZdual)))$ the orthogonal projection onto $M_\K$, and for $(\omega,g) \in \Omega_0\times G$ let $\pj_{\K(\omega,g)} \in \B(\ell_2(\ZZdual))$ be the orthogonal projection onto $\K(\omega,g)$. Then
$$
(\Proj_{M_\K}F)(\omega,g) = \pj_{\K(\omega,g)}(F(\omega,g)) \, , \ \forall \, F \in L^2(\Omega_0 \times G,\ell_2(\ZZdual)) \, , \ \textnormal{a.e. } \omega \in \Omega_0, \forall \ g \in G .
$$
\end{lem}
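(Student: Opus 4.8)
The plan is to reduce the statement to the classical range-function result for shift-invariant spaces, which has already been quoted as Lemma~\ref{Lem:range}. The key observation is that a $\Gamma$-invariant range function $\K$ on $\Omega_0 \times G$ can be viewed as an ordinary (measurable) range function on the Borel section $\Omega = \bigcup_{g \in G} g^*\Omega_0$ of $\wh{\RR}/\ZZdual$: indeed, using the decomposition~\eqref{eq:Omegon} and the a.e.\ disjointness~\eqref{eq:disjoint}, every $\omega' \in \Omega$ can be written a.e.\ uniquely as $\omega' = g^*\omega$ with $\omega \in \Omega_0$ and $g \in G$, and one sets $\J(g^*\omega) := \K(\omega,g)$. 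The covariance condition~\eqref{eq-inv} is precisely what is needed for this to be well defined modulo null sets and, in fact, it makes $\J$ satisfy the hypothesis~\eqref{eq:rangecovariance} of Theorem~\ref{theo:rangecovariance}. Under the isometric isomorphism $\TG$ of~\eqref{def:mappingTG} (together with the identification of $L^2(\Omega_0\times G,\ell_2(\ZZdual))$ with $L^2(\Omega,\ell_2(\ZZdual))$ induced by $\omega' = g^*\omega$), the subspace $M_\K$ corresponds to the subspace $M_\J$ associated to the ordinary range function $\J$.

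Having made this identification, the plan is: first, verify that $\J$ as defined above is a measurable range function, i.e.\ that the field of projections $\omega' \mapsto \pj_{\J(\omega')}$ is measurable; this follows from the measurability of $(\omega,g)\mapsto \pj_{\K(\omega,g)}$ and the fact that the change of variables $\omega' = g^*\omega$ is piecewise (on each $g^*\Omega_0$) a Borel isomorphism preserving the Haar measure (by the measure-invariance of the $G$-action on $\wh{\RR}$ recalled in Section~\ref{Sec:preliminaries}). Second, apply Lemma~\ref{Lem:range} to $\J$ to get that the orthogonal projection $\Proj_{M_\J}$ acts fiberwise as $(\Proj_{M_\J}H)(\omega') = \pj_{\J(\omega')}(H(\omega'))$ for a.e.\ $\omega' \in \Omega$. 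Third, transport this back through the identification: writing $F \in L^2(\Omega_0\times G,\ell_2(\ZZdual))$ and $H$ for its image in $L^2(\Omega,\ell_2(\ZZdual))$ (so $H(g^*\omega) = F(\omega,g)$), the equality $M_\K \leftrightarrow M_\J$ gives $\Proj_{M_\K}F \leftrightarrow \Proj_{M_\J}H$, whence
$$
(\Proj_{M_\K}F)(\omega,g) = (\Proj_{M_\J}H)(g^*\omega) = \pj_{\J(g^*\omega)}(H(g^*\omega)) = \pj_{\K(\omega,g)}(F(\omega,g))
$$
for a.e.\ $\omega \in \Omega_0$ and all $g \in G$, which is the assertion.

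Alternatively, one can give a direct proof that never leaves $\Omega_0 \times G$, mimicking the proof of Lemma~\ref{Lem:range} in \cite{CP2010}: define the fiberwise operator $(\mathcal{Q}F)(\omega,g) := \pj_{\K(\omega,g)}(F(\omega,g))$; check it is a well-defined bounded self-adjoint idempotent on $L^2(\Omega_0\times G,\ell_2(\ZZdual))$ (measurability of the integrand uses measurability of $\K$; idempotency and self-adjointness are fiberwise facts about orthogonal projections); observe $\mathcal{Q}F = F$ exactly when $F(\omega,g)\in\K(\omega,g)$ a.e., i.e.\ the range of $\mathcal{Q}$ is $M_\K$; and conclude $\mathcal{Q} = \Proj_{M_\K}$ by uniqueness of the orthogonal projection onto a closed subspace. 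This argument is essentially identical to the shift-invariant case and does not even use~\eqref{eq-inv}; the covariance condition is only what guarantees that $M_\K$ is the $\TG$-image of an actual $\Gamma$-invariant subspace of $L^2(\RR)$, which is not needed for this particular lemma.

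The main obstacle — really the only nonroutine point — is the measurability bookkeeping in the first approach: one must be careful that gluing the measurable fields $\pj_{\K(\cdot,g)}$ over the pieces $g^*\Omega_0$ yields a measurable field on all of $\Omega$, and that the identification $L^2(\Omega_0\times G,\ell_2(\ZZdual)) \cong L^2(\Omega,\ell_2(\ZZdual))$ is a genuine unitary intertwining $M_\K$ with $M_\J$. Because of this, I would most likely present the second, self-contained argument, which sidesteps the issue entirely and reduces the proof to the verification that a fiberwise projection operator is measurable and bounded — exactly the content already handled in \cite{CP2010} for Lemma~\ref{Lem:range}, of which this is the evident generalization with the index set $\ZZdual$ replaced by $\Omega_0 \times G$ in the base.
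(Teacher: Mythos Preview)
Your proposal is correct, and your second approach---defining the fiberwise projection operator and verifying directly that it is a bounded self-adjoint idempotent with range $M_\K$---is precisely what the paper has in mind: the paper gives no explicit proof but simply states that the argument follows by adapting \cite{CP2010}, which is exactly your direct argument. Your observation that the covariance condition~\eqref{eq-inv} is not actually needed for this lemma is also correct and worth noting.
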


\begin{lem}\label{lem:uniquenessK}
Let $\K, \K'$ be two measurable $\Gamma$-invariant range functions. Then $M_\K = M_{\K'}$ if and only if $\K(\omega,g) = \K'(\omega,g)$ for a.e. $\omega \in \Omega_0$, for all $g \in G$.
\end{lem}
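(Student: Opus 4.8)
The plan is to prove Lemma~\ref{lem:uniquenessK} by adapting the uniqueness argument for ordinary measurable range functions (\cite[Lemma 3.11]{CP2010}), using Lemma~\ref{Lem:Gamma-range} as the main tool, exactly as Remark~\ref{range-unicidad} indicated for the classical case. The ``if'' direction is immediate: if $\K(\omega,g) = \K'(\omega,g)$ for a.e.\ $\omega\in\Omega_0$ and all $g\in G$, then the defining conditions for membership in $M_\K$ and $M_{\K'}$ agree a.e., so the two subspaces coincide. The content is in the ``only if'' direction.

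For the forward implication, suppose $M_\K = M_{\K'}$. By Lemma~\ref{Lem:Gamma-range} the orthogonal projection $\Proj_{M_\K}$ acts fiberwise as $(\Proj_{M_\K}F)(\omega,g) = \pj_{\K(\omega,g)}(F(\omega,g))$, and likewise for $\K'$. Since $M_\K = M_{\K'}$, these two projection operators on $L^2(\Omega_0\times G,\ell_2(\ZZdual))$ are equal, so for every $F$ in this Hilbert space we have $\pj_{\K(\omega,g)}(F(\omega,g)) = \pj_{\K'(\omega,g)}(F(\omega,g))$ for a.e.\ $\omega\in\Omega_0$ and all $g\in G$. The standard trick is now to test this against a countable dense family: fix a countable dense subset $\{v_m\}_{m\in\N}\subset \ell_2(\ZZdual)$, and for each fixed $g_0\in G$ and each $m$ consider the function $F_{m,g_0}(\omega,g) = \delta_{g,g_0}\,\one{\Omega_0}(\omega)\,v_m$, which lies in $L^2(\Omega_0\times G,\ell_2(\ZZdual))$ because $\Omega_0$ has finite measure and $G$ is (in this section) finite. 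Applying the fiberwise identity to each $F_{m,g_0}$ and taking the (countable) union of the corresponding null sets, we get a single null set $N\subset\Omega_0$ such that $\pj_{\K(\omega,g)}v_m = \pj_{\K'(\omega,g)}v_m$ for all $\omega\in\Omega_0\setminus N$, all $g\in G$, and all $m\in\N$; by density of $\{v_m\}$ and continuity of the projections, $\pj_{\K(\omega,g)} = \pj_{\K'(\omega,g)}$ on $\Omega_0\setminus N$ for all $g$, hence $\K(\omega,g) = \K'(\omega,g)$ a.e.\ $\omega\in\Omega_0$, for all $g\in G$, as claimed.

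The main (and essentially the only) obstacle is bookkeeping with the null sets: one must arrange that the exceptional set does not depend on $m$ or on $g$, which works because both $\N$ and $G$ are countable, so a countable union of null sets is null; here the finiteness of $G$ assumed throughout Section~\ref{Sec:Section} is convenient but countability already suffices. One should also note that the $\Gamma$-invariance constraint \eqref{eq-inv} plays no role in this particular lemma --- it is a statement purely about the fibered structure $M_\K$ versus $M_{\K'}$ --- so no compatibility between fibers needs to be invoked; the covariance condition is automatically inherited since $\K$ and $\K'$ are assumed to be $\Gamma$-invariant range functions to begin with. This is why the proof ``can be obtained following the arguments of \cite{CP2010}'' with only cosmetic changes.
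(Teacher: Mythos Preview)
Your proposal is correct and follows precisely the route the paper indicates: the paper does not give an explicit proof of Lemma~\ref{lem:uniquenessK} but states that it ``can be obtained following the arguments of \cite{CP2010}'', and what you have written is exactly the adaptation of \cite[Lemma~3.11]{CP2010} to the product domain $\Omega_0\times G$, using Lemma~\ref{Lem:Gamma-range} in place of Lemma~\ref{Lem:range}. Your observations that the $\Gamma$-invariance condition \eqref{eq-inv} is irrelevant here and that countability of $G$ suffices for the null-set bookkeeping are both accurate.
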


The next theorem characterizes $\Gamma$-invariant spaces in terms of $\Gamma$-invariant range functions (cf. Theorem \ref{Th:Helson}).
\begin{theo}\label{Th:Helson-2}
Assume that a Borel section $\Omega_0$ for the action of \, $\wh{\Gamma}\rtimes G$ on $\wh{\RR}$ exists, let $\V$ be a closed subspace of $L^2(\RR)$ and let $\TG$ the map (\ref{def:mappingTG}). The subspace $\V$ is $\Gamma$-invariant if and only if there exists a unique measurable $\Gamma$-invariant range function $\K_\V$ such that 
\begin{equation}\label{cc}
\V=\{ f\in L^2(\RR): \TG[f](\omega,g) \in \K_{\V}(\omega,g),\  g \in G,\  a.e.\; \omega \in \Omega_0)\}.
\end{equation}
Moreover, if $\V=S_{\Gamma}(\A)$ for some countable set $\A$ of $L^2(\RR)$, the measurable $\Gamma$-invariant range function associated to $S_{\Gamma}(\A)$ satisfies for each $u \in G$ that
$$
\K_\V(\omega,u) =\ol{\vsp}\{\TG[R_g\varphi](\omega,u)\, : \, \varphi \in \A, g \in G\}, \quad a.e. \ \omega \in \Omega_0, \forall \ u \in G.
$$
\end{theo}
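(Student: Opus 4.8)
The plan is to mimic the proof of Theorem \ref{Th:Helson} (Helson's theorem for shift-invariant spaces), transporting everything through the isometric isomorphism $\TG$ and keeping careful track of the covariance relation \eqref{eq-inv}. First I would observe that $\TG$ factors through $\T$ in the following sense: since $\Omega = \bigcup_{g\in G} g^*\Omega_0$ is a Borel section of $\wh{\RR}/\ZZdual$ by \eqref{eq:Omegon}--\eqref{eq:disjoint}, a function $F\in L^2(\Omega_0\times G,\ell_2(\ZZdual))$ corresponds, via $F(\omega,g)=\widetilde F(g^*\omega)$, to a function $\widetilde F\in L^2(\Omega,\ell_2(\ZZdual))$, and $\TG[f](\omega,g)=\T[f](g^*\omega)$ exactly expresses that $\TG$ is $\T$ read on the section $\Omega_0\times G$ instead of $\Omega$. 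Under this identification, $\TG$ is an isometric isomorphism onto $L^2(\Omega_0\times G,\ell_2(\ZZdual))$ (as claimed right after \eqref{def:mappingTG}), and the intertwining formulas displayed before Definition \ref{Gamma-invariant-def} say that $\TG \rot{u}\TG^{-1}$ acts as $F(\omega,g)\mapsto r_u(F(\omega,gu))$ and $\TG \tr{s}\TG^{-1}$ acts as $F(\omega,g)\mapsto e^{-2\pi i g^*\omega.s}F(\omega,g)$.

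Next I would prove the equivalence \eqref{cc}. For the ``only if'' direction: assume $\V$ is $\Gamma$-invariant. In particular $\V$ is $\ZZ$-invariant, so by Theorem \ref{Th:Helson} there is a measurable range function $\J_\V$ on $\Omega$ with $\T(\V)=M_{\J_\V}$, and by Theorem \ref{theo:rangecovariance} it satisfies $\J_\V(g^*\omega)=r_{g^{-1}}\J_\V(\omega)$ a.e. Now define $\K_\V(\omega,g):=\J_\V(g^*\omega)$ for $\omega\in\Omega_0$, $g\in G$. Measurability of $\K_\V$ follows from measurability of $\J_\V$ together with the fact that $\omega\mapsto g^*\omega$ is a Borel bijection of $\Omega_0$ onto $g^*\Omega_0$ and that $r_g$ is unitary, so $\pj_{\K_\V(\omega,g)}=r_{g^{-1}}\pj_{\J_\V(g^*\omega)}r_g$ depends measurably on $(\omega,g)$. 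The covariance \eqref{eq-inv} is a direct computation: $r_{u^{-1}}\K_\V(\omega,g)=r_{u^{-1}}\J_\V(g^*\omega)=\J_\V(u^*g^*\omega)=\J_\V((gu)^*\omega)=\K_\V(\omega,gu)$, using \eqref{Eq:star}. Finally, $f\in\V \iff \T[f](\xi)\in\J_\V(\xi)$ for a.e. $\xi\in\Omega$; since every $\xi\in\Omega$ is (a.e.) uniquely $g^*\omega$ with $\omega\in\Omega_0$, $g\in G$, this is equivalent to $\TG[f](\omega,g)=\T[f](g^*\omega)\in\J_\V(g^*\omega)=\K_\V(\omega,g)$ for all $g\in G$ and a.e. $\omega\in\Omega_0$, which is \eqref{cc}. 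For the ``if'' direction, given a measurable $\Gamma$-invariant range function $\K$, set $\J(\xi):=r_{g^{-1}}\K(\omega,g)$ whenever $\xi=g^*\omega$ with $\omega\in\Omega_0$; one checks using \eqref{eq-inv} that this is well-defined independently of the representation $\xi=g^*\omega$ (two such differ by $g\mapsto gu$ paired with $\omega\mapsto u^*\omega$... here one must be slightly careful since $\Omega_0$ is a section, so in fact the representation is essentially unique and well-definedness is automatic a.e.), that $\J$ is a measurable range function, and that it satisfies the covariance \eqref{eq:rangecovariance}. Then $\V:=\T^{-1}(M_\J)$ is $\Gamma$-invariant by Theorem \ref{theo:rangecovariance}, and unwinding definitions shows $\V$ is exactly the right-hand side of \eqref{cc}. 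Uniqueness of $\K_\V$ is Lemma \ref{lem:uniquenessK}.

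For the moreover part, suppose $\V=S_\Gamma(\A)$ with $\A\subset L^2(\RR)$ countable. Then $\V=S(\AG)$ where $\AG=\{\rot{g}\varphi:\varphi\in\A, g\in G\}$ is the $\ZZ$-invariant space generated by the $G$-orbits, because $S_\Gamma(\A)=\ol{\vsp}\{\tr{k}\rot{g}\varphi\}=\ol{\vsp}\{\tr{k}\psi:\psi\in\AG\}$. By the second part of Theorem \ref{Th:Helson}, $\J_\V(\xi)=\ol{\vsp}\{\T[\rot{g}\varphi](\xi):\varphi\in\A,g\in G\}$ a.e. $\xi\in\Omega$. Evaluating at $\xi=u^*\omega$ with $\omega\in\Omega_0$, $u\in G$, and using $\TG[\rot{g}\varphi](\omega,u)=\T[\rot{g}\varphi](u^*\omega)$, gives $\K_\V(\omega,u)=\J_\V(u^*\omega)=\ol{\vsp}\{\TG[\rot{g}\varphi](\omega,u):\varphi\in\A,g\in G\}$, as claimed.

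The main obstacle I expect is the measurability bookkeeping: one must verify that the section $\Omega=\bigcup_{g}g^*\Omega_0$ really is a Borel section of $\wh\RR/\ZZdual$ with the $g^*\Omega_0$ essentially disjoint (this is \eqref{eq:disjoint}, already recorded), and then that passing between $L^2(\Omega,\ell_2(\ZZdual))$ and $L^2(\Omega_0\times G,\ell_2(\ZZdual))$ via $F\mapsto(\,(\omega,g)\mapsto F(g^*\omega)\,)$ is a genuine unitary, with measurable projection fields transported correctly; once $\TG$ is established as an isometric isomorphism intertwining the representations as stated, the algebra of the covariance relations is routine and the rest is a faithful transcription of Bownik--Helson and of Theorem \ref{theo:rangecovariance}. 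A secondary subtlety is the well-definedness of $\J$ in the ``if'' direction, which hinges on $\Omega_0$ being a section (so the decomposition $\xi=g^*\omega$ is a.e.\ unique) rather than merely an invariant-set situation; invoking \eqref{eq:disjoint} handles this.
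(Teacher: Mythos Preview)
Your strategy is sound and, in fact, more economical than the paper's: you reduce both directions to the already-proved Theorems \ref{Th:Helson} and \ref{theo:rangecovariance} via the identification $L^2(\Omega,\ell_2(\ZZdual))\simeq L^2(\Omega_0\times G,\ell_2(\ZZdual))$, whereas the paper gives a self-contained argument that essentially re-runs the Helson machinery for $\TG$. For the ``only if'' direction the paper takes a countable $\A$ with $\V=S_\Gamma(\A)$, \emph{defines} $\K$ by the formula in the ``moreover'' clause, and then proves $\TG(\V)=M_\K$ directly (the containment $M\subset M_\K$ by approximation, the reverse by a Fourier-coefficient orthogonality argument using the characters $e^{-2\pi i g^*\omega.k}$); for the ``if'' direction the paper verifies $\Gamma$-invariance of $\V$ directly from the intertwining relations, without passing back to a $\J$. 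Your route is shorter because you recycle work already done; the paper's route has the advantage of being independent of Theorem \ref{theo:rangecovariance}.

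One genuine slip to fix: in your ``if'' direction you set $\J(\xi):=r_{g^{-1}}\K(\omega,g)$ when $\xi=g^*\omega$, $\omega\in\Omega_0$. This is wrong; the correct definition is simply $\J(\xi):=\K(\omega,g)$. Indeed, with this choice, for $\omega\in\Omega$ written (a.e.\ uniquely) as $\omega=g_0^*\omega_0$ one gets $\J(g^*\omega)=\K(\omega_0,g_0g)=r_{g^{-1}}\K(\omega_0,g_0)=r_{g^{-1}}\J(\omega)$ by \eqref{eq-inv} and \eqref{Eq:star}, which is exactly \eqref{eq:rangecovariance}. Similarly, your formula $\pj_{\K_\V(\omega,g)}=r_{g^{-1}}\pj_{\J_\V(g^*\omega)}r_g$ is not what you need: since $\K_\V(\omega,g)=\J_\V(g^*\omega)$ by definition, the projections are equal, $\pj_{\K_\V(\omega,g)}=\pj_{\J_\V(g^*\omega)}$, and measurability follows immediately. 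These are local errors that do not affect the architecture of your argument.
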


\begin{proof} 
Assume first that $\V$ is $\Gamma$-invariant. We want to prove the existence of a measurable $\Gamma$-invariant range function $\K$ associated to $\V.$
Let $\A\subset L^2(\RR)$ be a countable set such that $\V=S_{\Gamma}({\A})$. Define
\begin{equation}\label{dd}
\K(\omega,u) = \ol{\vsp}\{\TG[R_g\varphi](\omega,u) \, : \, \varphi \in \A, g \in G\}, \quad a.e. \ \omega \in \Omega_0, \forall \ u \in G.
\end{equation}
It is straightforward to see that $\K$ is a $\Gamma$-invariant range function.

Further it is clear that equation \eqref{cc} is true if and only if $\TG(\V)=M_{\K}.$ 
Set now  $M:=\TG(\V).$
We will show that $M =  M_\K. $

To see that $M \subset M_\K$, let $F \in M$ and for each $n \in \N$ choose $f_n \in$ span$\{R_g\varphi:\varphi \in \A, g \in G\}\subset \V$ such that $F_n:=\TG(f_n) \rightarrow F$ in $M$. So, $F_{n_k}(\omega,g)$ converges to $F(\omega,g)\; a.e.\, \omega \in \Omega_0, \forall \ g \in G$, for some subsequence $\{F_{n_k}\}_k.$
Since $F_{n_k}(\omega,g) \in \K(\omega,g)$ for all $k$ and for a.e. $\omega \in \Omega_0$ and all $g \in G$, the fact that $\K(\omega,g)$ is closed implies that $F(\omega,g)\in \K(\omega,g)$ for a.e. $\omega \in \Omega_0$ and all $g \in G$. 

To see that $M$ cannot be a proper subspace of $M_\K$, assume that  $F \in M_\K$, and $F \bot M$. Then, for all $\phi \in M$ and all $k \in \ZZ$, denoting by $f = \TG^{-1}[F]$, and by $\varphi = \TG^{-1}[\phi]$, we have that $\TG[\tr{k}\varphi] \in M$. Since
$$
\TG[\tr{k}\varphi](\omega,g) = e^{-2\pi i g^*\omega.k}\,\TG[\varphi](\omega,g)
$$
we have
\begin{align*}
0 & = \langle F, \TG[\tr{k}\varphi] \rangle = \int_{\Omega_0} \sum_{g \in G}\langle F(\omega,g), \TG[\tr{k}\varphi](\omega,g)\rangle_{\ell_2(\ZZdual)} d\omega\\
& = \int_{\Omega_0} \sum_{g \in G} e^{-2\pi i g^*\omega. k} \sum_{\ell \in \ZZdual} \wh{f}(g^*\omega + \ell) \ol{\wh{\varphi}(g^*\omega + \ell)} d\omega\\
& = \int_{\Omega} e^{-2\pi i \omega . k} \sum_{\ell \in \ZZdual} \wh{f}(\omega + \ell) \ol{\wh{\varphi}(\omega + \ell)} d\omega\\
& = \int_{\Omega} e^{-2\pi i \omega. k} \langle \T[f](\omega), \T[\varphi] (\omega)\rangle_{\ell_2(\ZZdual)} d\omega .
\end{align*}
Using that the last integral is a Fourier coefficient with respect to the character $e^{-2\pi i \omega . k}$, we have that $\langle \T[f](\omega), \T[\varphi] (\omega)\rangle_{\ell_2(\ZZdual)} = 0$ for a.e. $\omega \in \Omega$. Thus, by definition of $\TG$, we have
$$
\langle F(\omega,g), \phi(\omega,g)\rangle_{\ell_2(\ZZdual)} = \langle \T[f](g^*\omega),\T[\varphi](g^*\omega)\rangle_{\ell_2(\ZZdual)} = 0 \quad \textnormal{a.e. } \omega \in \Omega_0 .
$$
Since $\TG[\rot{g}\varphi] \in M$ for all $g \in G$ and all $\varphi \in \Phi$, we then have
$$
\langle F(\omega,u), \TG[\rot{g}\varphi](\omega,u)\rangle_{\ell_2(\ZZdual)} = 0.
$$
By (\ref{dd}), $F(\omega,g)\in \K(\omega,g)^{\perp}$ for a.e. $\omega \in \Omega_0$ and all $g \in G$. Since  $F(\omega,g)\in \K(\omega,g)$, we conclude that $F = 0$.

Regarding the measurability of $\K$, it is enough to note that for $g = \id$, $\K(\omega,\id)$ is the restriction of the range function $\T$ for the shift invariant space $\V=S(R_g\varphi: g\in G, \varphi \in \Phi)$ to the measurable subset $\Omega_0$ and hence measurable.
For any other $g \in G$, $\K(\omega,g)= r_g^{-1} (\K(\omega,\id))$ and therefore also measurable.

So, we  proved that for each $\Gamma$-invariant subspace $\V$ there exists a measurable $\Gamma$-invariant range function, say $\K_{\V}$, that satisfies \eqref{cc}. This function is unique due to Lemma \ref{lem:uniquenessK}.

Conversely, assume that $\K$ is a measurable $\Gamma$-invariant range function. Define $\V$ by \eqref{cc}. Then, $\V$ is $\Gamma$-invariant, for if $f\in \V$, $\TG[f](\omega,g) \in \K(\omega,g)\; a.e. \,\omega$ and, $\TG[R_uf](\omega,g) = r_u \TG[f](\omega,gu) \in r_u \K(\omega,gu) = \K(\omega,g)$ by the $\Gamma$-invariance of $\K$.
\end{proof}

\section{Approximation}\label{sec:approx}

In this section we study the approximation problem mentioned in the introduction.
The idea is to find a low dimensional model (a subspace), among a certain class of subspaces, that best fits a given dataset.
The subspace will be optimal for the data in the sense that it minimises a proposed functional.
This problem has been studied in the case the class of subspaces are shift invariant spaces \cite{ACHM2007, CM2016}.
The importance of our approach is that our class includes subspaces that are invariant by rigid movements in $\R^d$.
This is very important in applications since we are able to include rotations and symmetries.

Throughout this section, we will always assume that $G$ is finite, and that a Borel section of $\wh{\RR}/(\ZZdual \rtimes G)$ exists. Recall that, by Proposition \ref{prop:Effros}, such a section always exists whenever $G$ acts faithfully on $\wh{\RR}$ and $\wh{\RR}$ is connected.

The next theorem is the main result of this section, and it shows that for any given dataset, and a positive number $\kappa$, there always exists an optimal $\Gamma$-invariant subspace of length at most $\kappa$.
We also obtain the exact error of approximation and a formula for a Parseval set of generators.
 
Let $\data = \{f_1,\dots,f_m\} \subset L^2(\RR)$ and let $\V \subset L^2(\RR)$ be a closed subspace. Define the functional 
\begin{equation}\label{eq:functional}
\E[\V; \data] = \sum_{i=1}^m \|f_i - \Proj_{\V}f_i\|^2_{L^2(\RR)}.
\end{equation}
Note that the value of the functional $\E$ in $\V$ and $\data$ is the sum of the squares of the distances between each element of $\data$ and the closed subspace $\V$, which will be chosen in the class of $\Gamma$-invariant subspaces $S_\Gamma(\A)$ introduced in Section \ref{Sec:structure}.

Before stating the main theorem concerning the minimization of (\ref{eq:functional}), we need the following properties of eigenvalues and eigenvectors of the Gramian of the data.

\begin{lem}\label{lem:eigenG}
Let $\DG$ be the family $\{R(g)f_i : (i,g) \in I_m \times G\} \subset L^2(\RR)$ ordered with the lexicographical ordering of $I_m \times G := \{1, 2, \dots, m\} \times G$, and let $\G := \G_{\DG}$ be its Gramian as in Definition \ref{def:gramian}.
\begin{itemize}
\item[1.] For $\omega \in \Omega$, let $\{\sigma_{i,g}(\omega)^2 : (i,g) \in I_m \times G\}$ be the eigenvalues of $\G(\omega)$  ordered decreasingly with the lexicographical ordering of $I_m \times G$, counted with their multiplicity. Then they are G-invariant, in the sense that
\begin{equation}\label{eq:eigenvaluesinvariance}
\sigma_{i,g}(g_0^*\omega) = \sigma_{i,g}(\omega) \quad \forall \ (i,g) \in I_m \times G, \ \forall \, g_0 \in G, \textnormal{a.e. } \omega \in \Omega. 
\end{equation}
\item[2.] For $\omega \in \Omega_0$, let $\{V^{i,g}(\omega) : (i,g) \in I_m \times G\} \subset \C^{m\card}$ be the corresponding orthonormal eigenvectors of $\G(\omega)$, and denote the components of the $(i,g)$-th eigenvector by $\{V^{i,g}_{j,q}(\omega) : (j,q) \in I_m \times G\} \subset \C$. Then, it is possible to obtain a family of orthonormal eigenvectors of $\G(\omega)$ at a.e. $\omega \in \Omega$ whose components satisfy
\begin{equation}\label{eq:eigenvectorscovariance}
V^{i,g}_{j,q}(g_0^*\omega) = V^{i,g}_{j,g_0q}(\omega) \quad \forall \ g_0 \in G, \, \textnormal{a.e. } \omega \in \Omega.
\end{equation}
\end{itemize}
\end{lem}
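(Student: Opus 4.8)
The plan is to analyze the covariance relation for the Gramian $\G = \G_{\DG}$ established in Corollary \ref{cor:Gramcovariance}, namely $\G(g_0^*\omega) = \lambda_{g_0^{-1}}\G(\omega)\lambda_{g_0}$, and to translate the similarity by the unitary $\lambda_{g_0}$ into statements about eigenvalues and eigenvectors. First I would record that since $\lambda_{g_0}$ is unitary, $\G(g_0^*\omega)$ and $\G(\omega)$ are unitarily equivalent, hence have the same spectrum with multiplicities; because we have fixed the convention of listing the eigenvalues $\{\sigma_{i,g}(\omega)^2\}$ decreasingly according to the lexicographical order on $I_m \times G$ (and counting multiplicities), the equality of the ordered lists is exactly \eqref{eq:eigenvaluesinvariance}. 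This takes care of part 1, modulo a remark that the assignment $\omega \mapsto \sigma_{i,g}(\omega)^2$ is measurable, which follows from measurability of $\omega \mapsto \G(\omega)$ together with measurable functional calculus.

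For part 2 the similarity $\G(g_0^*\omega) = \lambda_{g_0^{-1}}\G(\omega)\lambda_{g_0} = \lambda_{g_0}^*\,\G(\omega)\,\lambda_{g_0}$ shows that if $v$ is an eigenvector of $\G(\omega)$ for eigenvalue $\sigma^2$, then $\lambda_{g_0}^* v$ is an eigenvector of $\G(g_0^*\omega)$ for the same eigenvalue. The idea is therefore to fix, on the base section $\Omega_0$, a measurable choice of orthonormal eigenvectors $\{V^{i,g}(\omega)\}_{(i,g)\in I_m\times G}$ of $\G(\omega)$ — such a measurable choice exists by standard measurable-selection results for the spectral decomposition of a measurable field of Hermitian matrices — and then \emph{propagate} this choice to the rest of $\Omega$ using the group. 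Concretely, recall from \eqref{eq:Omegon} and \eqref{eq:disjoint} that $\Omega = \bigcup_{g_0\in G} g_0^*\Omega_0$ is an (essentially disjoint) partition; so every $\omega'\in\Omega$ is a.e. uniquely of the form $\omega' = g_0^*\omega$ with $\omega\in\Omega_0$, and we \emph{define} the eigenvectors at $\omega'$ by $V^{i,g}(g_0^*\omega) := \lambda_{g_0}^*\,V^{i,g}(\omega)$. By the computation above these are orthonormal eigenvectors of $\G(g_0^*\omega)$ with the correctly ordered eigenvalues (using part 1), and the map $\omega'\mapsto V^{i,g}(\omega')$ is measurable since on each measurable piece $g_0^*\Omega_0$ it is the composition of the measurable map on $\Omega_0$ with the fixed unitary $\lambda_{g_0}^*$. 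Writing this out in components, $\lambda_{g_0}^*$ acts on $c:I_m\times G\to\C$ by $(\lambda_{g_0}^* c)_{j,q} = (\lambda_{g_0^{-1}}c)_{j,q} = c_{j,g_0 q}$ (by Definition \ref{Def:lambda} with $g$ replaced by $g_0^{-1}$), which gives precisely
\[
V^{i,g}_{j,q}(g_0^*\omega) = (\lambda_{g_0}^* V^{i,g}(\omega))_{j,q} = V^{i,g}_{j,g_0 q}(\omega),
\]
i.e. \eqref{eq:eigenvectorscovariance}. One should also check this definition is consistent on the overlaps of the sets $g_0^*\Omega_0$, but these overlaps have measure zero by \eqref{eq:disjoint}, so there is nothing to verify beyond a "for a.e. $\omega$" statement.

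The step I expect to be the main obstacle is the measurable selection of the eigenvectors on $\Omega_0$ itself: eigenvectors cannot in general be chosen continuously across eigenvalue crossings, and when eigenvalues are degenerate one only gets a measurable field of eigenspaces, not of individual vectors. The way around this is to invoke a measurable version of the spectral theorem for measurable fields of self-adjoint matrices of bounded (here, finite and constant) dimension — e.g. ordering the eigenvalues decreasingly as we did, the spectral projections onto $\bigoplus\{$eigenspaces with eigenvalue $\geq t\}$ depend measurably on $\omega$, and a Gram--Schmidt procedure applied to a fixed basis of $\C^{m\card}$, performed measurably, yields an orthonormal measurable frame adapted to the eigenspace filtration. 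Once this measurable choice on $\Omega_0$ is in hand, the propagation to $\Omega$ via $\lambda_{g_0}^*$ is purely algebraic and routine, so the bulk of the work is concentrated in this selection lemma; everything else is bookkeeping with the partition \eqref{eq:Omegon}--\eqref{eq:disjoint} and the unitarity of $\lambda$.
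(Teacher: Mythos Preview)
Your argument is correct and follows essentially the same route as the paper: both use Corollary~\ref{cor:Gramcovariance} to conclude that $\G(g_0^*\omega)$ and $\G(\omega)$ are unitarily equivalent via $\lambda_{g_0}$, whence part~1 follows from the fixed decreasing ordering, and both obtain part~2 by propagating a choice of eigenvectors on $\Omega_0$ to all of $\Omega$ via $V^{i,g}(g_0^*\omega) = \lambda_{g_0^{-1}}V^{i,g}(\omega)$, then reading off the component formula from the definition of $\lambda$. Your discussion of measurable selection on $\Omega_0$ goes beyond what the lemma as stated requires (the hypothesis already hands you eigenvectors on $\Omega_0$), but it is correct and is precisely what is needed downstream in the proof of Theorem~\ref{theo:main}.
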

\begin{proof}
To prove 1. observe that, by Corollary \ref{cor:Gramcovariance}, the Gramian $\G$ of $\DG$ satisfies (\ref{eq:Gramiancovariance}), so the spectrum of $\G(g^*\omega)$ coincides with that of $\G(\omega)$ at a.e. $\omega \in \Omega$, for all $g \in G$, because $\lambda_{g}$ is unitary. Then, (\ref{eq:eigenvaluesinvariance}) holds due to the decreasing ordering imposed at a.e. $\omega \in \Omega$. To prove 2, for a.e. $\omega \in \Omega$ let $\omega_0 \in \Omega_0$ and $g_0 \in G$ be the unique elements such that $\omega = g_0^*\omega_0$, and for $(i,g) \in I_m \times G$ let $z \in \C^{m\card}$ satisfy
$$
\G(\omega) z = \sigma_{i,g}(\omega) z.
$$
Using (\ref{eq:Gramiancovariance}) and (\ref{eq:eigenvaluesinvariance}) we deduce
$$
\G(\omega_0) \lambda_{g_0} z = \sigma_{i,g}(\omega_0) \lambda_{g_0} z ,
$$
so we can take $V^{i,g}$ at $\omega$ in such a way that
\begin{equation}\label{eq:defineeigenvectorsG}
\lambda_{g_0} V^{i,g}(\omega) = V^{i,g}(\omega_0). 
\end{equation}
Thus, by definition of $\lambda$
$$
V^{i,g}_{j,q}(g_0^*\omega_0) = (\lambda_{g_0^{-1}} V^{i,g}(\omega_0))_{j,q} = V^{i,g}_{j,g_0q}(\omega_0) \quad \textnormal{a.e. } \omega_0 \in \Omega_0.
$$
The validity of this identity for a.e. $\omega \in \Omega$ follows from this, using (\ref{eq:defineeigenvectorsG}) and the definition   of $\lambda$.
\end{proof}

\begin{theo}\label{theo:main}
Let $\data = \{f_1,\dots,f_m\}$ be a set of functional data in $L^2(\RR)$. Using the same notations as in Lemma \ref{lem:eigenG}, the following holds:

\vspace{1ex}
\noindent
\textnormal{1.} For all $\kappa \in \{1,\dots,m\}$ there exists a $\Gamma$-invariant space $\W \subset L^2(\RR)$ generated by $\Gamma$-orbits of a family $\{\psi_i\}_{i=1}^\kappa \subset L^2(\RR)$ such that
$$
\E[\W ;\data] = \min \{\E[\V;\data] : \V  \subset L^2(\RR) \, ,\Gamma \text{-invariant and }  \len(\V) \leq \kappa \}
$$
and the system $\{\tr{k}\rot{g}\psi_i \, : \, k \in \ZZ, g \in G, i\in\{1,\dots,\kappa\}\}$ is a Parseval frame of $\W$.

\vspace{1ex}
\noindent
\textnormal{2.} The approximation error for the minimizing space $\W$ is given by
$$
\E[\W;\data] = \sum_{i = \kappa + 1}^m \sum_{g \in G} \int_{\Omega_0} \sigma_{(i,g)}(\omega)^2 d\omega .
$$

\vspace{1ex}
\noindent
\textnormal{3.} A family $\{\psi_i\}_{i=1}^\kappa \subset L^2(\RR)$ that generates a minimizer $\W$ is given by
\begin{equation}\label{eq:generators}
\T[\psi_i](\omega) = \sum_{(j,g') \in I_m\times G} C_i^{j,g'}(\omega) \T[\rot{g'}f_j](\omega)
\end{equation}
where
\begin{equation}\label{eq:coefficients}
C_i^{j,g'}(\omega) = \sum_{g \in G} \theta_{i,g}(\omega) V_{j,g'}^{i,g} (\omega) \one{g^*\Omega_0}(\omega) \ , \quad i=1,\dots,\kappa
\end{equation}
and $\theta_{i,g}(\omega) = (\sigma_{i,g}(\omega))^{-1}$ if $\sigma_{i,g}(\omega) \neq 0$ and $0$ otherwise. All identities hold for a.e. $\omega \in \Omega$.
\end{theo}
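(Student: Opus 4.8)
The strategy is to transport the whole minimization problem, via the isomorphism $\TG$ of \eqref{def:mappingTG}, to the Hilbert space $L^2(\Omega_0 \times G, \ell_2(\ZZdual))$, where $\Gamma$-invariant subspaces correspond to $\Gamma$-invariant range functions $\K$ by Theorem \ref{Th:Helson-2}, and then solve a fiberwise finite-dimensional best-approximation problem. First I would rewrite the functional $\E$. Since $\TG$ is an isometry and $\Proj_{S_\Gamma(\A)}$ corresponds to the projection onto $M_{\K}$ which acts fiberwise as $\pj_{\K(\omega,g)}$ (Lemma \ref{Lem:Gamma-range}), one gets
\begin{equation*}
\E[S_\Gamma(\A);\data] = \sum_{i=1}^m \int_{\Omega_0} \sum_{g \in G} \big\| \TG[f_i](\omega,g) - \pj_{\K(\omega,g)}\TG[f_i](\omega,g)\big\|_{\ell_2(\ZZdual)}^2 \, d\omega.
\end{equation*}
The key observation, using $\TG[f_i](\omega,g) = \T[f_i](g^*\omega) = \T[\rot{g}f_i](\omega)$ up to the $r_g$ action and the covariance \eqref{eq-inv} of $\K$, is that the inner sum over $g \in G$ at a fixed $\omega \in \Omega_0$ is exactly the squared distance in $\C^{m\card}$ (identified suitably inside $\ell_2(\ZZdual)^{\card}$ via $\T$) from the vector of fibers of $\DG = \{\rot{g}f_i\}$ to a single subspace $W(\omega) := \K(\omega,\id) \subset \C^{m\card}$ whose dimension is bounded by $\kappa\card$ — because the constraint $\len(\V)\le\kappa$ translates, through Theorem \ref{theo:preGramiancovariance} and the orbit structure, into $\dim \K(\omega,\id) \le \kappa\card$ a.e., and conversely the whole range function is determined by its value at $\id$ via \eqref{eq-inv}.

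\textbf{The fiberwise problem.} At this point the problem decouples: for a.e. $\omega \in \Omega_0$ we must choose a subspace $W(\omega)$ of $\C^{m\card}$ of dimension at most $\kappa\card$ minimizing the sum of squared distances from the columns of the pre-Gramian $\TT_{\DG}(\omega)$ (equivalently, minimizing $\mathrm{tr}\,\G_{\DG}(\omega) - \mathrm{tr}\,(\pj_{W(\omega)}\G_{\DG}(\omega)\pj_{W(\omega)})$). This is the classical Eckart–Young / SVD truncation statement: the optimal $W(\omega)$ is the span of eigenvectors of $\G(\omega) = \G_{\DG}(\omega)$ associated to its $\kappa\card$ largest eigenvalues, and the minimal value of the inner sum is $\sum_{(i,g)}$ over the discarded eigenvalues $\sigma_{i,g}(\omega)^2$. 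One then has to check measurability of the fiberwise minimizer — that a measurable selection of the top eigenvectors exists — which is exactly where Lemma \ref{lem:eigenG} enters: part 1 guarantees the ordering of eigenvalues is $G$-invariant so the ``discarded'' index set $\{i \ge \kappa+1\}$ is consistent across the orbit, and part 2 provides a measurable, $G$-covariant choice of eigenvectors $V^{i,g}(\omega)$. Integrating the fiberwise optimum over $\Omega_0$ and re-expanding the sum over $\Omega = \bigcup_g g^*\Omega_0$ using \eqref{eq:eigenvaluesinvariance} yields the error formula in part 2,
\begin{equation*}
\E[\W;\data] = \sum_{i=\kappa+1}^m \sum_{g \in G} \int_{\Omega_0} \sigma_{(i,g)}(\omega)^2 \, d\omega .
\end{equation*}

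\textbf{Constructing the generators.} To produce $\W$ itself as a $\Gamma$-invariant space and exhibit the Parseval frame, I would define $W(\omega) := \vsp\{V^{i,g}(\omega) : i \le \kappa, g \in G\}$ for $\omega \in \Omega_0$ and extend to a $\Gamma$-invariant range function $\K$ via \eqref{eq-inv}; by Theorem \ref{Th:Helson-2} this $\K$ corresponds to a $\Gamma$-invariant subspace $\W$, and one checks $\len(\W) \le \kappa$ by building explicit generators. The natural candidates are obtained by pulling back, through the correspondence $\TT_{\DG}(\omega) = $ (matrix of fibers $\T[\rot{g'}f_j](\omega)$), the vectors $\theta_{i,g}(\omega)V^{i,g}(\omega)$, normalized by the inverse singular values $\theta_{i,g} = \sigma_{i,g}^{-1}$ so that the resulting system is Parseval rather than merely a Riesz-type basis; this is precisely formulas \eqref{eq:generators}–\eqref{eq:coefficients}, with the cutoff $\one{g^*\Omega_0}(\omega)$ encoding which copy of $\Omega_0$ inside $\Omega$ the point $\omega$ lies in. One then verifies, using Corollary \ref{cor:Gramcovariance}, \eqref{eq:eigenvectorscovariance} and functional calculus exactly as in the proof of Theorem \ref{theo:Parseval}, that the pre-Gramian $Q$ of $\{\T^{-1}[\T[\psi_i]]\}$ satisfies the covariance \eqref{eq:preGramiancovarianceII} and that $Q(\omega)^*Q(\omega)$ is the orthogonal projection onto $W(\omega)$, so that $\{\tr{k}\rot{g}\psi_i\}$ is a Parseval frame of $\W$ by \cite[Lemma 5.5.4]{Chr2003}. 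Finally, optimality among \emph{all} $\Gamma$-invariant $\V$ with $\len(\V)\le\kappa$ follows because every such $\V$ gives, fiberwise, an admissible competitor subspace of dimension $\le \kappa\card$, so its error is pointwise $\ge$ the SVD optimum.

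\textbf{Main obstacle.} The routine parts are the SVD/Eckart–Young argument and the frame computation (both mirror \cite{ACHM2007} and Theorem \ref{theo:Parseval}). The genuinely delicate step is the \emph{measurable, $G$-equivariant} selection of eigenvectors in Lemma \ref{lem:eigenG}(2) and the verification that the dimension constraint $\len(\V)\le\kappa$ is equivalent fiberwise to $\dim\K(\omega,\id)\le\kappa\card$ for a.e. $\omega$ — i.e. that one cannot ``cheat'' by using fewer generators globally than the fiber dimension suggests. Handling eigenvalue crossings (where multiplicities jump on a positive-measure set) while keeping the choice both measurable and compatible with the group action is where the argument requires the most care; the lexicographic ordering on $I_m\times G$ and the covariance \eqref{eq:Gramiancovariance} are exactly the tools that make this work.
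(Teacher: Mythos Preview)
Your overall strategy --- reduce $\E$ to a fiberwise integral over $\Omega_0$, solve the fiberwise problem by Eckart--Young, build generators from the truncated SVD, and verify covariance and the Parseval property --- is exactly the paper's route (the paper phrases the reduction via $\T$ and $\J_\V$ through Proposition~\ref{prop:errors} rather than $\TG$ and $\K$, but that is only a change of language).

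There is, however, a genuine slip in your formulation of the fiberwise problem. You write that $W(\omega)=\K(\omega,\id)$ is a subspace of $\C^{m\card}$ and then set $W(\omega)=\vsp\{V^{i,g}(\omega)\}$, the span of the eigenvectors of $\G(\omega)$. But $\K(\omega,\id)$ (equivalently $\J_\V(\omega)$) is a closed subspace of $\ell_2(\ZZdual)$, and the vectors being projected are the fibers $a_{i,g}(\omega)=\T[\rot{g}f_i](\omega)\in\ell_2(\ZZdual)$, not vectors in $\C^{m\card}$. The Eckart--Young minimizer is the span of the \emph{left} singular vectors $u_{i,g}(\omega)=\theta_{i,g}(\omega)\,\TT_{\DG}(\omega)V^{i,g}(\omega)\in\ell_2(\ZZdual)$ of the pre-Gramian, not the right singular vectors $V^{i,g}(\omega)\in\C^{m\card}$. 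You do gesture at ``pulling back through $\TT_{\DG}$'', but as written the plan conflates the domain and codomain of $\TT_{\DG}(\omega)$; a subspace of $\C^{m\card}$ cannot be a range function. Once you correct this --- take $\J_\W(\omega)=\vsp\{u_{i,g}(\omega):i\le\kappa,\,g\in G\}$ on $\Omega_0$ and extend by the covariance \eqref{eq:rangecovariance} --- the rest of what you wrote goes through, and is what the paper does.

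One further remark: the ``main obstacle'' you flag --- that $\len(\V)\le\kappa$ be \emph{equivalent} to $\dim\J_\V(\omega)\le\kappa\card$ a.e.\ --- is not actually needed. Only the forward implication is used (it is immediate, since $\J_\V(\omega)$ is spanned by the $\kappa\card$ fibers $\T[\rot{g}\varphi_i](\omega)$), to show that every competitor is beaten pointwise by the SVD optimum. For the optimal $\W$ one simply exhibits the $\kappa$ generators $\psi_i$ explicitly, so $\len(\W)\le\kappa$ holds by construction and no converse is required. The worry about ``cheating'' therefore does not arise.
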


The proof of Theorem \ref{theo:main} relies on the minimization of a family of related functionals, which read as follows.

\begin{defi}\label{def:minimizeromega}
Let $\V \subset L^2(\RR)$ be a $\ZZ$-invariant subspace, $\data = \{f_1,\dots,f_m\} \subset L^2(\RR)$ and $\omega \in \wh{\RR}$. We define
$$
\mathcal D (\V; \data)(\omega) =
\sum_{i=1}^m \sum_{g \in G} \|\T[\rot{g}f_i](\omega) - \pj_{\J_\V(\omega)}\T[\rot{g}f_i](\omega)\|^2_{\ell_2(\ZZdual)} .
$$
For $\omega \in \wh{\RR}$, we say that a $\Gamma$-invariant subspace $\W \subset L^2(\RR)$ is a minimizer of length at most $\kappa$ of $\mathcal D (\V; \data)(\omega)$ if $\len(\W)\leq \kappa$, and
$$
\mathcal D (\W; \data)(\omega) \leq \mathcal D (\V; \data)(\omega) \quad \forall \ \V \ \Gamma\textnormal{-invariant} \, : \len(\V)\leq \kappa .
$$
\end{defi}

We have the following result relating $\E[\V; \data]$ and $\mathcal D (\V; \data).$

\begin{prop}\label{prop:errors}
Let $\V \subset L^2(\RR)$ be $\Gamma$-invariant subspace. Then it holds,
\begin{equation} \label{Pro:minimize}
\E[\V; \data] = \int_{\Omega_0} \mathcal D (\V; \data)(\omega) \, d\omega\,.
\end{equation}
Moreover, if $\W \subset L^2(\RR)$ is a minimizer of length at most $\kappa$ of $\mathcal D (\V; \data)(\omega)$ for almost all $\omega \in \Omega_0,$ then $\W$ is also a minimizer of $\E[\V; \data]$ over all $\V$ with $\len(\V) \leq \kappa.$        
\end{prop}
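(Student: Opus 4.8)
The plan is to prove the identity \eqref{Pro:minimize} by transporting the error functional through the isometric isomorphism $\T$ of Lemma \ref{lem:gram}, and then to deduce the ``moreover'' clause by integrating a pointwise inequality. Since $\T$ is unitary we have $\Proj_{\T(\V)} = \T\,\Proj_\V\,\T^{-1}$, hence $\|f_i - \Proj_\V f_i\|^2_{L^2(\RR)} = \|\T[f_i] - \Proj_{\T(\V)}\T[f_i]\|^2_{L^2(\Omega,\ell_2(\ZZdual))}$ for each $i$. As $\V$ is $\Lambda$-invariant, Theorem \ref{Th:Helson} gives $\T(\V) = M_{\J_\V}$, and by Lemma \ref{Lem:range} the projection $\Proj_{M_{\J_\V}}$ acts fiberwise as $F(\omega) \mapsto \pj_{\J_\V(\omega)}F(\omega)$. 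Writing the norm in $L^2(\Omega,\ell_2(\ZZdual))$ as an integral over $\Omega$ then yields
$$
\E[\V;\data] = \sum_{i=1}^m \int_\Omega \big\|\T[f_i](\omega) - \pj_{\J_\V(\omega)}\T[f_i](\omega)\big\|^2_{\ell_2(\ZZdual)}\, d\omega .
$$

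Next I would pull this integral back to $\Omega_0$. By \eqref{eq:Omegon} and \eqref{eq:disjoint}, $\Omega = \bigcup_{g\in G} g^*\Omega_0$ is an a.e. partition, so $\int_\Omega = \sum_{g\in G}\int_{g^*\Omega_0}$; the change of variables $\omega \mapsto g^*\omega$ preserves the Haar measure of $\wh\RR$ (and hence of $\Omega_0$) because $g\ZZ=\ZZ$. On $g^*\Omega_0$ I use two covariance relations: Lemma \ref{lem:intertwining} with $k=0$ gives $\T[f_i](g^*\omega) = r_{g^{-1}}\T[\rot{g}f_i](\omega)$, while the range-function covariance of Theorem \ref{theo:rangecovariance}, namely $\J_\V(g^*\omega) = r_{g^{-1}}\J_\V(\omega)$, upgrades to $\pj_{\J_\V(g^*\omega)} = r_{g^{-1}}\,\pj_{\J_\V(\omega)}\,r_g$. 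Substituting, the unitary $r_{g^{-1}}$ factors out of both $\T[f_i](g^*\omega)$ and $\pj_{\J_\V(g^*\omega)}\T[f_i](g^*\omega)$, so the norm is unchanged and the $g$-th term equals $\int_{\Omega_0}\|\T[\rot{g}f_i](\omega) - \pj_{\J_\V(\omega)}\T[\rot{g}f_i](\omega)\|^2 d\omega$. Summing over $g \in G$ and $i \in \{1,\dots,m\}$ and comparing with Definition \ref{def:minimizeromega} gives exactly $\E[\V;\data] = \int_{\Omega_0}\mathcal D(\V;\data)(\omega)\,d\omega$.

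The ``moreover'' part is then immediate: if $\W$ is $\Gamma$-invariant with $\len(\W)\le\kappa$ and satisfies $\mathcal D(\W;\data)(\omega)\le\mathcal D(\V;\data)(\omega)$ for a.e. $\omega\in\Omega_0$ and every $\Gamma$-invariant $\V$ with $\len(\V)\le\kappa$, then integrating over $\Omega_0$ and applying \eqref{Pro:minimize} to both $\W$ and $\V$ yields $\E[\W;\data]\le\E[\V;\data]$. The main obstacle is the bookkeeping in the second step: one must make sure that the three equivariances (the intertwining of $\rot{g}$ with $\T$, the covariance of the range function, and the $G$-invariance of the Haar measure on $\wh\RR$) combine so that the factors $r_g$, $r_{g^{-1}}$ cancel, and that the a.e. identities respect the a.e. partition $\Omega = \bigcup_{g}g^*\Omega_0$. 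Measurability of $\omega\mapsto\mathcal D(\V;\data)(\omega)$, needed for the integrals to make sense, follows from measurability of the field $\omega\mapsto\pj_{\J_\V(\omega)}$.
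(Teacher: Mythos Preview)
Your proof is correct and follows essentially the same approach as the paper: transport $\E$ through the isometry $\T$, use Lemma~\ref{Lem:range} to write the projection fiberwise, split $\Omega$ into the a.e.\ partition $\bigcup_{g\in G} g^*\Omega_0$, and then combine the intertwining of Lemma~\ref{lem:intertwining} with the range-function covariance of Theorem~\ref{theo:rangecovariance} so that the unitary $r_{g^{-1}}$ factors out and disappears. The ``moreover'' clause is handled identically, by integrating the pointwise inequality over $\Omega_0$.
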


\begin{proof}
To prove \eqref{Pro:minimize}, use that $\T$ is an isometry from $L^2(\RR)$ onto $L^2(\Omega,\ell_2(\ZZdual))$ (see Lemma \ref{lem:gram}) to write
$$
\E[\V;\data]  = \sum_{i=1}^m \|\T[f_i] - \T[\Proj_{\V}f_i]\|^2_{L^2(\Omega,\ell_2(\ZZdual))}\,.
$$
Furthermore it is easy to see that $\T \Proj_{\V} = \Proj_{\T[\V]}\T$, so we have
$$
\E[\V;\data] = \sum_{i=1}^m \|\T[f_i] - \Proj_{\T[\V]}\T[f_i]\|^2_{L^2(\Omega,\ell_2(\ZZdual))} \,.
$$
Since $\V$ is $\Gamma$-invariant, hence in particular it is $\Lambda$-invariant, by Theorem \ref{Th:Helson} we have $\T[\V] = M_{\J_\V}$. Hence,
$$
\E[\V;\data]  = \sum_{i=1}^m \|\T[f_i] - \Proj_{M_{\J_\V}}\T[f_i]\|^2_{L^2(\Omega,\ell_2(\ZZdual))}\,.
$$
By definition of the norm in $L^2(\Omega,\ell_2(\ZZdual))$, and using Lemma \ref{Lem:range}, we can write
\begin{align*}
\E[\V;\data] &
= \sum_{i=1}^m \int_{\Omega} \|\T[f_i](\omega) - (\Proj_{M_{\J_\V}}\T[f_i])(\omega)\|^2_{\ell_2(\ZZdual)}d\omega\\
& =  \sum_{i=1}^m \int_{\Omega} \|\T[f_i](\omega) - \pj_{\J_\V(\omega)}\T[f_i](\omega)\|^2_{\ell_2(\ZZdual)}d\omega\\
& =  \sum_{i=1}^m \sum_{g \in G} \int_{g^*\Omega_0} \|\T[f_i](\omega) - \pj_{\J_\V(\omega)}\T[f_i](\omega)\|^2_{\ell_2(\ZZdual)}d\omega \,.
\end{align*}
An obvious change of variables gives
\begin{align*}
\E[\V;\data] & =  \sum_{i=1}^m \sum_{g \in G} \int_{\Omega_0} \|\T[f_i](g^*\omega) - \pj_{\J_\V(g^*\omega)}\T[f_i](g^*\omega)\|^2_{\ell_2(\ZZdual)}d\omega\\
& = \int_{\Omega_0} \sum_{i=1}^m \sum_{g \in G} \|\T[f_i](g^*\omega) - \pj_{\J_\V(g^*\omega)}\T[f_i](g^*\omega)\|^2_{\ell_2(\ZZdual)}d\omega\,.
\end{align*}
By Lemma \ref{lem:intertwining} and Theorem \ref{theo:rangecovariance},
$$
\E[\V;\data] =   \int_{\Omega_0} \sum_{i=1}^m \sum_{g \in G} \|r_{g^{-1}}\T[\rot{g}f_i](\omega) - \pj_{r_{g^{-1}}\J_\V(\omega)}\,r_{g^{-1}}\T[\rot{g}f_i](\omega)\|^2_{\ell_2(\ZZdual)}d\omega \,.
$$
Now, using that $r_g$ is an isometry, which implies $\pj_{r_{g^{-1}}\J_\V(\omega)} = r_{g^{-1}}\pj_{\J_\V(\omega)} r_{g}$, we can conclude
\begin{align*}
\E[\V;\data] & = \int_{\Omega_0} \sum_{i=1}^m \sum_{g \in G} \|r_{g^{-1}}\T[\rot{g}f_i](\omega) - r_{g^{-1}}\pj_{\J_\V(\omega)}\T[\rot{g}f_i](\omega)\|^2_{\ell_2(\ZZdual)}d\omega \\
& = \int_{\Omega_0} \mathcal D (\V; \data)(\omega)\, d\omega .
\end{align*}
This proves \eqref{Pro:minimize}.

To conclude the proof of the proposition, suppose that $\W$ is a minimizer of $\mathcal D (\V;\data)(\omega)$ of lenght at most $\kappa$ for almost all $\omega \in \Omega_0.$ Then, for any $\V \subset L^2(\RR)$ with $\len(\V) \leq \kappa$, we have
\[
\E[\W;\data] = \int_{\Omega_0} \mathcal D (\W; \data)(\omega)\, d\omega \leq \int_{\Omega_0} \mathcal D (\V; \data)(\omega)\, d\omega = \E[\V;\data]. \qedhere
\]
\end{proof}

\subsection*{Proof of Theorem \ref{theo:main}}
The proof relies on similar arguments to the ones provided in \cite{ACHM2007}.

Let us first prove $1.$
Let $\omega \in \Omega$, $I_m=\{1,\dots,m\}$, and $F(\omega)$ be the pre-Gramian of the system $\DG$, evaluated in $\omega$, that is,
$$
F(\omega) c = \sum_{(i,g) \in I_m\times G} c_{i,g} \,a_{i,g}(\omega), \quad c \in \C^{m \card}
$$
where
$$
a_{i,g}(\omega) = \T[\rot{g}f_i](\omega) \in \ell_2(\ZZdual) \quad i \in I_m ,\; g \in G .
$$

For each $\omega \in \Omega_0$, consider $\G(\omega) = F(\omega)^*F(\omega)$ the Gramian of $\DG$ at $\omega$, and let $\{\sigma_{(i,g)}(\omega)^2 , (i,g) \in I_m \times G\}$ be its eigenvalues, ordered decreasingly with the lexicographical ordering of $I_m \times G$. Then $\G(\omega) = V(\omega) \Sigma(\omega)^2 V(\omega)^*$ where $V(\omega)$ is the square matrix of order $m\card$ whose columns are the orthonormal eigenvectors $\{V^{i,g}(\omega) : (i,g) \in I_m \times G\} \subset \C^{m\card}$ of $\G(\omega)$, and $\Sigma(\omega) = \textnormal{diag}(\sigma_{i,g}(\omega))$. 
Define
$$
U(\omega) = F(\omega) V(\omega) \Sigma(\omega)^+ ,
$$
where $\Sigma(\omega)^+ = \textnormal{diag}(\theta_{i,g}(\omega))$, and $\theta_{i,g}(\omega) = (\sigma_{i,g}(\omega))^{-1}$ when $\sigma_{i,g}(\omega) \neq 0$, and $0$ otherwise. Then
\begin{equation}\label{eq:UParseval}
U(\omega)^*U(\omega) = \Sigma(\omega)^+ V(\omega)^* F(\omega)^* F(\omega) V(\omega) \Sigma(\omega)^+ = \Sigma(\omega)^+ \Sigma(\omega)^2 \Sigma(\omega)^+
\end{equation}
is a diagonal matrix with $1$ on the first $r$ entries, and $0$ on the other ones, where $r = \textnormal{rank}(F(\omega))$.

Let $u_{i,g}(\omega)$ be the $(i,g)$-th column of $U(\omega)$, and for $\kappa \in I_m$ and $\omega \in \Omega_0$ denote by $\wt{U}(\omega)$ the matrix whose columns are given by the family $\{u_{i,g}(\omega)\,:\,i=1,\dots,\kappa, g \in G\} \subset \ell_2(\ZZdual)$. By (\ref{eq:Omegon}) and (\ref{eq:disjoint}), we can extend $\wt{U}$ to the whole $\Omega$ by defining
\begin{equation}\label{eq:extensionH}
H(g^*\omega) = r_{g^{-1}} \wt{U}(\omega) \lambda_g  \ , \quad \omega \in \Omega_0 , \ g \in G.
\end{equation}

Let $\{h_{i.g}(\omega) \, , \ (i,g) \in I_\kappa \times G\}$ be the columns of $H(\omega)$. Denoting by $\{\delta_{i,g}\, , \ (i,g) \in I_\kappa \times G\} \subset \C^{\kappa\card}$ the canonical basis of $\C^{\kappa\card}$, we have
$$
h_{i.g'}(g^*\omega) = H(g^*\omega)\delta_{i,g'} = r_{g^{-1}} \wt{U}(\omega) \lambda_g \delta_{i,g'} = r_{g^{-1}} \wt{U}(\omega) \delta_{i,gg'} = r_{g^{-1}} u_{i,gg'}(\omega)
$$
for $\omega \in \Omega_0$. Since, for a.e. $\omega \in \Omega$, there exists a unique $g \in G$ and a unique $\omega_0 \in \Omega_0$ such that $\omega = g^*\omega_0$, we have
$$
h_{i.g'}(\omega) = h_{i.g'}(g^*\omega_0) = r_{g^{-1}} u_{i,gg'}((g^{-1})^*\omega).
$$
Therefore we can then write, for any $\omega \in \Omega$
\begin{equation}\label{eq:extensionh}
h_{i,g'} (\omega) = \sum_{g \in G} r_{g^{-1}} u_{i,gg'}((g^{-1})^*\omega) \one{g^*\Omega_0}(\omega) .
\end{equation}
Using (\ref{eq:UParseval}), we can see that $h_{i,g} \in L^2(\Omega,\ell_2(\ZZdual))$ for all $(i,g) \in I_\kappa \times G$, because
\begin{align*}
\int_\Omega & \|h_{i,g'} (\omega)\|_{\ell_2(\ZZdual)}^2 d\omega = \sum_{g \in G} \int_{g^*\Omega_0} \|r_{g^{-1}} u_{i,gg'}((g^{-1})^*\omega)\|_{\ell_2(\ZZdual)}^2 d\omega\\
& = \sum_{g \in G} \int_{\Omega_0} \|u_{i,g}(\omega)\|_{\ell_2(\ZZdual)}^2 d\omega = \sum_{g \in G} \int_{\Omega_0} \|U(\omega) \delta_{i,g}\|_{\ell_2(\ZZdual)}^2 d\omega\\
& = \sum_{g \in G} \int_{\Omega_0} \langle U(\omega)^*U(\omega) \delta_{i,g}, \delta_{i,g}\rangle_{\ell_2(\ZZdual)} d\omega \leq \card|\Omega_0| = |\Omega|.
\end{align*}

Thus, $H$ is the pre-Gramian of the family $\{\psi_{i,g} \, : \, (i,g) \in I_\kappa \times G\} \subset L^2(\RR)$ defined by
\begin{equation}\label{eq:rotatedgen}
\psi_{i,g} := \T^{-1}[h_{i,g}].
\end{equation}
Moreover, for a.e. $\omega \in \Omega$, let $g_0 \in G$ and $\omega_0 \in \Omega_0$ be the unique elements satisfying $\omega = g_0^*\omega_0$. Then, for all $g \in G$, by (\ref{eq:extensionH}) we have
\begin{align*}
H(g^*\omega) & = H((g_0g)^*\omega_0) = r_{(g_0g)^{-1}} \wt{U}(\omega_0) \lambda_{(g_0g)} = r_{g^{-1}} r_{g_0^{-1}}\wt{U}(\omega_0) \lambda_{g_0}\lambda_g\\
& = r_{g^{-1}} H(g_0^*\omega_0) \lambda_g = r_{g^{-1}} H(\omega) \lambda_g .
\end{align*}
By the second part of Theorem \ref{theo:preGramiancovariance}, this implies that for all $(i,g) \in I_\kappa \times G$ we have
\begin{equation}\label{eq:proofgenerators}
\psi_{i,g} = \rot{g}\psi_{i,\id}.
\end{equation}
We now show that the family $\{\psi_i = \psi_{i,\id}\}_{i = 1}^\kappa$ generates, under the action of $\Gamma$, a minimizing subspace. Let
$$
\W = \ol{\vsp}\{\tr{k}\rot{g}\psi_i \, : \ i \in I_\kappa, k \in \ZZ, g \in G\} \subset L^2(\RR).
$$
Clearly $\W$ is $\Gamma$-invariant, and $\len(\W) \leq \kappa$. By Theorem \ref{Th:Helson}, and using (\ref{eq:rotatedgen}) and (\ref{eq:proofgenerators}), the range function of $\W$ (as a $\ZZ$-invariant space) is
$$
\J_\W(\omega) = \vsp\{\T[\rot{g}\psi_i](\omega) : (i,g) \in I_\kappa \times G\} = \vsp\{h_{i,g}(\omega) : (i,g) \in I_\kappa \times G\}
$$
for a.e. $\omega \in \Omega$. In particular,
$$
\J_\W(\omega) = \vsp\{u_{i,g}(\omega) : (i,g) \in I_\kappa \times G\} \quad \textrm{a.e. } \omega \in \Omega_0 .
$$

Now, for a.e. $\omega \in \Omega_0$, by applying the Schmidt-Eckart-Young theorem \cite{Sch1907, EY1936} to the data $\{a_{i,g}(\omega) \, , \ (i,g) \in I_m \times G\}$ (see also \cite[Chapt. VI.1, Theorem 1.5]{GGK1990} and \cite[Theorem 4.1]{ACHM2007}), we get that the subspace $\J_\W(\omega) \subset \ell_2(\ZZdual)$ is a minimizer of length at most $\kappa$ of $\mathcal D (\V; \data)(\omega)$ in the sense of Definition \ref{def:minimizeromega}. Thus, by Proposition \ref{prop:errors}, we have that $\W$ is a minimizer for $\E[\V;\data]$ over all $\Gamma$-invariant subspaces $\V \subset L^2(\RR)$ with $\len(\V) \leq \kappa$.

It remains to show that $\{\tr{k}\rot{g}\psi_i \, : \ i \in I_\kappa, k \in \ZZ, g \in G\}$ is a Parseval frame of $\W$. By (\ref{eq:UParseval}) together with \cite[Lemma 5.5.4]{Chr2003} (see also \cite[Corollary 7]{BHP2018}), the system $\{u_{i,g}(\omega) : (i,g) \in I_\kappa \times G\}$ is a Parseval frame of $\J_\W(\omega)$ for a.e. $\omega \in \Omega_0$. We now prove that $\{h_{i,g}(\omega) : (i,g) \in I_\kappa \times G\}$ is a Parseval frame of $\J_\W(\omega)$ for a.e. $\omega \in \Omega$. Let $\omega_0 \in \Omega_0$ and $g_0 \in G$ be the unique elements satisfying $\omega = g_0^*\omega_0$, and let $c \in \J_\W(\omega)$. Using (\ref{eq:extensionh})
\begin{align*}
\sum_{i = 1}^\kappa \sum_{g \in G} |\langle h_{i,g}(\omega),c\rangle_{\ell_2(\ZZdual)}|^2 & = \sum_{i = 1}^\kappa \sum_{g \in G} |\langle r_{g_0^{-1}}u_{i,g_0g}(\omega_0),c\rangle_{\ell_2(\ZZdual)}|^2\\
& = \sum_{i = 1}^\kappa \sum_{g \in G} |\langle u_{i,g_0g}(\omega_0),r_{g_0}c\rangle_{\ell_2(\ZZdual)}|^2 .
\end{align*}
By Theorem \ref{theo:rangecovariance}, since $c \in \J_\W(\omega) = \J_\W(g_0^*\omega_0)$, then $r_{g_0}c \in \J_\W(\omega_0)$. Since $\{u_{i,g}(\omega_0) : (i,g) \in I_\kappa \times G\}$ is a Parseval frame of $\J_\W(\omega_0)$
$$
\sum_{i = 1}^\kappa \sum_{g \in G} |\langle h_{i,g}(\omega),c\rangle_{\ell_2(\ZZdual)}|^2 = \|r_{g_0}c\|^2_{\ell_2(\ZZdual)} = \|c\|^2_{\ell_2(\ZZdual)} .
$$
By \cite[Theorem 4.1]{CP2010}, we then have that $\{\tr{k}\rot{g}\psi_i \, : \ i \in I_\kappa, k \in \ZZ, g \in G\}$ is a Parseval frame of $\W$ as wanted. This concludes the proof of $1.$

To prove the formula for the approximation error given in $2.$ recall that, by the Schmidt-Eckart-Young theorem for the data $\{a_{i,g}(\omega) \, , \ (i,g) \in I_m \times G\}$, with $\omega \in \Omega_0$ (see \cite[Theorem 4.1]{ACHM2007} for a version adapted to our situation)
$$
\mathcal D (\W; \data)(\omega) = \sum_{i = \kappa+1}^{m}\sum_{g \in G} \sigma_{i,g}(\omega)^2.
$$
Thus, 2. is a consequence of (\ref{Pro:minimize}) in Proposition \ref{prop:errors}.

To prove 3., we need to compute $\T[\psi_i] = h_{i,\id}$ explicitly. From the definition of $U(\omega)$, that is $U(\omega) = F(\omega)V(\omega)\Sigma^+(\omega)$, a direct computation shows that the $(i,g)$-th column of $U(\omega)$ is
\begin{equation}\label{eq:columns}
u_{i,g}(\omega) = \theta_{i,g}(\omega) \sum_{(j,q) \in I_m\times G} V_{j,q}^{i,g}(\omega) \, a_{j,q}(\omega) \in \ell_2(\ZZdual) \quad \textnormal{a.e. } \omega \in \Omega_0 .
\end{equation}
Using now (\ref{eq:extensionh}) we obtain
\begin{align*}
h_{i,\id}(\omega) & = \sum_{g \in G} r_{g^{-1}} u_{i,g}((g^{-1})^*\omega) \one{g^*\Omega_0}(\omega)\\
& = \sum_{g \in G} r_{g^{-1}} \theta_{i,g}((g^{-1})^*\omega) \sum_{(j,q) \in I_m\times G} V_{j,q}^{i,g} ((g^{-1})^*\omega)\, a_{j,q}((g^{-1})^*\omega) \one{g^*\Omega_0}(\omega)\\
& = \sum_{g \in G} \theta_{i,g}(\omega) \sum_{(j,q) \in I_m\times G} V_{j,g^{-1}q}^{i,g} (\omega)\, r_{g^{-1}} a_{j,q}((g^{-1})^*\omega) \one{g^*\Omega_0}(\omega)
\end{align*}
where the last identity is due to (\ref{eq:eigenvaluesinvariance}) and (\ref{eq:eigenvectorscovariance}).

By Lemma \ref{lem:intertwining} and the definition of $a_{i,g}$
$$
r_{g^{-1}} a_{j,q}((g^{-1})^*\omega) = r_{g^{-1}} \T[\rot{q}f_j]((g^{-1})^*\omega) = \T[\rot{g^{-1}q}f_j](\omega) .
$$
Therefore
\begin{align*}
h_{i,\id}(\omega) & = \sum_{g,q \in G} \sum_{j=1}^m \theta_{i,g}(\omega) V_{j,g^{-1}q}^{i,g} (\omega)\, a_{j,g^{-1}q}(\omega) \one{g^*\Omega_0}(\omega)\\
& \stackrel{g^{-1}q=g'}{=} \sum_{(j,g') \in I_m\times G} a_{j,g'}(\omega) \sum_{g \in G} \theta_{i,g}(\omega) V_{j,g'}^{i,g} (\omega) \one{g^*\Omega_0}(\omega).
\end{align*}

In terms of the coefficients (\ref{eq:coefficients}) we obtain
$$
\T[\psi_i](\omega) = \sum_{(j,g') \in I_m\times G} C_i^{j,g'}(\omega) \T[\rot{g'}f_j](\omega)
$$
which is (\ref{eq:generators}). This concludes the proof. \hfill \qed

\begin{rem}
Note that, by (\ref{eq:generators}) and Theorem \ref{Th:Helson}, we have that each $\psi_i$ belongs to $S_\Gamma(\data)$, so in particular $\W \subset S_\Gamma(\data)$.
\end{rem}

\noindent
{\bf{Acknowledgements.}}
We thank the anonymous referees for several comments and suggestions that have improved the presentation of this paper.

\end{document}